\newtheorem{thm}{Theorem}[section]
\newtheorem{dfn}[thm]{Definition}
\newtheorem*{dfn*}{Definition}
\newtheorem{lem}[thm]{Lemma}
\newtheorem{cor}[thm]{Corollary}
\newtheorem{prop}[thm]{Proposition}
\DeclareMathOperator{\Bin}{Bin}
\newcommand{\simple}{$\mathcal{T}_{cliq}$-forcing}
\newcommand{\esimple}{$\mathcal{T}_{bip}$-forcing}
\newcommand{\floor}[1]{
    \left \lfloor #1 \right \rfloor
}
\title{Tournament quasirandomness from local counting}
\author{Matija Buci\'c\thanks{Department of Mathematics, ETH, 8092 Z\"urich, Switzerland. Email: \texttt{matija.bucic@math.ethz.ch}}
\and Eoin Long\thanks{School of Mathematics, University of Birmingham, Birmingham, UK. Email: \texttt{e.long@bham.ac.uk}}
\and Asaf Shapira\thanks{School of Mathematics, Tel Aviv University, Tel Aviv 69978, Israel. Email: \texttt{asafico@tau.ac.il}. \newline \hspace*{1.5em} Supported in part by ISF Grant 1028/16 and ERC Starting Grant 633509.}
\and Benny Sudakov\thanks{Department of Mathematics, ETH, 8092 Z\"urich, Switzerland. Email: \texttt{benjamin.sudakov@math.ethz.ch}.\newline \hspace*{1.5em} Research supported in part by SNSF grant 200021-175573.}}
\date{}
\begin{document}

\maketitle

\abstract{
A well-known theorem of Chung and Graham states that if $h\geq 4$ then a tournament $T$ is quasirandom if and only if $T$ contains each $h$-vertex tournament the `correct number' of times as a subtournament. In this paper we investigate the relationship between quasirandomness of $T$ and the count of a \emph{single} $h$-vertex tournament $H$ in $T$. We consider two types of counts, the global one and the local one.\vspace{1mm}

We first observe that if $T$ has the correct {\em global} count of $H$ and $h \geq 7$ then quasirandomness of $T$ is only forced if $H$ is transitive. The next natural question when studying quasirandom objects asks whether possessing the correct {\em local} counts of $H$ is enough to force quasirandomness of $T$. A tournament $H$ is said to be locally forcing if it has this property.\vspace{1mm}

Variants of the local forcing problem have been studied before in both the graph and hypergraph settings. Perhaps the closest analogue of our problem was considered by Simonovits and S\'os who looked at whether having `correct counts' of a fixed graph $H$ as an induced subgraph of $G$ implies $G$ must be quasirandom, in an appropriate sense. They proved that this is indeed the case when $H$ is regular and conjectured that it holds for all $H$ (except the path on 3 vertices).
Contrary to the Simonovits-S\'os conjecture, in the tournament setting we prove that a constant proportion of all tournaments are not locally forcing. In fact, any locally forcing tournament must itself be strongly quasirandom. On the other hand, unlike the global forcing case, we construct infinite families of non-transitive locally forcing tournaments.}

\section{Introduction}

A combinatorial structure is said to be `quasirandom' if it behaves in a similar manner to a random structure, where the comparison is made with respect to some deterministic property. The systematic study of quasirandomness was initiated by Thomason \cite{Th1} \cite{Th2}, and Chung, Graham and Wilson \cite{CGW}, who examined notions of quasirandomness arising from various graph properties. One of the surprising conclusions of these papers is that a wide-range of natural graph properties all lead to essentially the same notion of quasirandomness, in the sense that a graph satisfying one of the properties necessarily satisfies them all. Since then, notions of quasirandomness have been extensively studied in a wide variety of contexts, including hypergraphs \cite{CGsetsystems}, \cite{GowHR}, \cite{NRS}, \cite{RS}, permutations \cite{Cooper}, \cite{KP} and groups \cite{Gow}. The reader is referred to the survey \cite{KS} for an overview of this extensive topic. \vspace{1mm}

In this paper we will study notions of quasirandomness for tournaments. The first paper on this topic is due to Chung and Graham \cite{CG} who proved that, as in the graph case, a wide range of natural tournament properties give rise to the same notion of quasirandomness. Before stating some of their results we require a little notation. 
A tournament $T = (V,E)$ consists of a set of vertices $V  = V(T)$, together with a set of edges $E  = E(T) \subset V \times V$, with the property that (i) $(u,u) \notin E$ for all $u\in V$ and (ii) exactly one of $(u,v)$ or $(v,u)$ lies in $E$ for all distinct $u,v \in V$. We often write $\overrightarrow {uv}$ to denote an edge $(u,v)$. A tournament $H$ appears as a subtournament of $T$ if there is a map $\phi: V(H) \to V(T)$ such that $\overrightarrow {uv} \in E(H)$ if and only if $\overrightarrow {\phi (u)\phi (v)} \in E(T)$. The map $\phi $ is said to be a labelled embedding of $H$ into $T$. Let 
	\begin{equation*}
		N^*_T(H)  
			= 
		\big  | \big  \{ \phi : V(H) \to V(T): 
		\phi \mbox{ is a labelled embedding of } H \mbox { into } T \big  \} \big |.
	\end{equation*} 
Given $U \subset V(T)$ let $T[U]$ denote the subtournament of $T$ induced by the vertex set $U$. Also let $N^*_T(H;U) := N^*_{T[U]}(H)$. For $u \in V(T)$ let $d^+_T(u) = |\{v\in V(T): \overrightarrow{uv} \in E(T)\}|$ and $d^-_T(u) = |\{v\in V(T): \overrightarrow{vu} \in E(T)\}|$. A tournament $T$ is regular if $d^+_T(u) = d^-_T(u)$ for all $u\in V(T)$. For $U,W \subseteq V(T)$ we denote by $e(U,W)$ the number of edges starting in $U$ and ending in $W$.

We say that $T$ is an $n$-vertex tournament if $|V(T)| = n$. An ordering of $T$ is a bijective map $\sigma : V \to [n]$ and the set of all orderings of $T$ is naturally identified with $S_n$, the symmetric group on $n$ elements. An edge $\overrightarrow {uv} \in E$ is a $\sigma $-forward edge if $\sigma (u) < \sigma (v)$ and we write $F_{\sigma ,T} \subset E$ to denote the set of $\sigma $-forward edges of $T$. Let $B_{\sigma , T} = E \setminus F_{\sigma ,T}$, the  set of $\sigma $-backward edges. $T$ is said to be transitive if $F_{\sigma ,T} = E$ for some $\sigma \in S_n$ and write $Tr_n$ to denote the unique (up to isomorphism) $n$-vertex transitive tournament.  Lastly, we will write $a \pm b$ to denote some value $c$ with $a-b \leq c \leq a+b$.\vspace{1mm}

Our starting point in this paper is a result due to Chung and Graham \cite{CG}, which gives two equivalences of tournament quasirandomness. 

\begin{thm}[Chung--Graham]
	\label{thm: chung-graham}
	Let $h \in {\mathbb N}$ with $h\geq 4$. 
	Then for any $n$-vertex tournament $T$, the following 
	properties are equivalent:
\begin{itemize}
	\item ${\cal P}_1$: $|F_{\sigma ,T}| = 
	\frac {1}{2} \binom {n}{2} \pm  o(n^2)$ for every ordering 
	$\sigma $ of $T$.
	\item ${\cal P}_2(h)$: $N^*_T(H) = 2^{-\binom {h}{2}}n^h \pm 
	o(n^h)$ for every $h$-vertex tournament $H$.
\end{itemize}
\end{thm}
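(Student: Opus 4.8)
The plan is to route the equivalence through an intermediate \emph{discrepancy} property
\[
	\mathcal{P}_{\mathrm{disc}}:\qquad e(U,W)=\tfrac12|U|\,|W|\pm o(n^2)\ \text{ for all }U,W\subseteq V(T),
\]
and prove $\mathcal{P}_1\Leftrightarrow\mathcal{P}_{\mathrm{disc}}\Leftrightarrow\mathcal{P}_2(h)$. First I would record the harmless fact that a labelled embedding of a tournament on at least two vertices is automatically injective (if $\phi(u)=\phi(v)$ then neither orientation of $uv$ can be mapped to an edge of $T$), so $N^*_T(H)$ counts injective copies and no degenerate terms will intrude. Throughout, write $N^+_T(v)$ and $N^-_T(v)$ for the out- and in-neighbourhoods of $v$.

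For $\mathcal{P}_1\Rightarrow\mathcal{P}_{\mathrm{disc}}$ it suffices to handle disjoint $U,W$, the general case following by decomposing $U$ and $W$ along $U\cap W$ and using $e(A,A)=\binom{|A|}{2}$. Pick any ordering $\sigma$ in which $U$ is an initial block, $W$ the next block, and the rest follow, and let $\sigma'$ swap the $U$- and $W$-blocks while fixing all internal orders; only edges between $U$ and $W$ change status, so $|F_{\sigma,T}|-|F_{\sigma',T}|=e(U,W)-e(W,U)=2e(U,W)-|U|\,|W|$, which is $o(n^2)$ by $\mathcal{P}_1$. For the converse, given $\sigma$ and a large constant $k$, cut $[n]$ into $k$ consecutive intervals with $\sigma$-preimages $V_1,\dots,V_k$: edges inside the blocks total $O(n^2/k)$, and between $V_a,V_b$ with $a<b$ the forward edges number $e(V_a,V_b)=\tfrac12|V_a|\,|V_b|\pm o(n^2)$ by $\mathcal{P}_{\mathrm{disc}}$, so summing over the $\binom k2$ pairs gives $|F_{\sigma,T}|=\tfrac12\binom n2\pm O(n^2/k)\pm o(n^2)$; letting $n\to\infty$ and then $k\to\infty$ gives $\mathcal{P}_1$.

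For the counting lemma $\mathcal{P}_{\mathrm{disc}}\Rightarrow\mathcal{P}_2(h)$, I would first upgrade $\mathcal{P}_{\mathrm{disc}}$ to a codegree statement: for every $A\subseteq V(T)$, all but $o(n)$ vertices $v$ satisfy $|N^+_T(v)\cap A|=\tfrac12|A|\pm o(n)$ (and likewise for $N^-_T$), since applying $\mathcal{P}_{\mathrm{disc}}$ to $A$ and the set of vertices with large positive, resp.\ negative, deviation bounds the total deviation by $o(n^2)$, after which Markov's inequality finishes, with the error uniform in $A$. Iterating over a bounded number of vertices then shows that for each fixed $i$ and sign pattern $\epsilon\in\{+,-\}^i$, all but $o(n^i)$ tuples of distinct vertices $(v_1,\dots,v_i)$ have $\bigl|\bigcap_{j\le i}N^{\epsilon_j}_T(v_j)\bigr|=2^{-i}n\pm o(n)$ (the accumulated error stays bounded, as each step roughly halves it). Finally I would count embeddings of $H$ (with vertex set $[h]$) by revealing $\phi(1),\dots,\phi(h)$ in turn; a standard induction shows almost every valid embedding of $H[\{1,\dots,k\}]$ is ``typical'' — all candidate sets encountered so far have the predicted size — and almost all of its extensions are again typical and valid, giving $N^*_T(H)=\bigl(\prod_{i=0}^{h-1}2^{-i}n\bigr)(1+o(1))=2^{-\binom h2}n^h\pm o(n^h)$.

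The delicate direction is $\mathcal{P}_2(h)\Rightarrow\mathcal{P}_{\mathrm{disc}}$, which I expect to be the main obstacle. I would first reduce to $h=4$: summing $N^*_T(H)$ over all $h$-vertex tournaments $H$ restricting to a fixed tournament on a fixed $4$-set, and dividing by the $(n-4)^{\underline{h-4}}$ placements of the extra vertices, yields $\mathcal{P}_2(h)\Rightarrow\mathcal{P}_2(4)$, and the same step gives $\mathcal{P}_2(4)\Rightarrow\mathcal{P}_2(3)$. Expanding $\sum_u d^+_T(u)^2$ and $\sum_{u\ne v}|N^{\epsilon_1}_T(u)\cap N^{\epsilon_2}_T(v)|^2$ as sums over $3$- and $4$-element vertex subsets and evaluating the resulting subtournament counts by $\mathcal{P}_2(3)$ and $\mathcal{P}_2(4)$ then yields the second-moment estimates
\[
	\sum_{u}\Bigl(d^+_T(u)-\tfrac n2\Bigr)^{2}=o(n^3),\qquad
	\sum_{u\ne v}\Bigl(|N^{\epsilon_1}_T(u)\cap N^{\epsilon_2}_T(v)|-\tfrac n4\Bigr)^{2}=o(n^4)
\]
for all $\epsilon_1,\epsilon_2\in\{+,-\}$. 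For disjoint $U,W$ I would then write $e(U,W)-\tfrac12|U|\,|W|=\sum_{w\in W}\bigl(|N^-_T(w)\cap U|-\tfrac12|U|\bigr)$, bound this by $\sqrt n\,\bigl(\sum_{w\in V(T)}(|N^-_T(w)\cap U|-\tfrac12|U|)^2\bigr)^{1/2}$ via Cauchy--Schwarz, and expand the inner sum into the moments above to conclude it is $\sqrt n\cdot(o(n^3))^{1/2}=o(n^2)$, which is $\mathcal{P}_{\mathrm{disc}}$. The crux — and the reason the hypothesis is $h\ge4$ — is that one must manufacture a cut-type pseudorandomness condition out of a single numerical subtournament count: $3$-vertex counts (transitive versus cyclic triangles) are too coarse to force $\mathcal{P}_1$, whereas the $4$-vertex counts can be recombined to pin down the codegree variances, and the bookkeeping that identifies these combinations and checks that the constants make the variances vanish is the technical heart of the argument.
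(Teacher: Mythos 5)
This theorem is quoted by the paper from Chung and Graham \cite{CG} and is not proved in the paper at all, so there is no in-paper argument to compare against; judged on its own terms, your proposal is a correct and essentially complete proof. Your route (introducing the discrepancy property $\mathcal{P}_{\mathrm{disc}}$ as a pivot, deriving the counting lemma from iterated codegree regularity, and recovering discrepancy from $\mathcal{P}_2(h)$ via the second moments of degrees and codegrees extracted from $3$- and $4$-vertex subtournament counts) is a streamlined version of the original Chung--Graham argument, which instead cycles through about eleven equivalent properties; their property $P_4$ (the codegree/deviation condition) plays exactly the role of your estimate $\sum_{u\ne v}\bigl(|N^{+}_T(u)\cap N^{+}_T(v)|-\tfrac n4\bigr)^2=o(n^4)$. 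All the individual steps check out: the block-swap argument for $\mathcal{P}_1\Rightarrow\mathcal{P}_{\mathrm{disc}}$, the interval decomposition for the converse, the reduction $\mathcal{P}_2(h)\Rightarrow\mathcal{P}_2(4)\Rightarrow\mathcal{P}_2(3)$ by summing over extensions (the identity $\sum_{H\supseteq H_0}N^*_T(H)=N^*_T(H_0)\,(n-4)_{h-4}$ is exactly right, since embeddings are injective as you observe), and the cancellation $\tfrac1{16}-\tfrac18+\tfrac1{16}=0$ in the codegree variance. The only points that would need care in a fully quantitative write-up are the uniformity of the $o(\cdot)$ error over the set $A$ in the codegree upgrade and the bookkeeping of accumulated errors in the $h$-step embedding induction, both of which you flag and both of which are routine. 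Your closing remark correctly identifies why $h\ge4$ is needed: $\mathcal{P}_2(3)$ only controls the degree sequence, not the codegrees.
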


In fact, there are nine further equivalences given in this paper (see also \cite{Griffiths}, \cite{KSh}). Equivalence here is understood in the following sense, focusing on the implication ${\cal P}_1 \implies {\cal P}_2(h)$: given $\varepsilon >0$ there is $\delta >0$ and $n_0 \in {\mathbb N}$ so that if $T$ is an $n$-vertex tournament which satisfies $|F_{\sigma ,T}| = \frac {1}{2} \binom {n}{2} \pm \delta n^2$ for every ordering $\sigma $ of $T$ and $n\geq n_0$ then $N^*_T(H) = 2^{-\binom {h}{2}}n^h \pm \varepsilon n^h$ for every  $h$-vertex tournament $H$. \vspace{1mm} 

It is easily seen that both properties ${\cal P}_1$ and ${\cal P}_2(h)$ hold for a random $n$-vertex tournament $T$ with high probability. We will say $T$ is \emph{quasirandom} if it satisfies ${\cal P}_1$, that is $|F_{\sigma ,T}| = \frac {1}{2}\binom {n}{2} \pm o(n^2)$. In light of Theorem \ref{thm: chung-graham}, this notion is equivalent to the analogous notion arising from ${\cal P}_2(h)$. \vspace{1mm}

\subsection{Globally forcing tournaments}
Although ${\cal P}_2(h)$ guarantees that $T$ is quasirandom, it is natural to ask whether this can already be deduced from the count of a \emph{single} tournament $H$. That is, does quasirandomness of $T$ already follow if $N^*_T(H) = 2^{-\binom {h}{2}}n^h \pm o(n^h)$ for a {single} $h$-vertex tournament $H$? 

\begin{dfn*} 
	Let $H$ be an $h$-vertex tournament. Given an $n$-vertex tournament $T$, consider 
	the following property: 
	\begin{itemize}
		\item ${\cal P}_2(H)$: $N^*_T(H) = 2^{-\binom {h}{2}}n^h \pm o(n^h)$.
	\end{itemize} 
	The tournament $H$ is said to be \emph{globally forcing} if ${\cal P}_2(H) \implies 
	{\cal P}_1$. 
\end{dfn*}

It follows easily from exercise 10.44(b) of \cite{lovasz-book} that for $h\geq 4$ each transitive tournament $Tr_h$ is globally forcing. This statement was recently reproved in the language of flag-algebras by Coregliano and Razborov \cite{CR}. Our first observation is that for $h \geq 7$, the transitive $h$-vertex tournament is the \textit{only} tournament with this property.

\begin{prop}
	\label{prop: forcing iff transitive}
	Let $H$ be an $h$-vertex tournament 	with $h\geq 7$. Then $H$ is globally forcing 
	if and only if $H$ is transitive.
\end{prop}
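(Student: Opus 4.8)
The backward implication is supplied by the text (each $Tr_h$ with $h\ge 4$ is globally forcing, via Exercise 10.44(b) of~\cite{lovasz-book} or~\cite{CR}), so the task is the forward one: given a non-transitive $H$ on $h\ge 7$ vertices, I must exhibit $n$-vertex tournaments $T_n$ satisfying ${\cal P}_2(H)$ but not ${\cal P}_1$. I would work with the associated limit objects: a \emph{tournament limit} is a measurable $W\colon[0,1]^2\to[0,1]$ with $W(x,y)+W(y,x)=1$, and for such $W$ I write $t(H,W)=\int_{[0,1]^{V(H)}}\prod_{\overrightarrow{uv}\in E(H)}W(x_u,x_v)\,\prod_u dx_u$ for the limiting normalised count $N^*_{T_n}(H)/n^h$. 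Since every tournament limit arises from some sequence of tournaments, it suffices to produce one $W$ that is simultaneously non-quasirandom (some ordering has forward-edge density bounded away from $\tfrac12$) and has $t(H,W)=2^{-\binom h2}$; the plan is to obtain such $W$ by interpolating between two explicit extremes, with no dependence on $H$ beyond non-transitivity.

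For the low extreme take the transitive limit $W_{tr}(x,y)=\mathbf 1[x<y]$. Since $H$ is non-transitive it contains a directed triangle, hence cannot be realised inside a linear order, so $t(H,W_{tr})=0$. For the high extreme take the balanced blow-up $W_{bu}$ of $H$ itself: split $[0,1]$ into $h$ blocks $V_1,\dots,V_h$ of measure $1/h$, indexed by $V(H)$, put $W_{bu}\equiv\tfrac12$ on each $V_i\times V_i$ and $W_{bu}(x,y)=\mathbf 1[\overrightarrow{ij}\in E(H)]$ for $x\in V_i,\,y\in V_j$ with $i\ne j$. Splitting the integral defining $t(H,W_{bu})$ according to which block each coordinate lies in, a block pattern $\tau\colon V(H)\to V(H)$ contributes $h^{-h}2^{-m(\tau)}$ if $\tau$ is a homomorphism of $H$ into itself (with $m(\tau)$ the number of edges of $H$ collapsed by $\tau$) and $0$ otherwise; as the identity map is such a homomorphism with $m=0$ and all contributions are nonnegative, $t(H,W_{bu})\ge h^{-h}$. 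The crucial numerical fact is that $h^{-h}>2^{-\binom h2}$ exactly when $\log_2 h<\tfrac{h-1}{2}$, i.e.\ precisely for $h\ge 7$ --- this is the one point where the hypothesis $h\ge 7$ enters.

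Now consider $W_\theta:=\theta\,W_{bu}+(1-\theta)\,W_{tr}$ for $\theta\in[0,1]$, each a valid tournament limit. The map $\theta\mapsto t(H,W_\theta)$ is a polynomial, equal to $0$ at $\theta=0$ and at least $h^{-h}>2^{-\binom h2}$ at $\theta=1$, so by the intermediate value theorem there is $\theta^\ast\in(0,1)$ with $t(H,W_{\theta^\ast})=2^{-\binom h2}$. It remains to see that $W_{\theta^\ast}$ (indeed every $W_\theta$ with $\theta>0$) is non-quasirandom, and this is where the labelling of the blocks of $W_{bu}$ matters: taking $V_1,\dots,V_h$ so that the natural order $1<\dots<h$ is a \emph{median order} of $H$ forces $H$ to have more than $\tfrac12\binom h2$ forward edges in this order (the average over all orderings is $\tfrac12\binom h2$, and the count is not constant since an adjacent transposition changes it by $\pm1$). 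A short computation then gives that the forward-edge density of $W_\theta$ along the identity ordering of $[0,1]$ equals $1-\theta\big(1-\tfrac1{2h}-\tfrac{2|F|}{h^2}\big)$, which lies strictly between $\tfrac12$ and $1$ for $\theta\in(0,1]$; so $W_{\theta^\ast}$ is non-quasirandom. Realising $W_{\theta^\ast}$ by finite tournaments (by sampling, or directly as the random ``biased blow-up'' tournament whose pair-orientation probabilities are read off from $W_{\theta^\ast}$, together with a second-moment estimate for the $H$-count) produces the desired $T_n$, and hence $H$ is not globally forcing.

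The genuinely load-bearing idea is the choice of the high endpoint $W_{bu}$: interpolating against it works only because $h\ge 7$ makes $h^{-h}>2^{-\binom h2}$, while the non-transitivity of $H$ is exactly what makes the transitive limit a legitimate low endpoint (with $H$-count $0$). I expect the main technical chores to be the forward-density calculation and, more so, making the passage from the limit $W_{\theta^\ast}$ back to honest tournaments with the right count fully rigorous, but both are routine given standard limit/quasirandomness machinery.
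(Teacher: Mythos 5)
Your proposal is correct, and its combinatorial core coincides with the paper's: both arguments hinge on the same two endpoints --- the balanced blow-up of $H$, whose $H$-density is at least $h^{-h}$, which exceeds $2^{-\binom{h}{2}}$ exactly when $h\geq 7$, and the transitive tournament, whose $H$-density is $0$ by non-transitivity --- followed by an intermediate value argument. The difference is in how the interpolation and the non-quasirandomness certificate are implemented. The paper stays entirely finite: it builds the blow-up tournament $T_1$ directly, then switches to the transitive tournament one vertex at a time (reorienting all edges at $v_i$ to point forward), notes that each switch changes $N^*_T(H)$ by at most $hn^{h-1}$, and certifies non-quasirandomness of the intermediate $T_i$ via the degree-imbalance property $\mathcal{P}_5$ of Chung--Graham, using that at least $\delta n/2h$ vertices have been switched. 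You instead take a convex combination $\theta W_{bu}+(1-\theta)W_{tr}$ of tournament kernels, apply the IVT to the polynomial $\theta\mapsto t(H,W_\theta)$, certify non-quasirandomness by computing the forward-edge density of the identity ordering (which directly violates $\mathcal{P}_1$, provided the blocks are labelled by an ordering of $H$ with strictly more than $\tfrac12\binom{h}{2}$ forward edges --- your adjacent-transposition argument for the existence of such an ordering is fine), and then sample finite tournaments with second-moment concentration. Your route buys a cleaner non-quasirandomness check at the cost of importing limit machinery and the sampling/concentration step; the paper's route is self-contained and elementary but needs the slightly more delicate bookkeeping that the intermediate index $i$ is large. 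One small remark on the other half: you defer the ``transitive implies globally forcing'' direction to the cited exercise, whereas the paper actually reworks that argument to extract a stability statement (any tournament with asymptotically minimal $Tr_h$-count has almost all $|N(e)|=(1\pm o(1))n/4$, hence is quasirandom by property $P_4$ of Chung--Graham); the bare counting inequality from the exercise does not by itself give forcing, so this half does require the extra stability step the paper supplies.
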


We remark that, while the condition that $h\geq 7$ may seem unusual in Proposition \ref{prop: forcing iff transitive}, it has been proven by Coregliano, Parente and Sato \cite{CPS} that there is a non-transitive globally forcing tournament $H$ on five vertices (see Theorem 1.1 from \cite{CPS}). It would perhaps be interesting to determine which, if any, of the remaining small tournaments are also globally forcing.

\subsection{Locally forcing tournaments}
We have seen that having `correct count' of a fixed tournament $H$ is not enough to guarantee quasirandomness, for essentially any non-transitive $H$. This is a fairly common situation when studying quasirandom properties in general and the key insight to understand why came from Simonovits and S\'os \cite{SS1} who observed that quasirandomness is a hereditary property, in the sense that any large subgraph of a random structure must also be random-like. 

This leads us to the natural question of whether requiring that $T$ contains the `correct count' of $H$ in all large subsets of $V(T)$ is sufficient to guarantee quasirandomness of $T$. To be more precise, consider the following definition.

\begin{dfn*} 
	Let $H$ be an $h$-vertex tournament. Given an $n$-vertex tournament $T$, consider 
	the following property: 
	\begin{itemize}
		\item ${\cal P}^*_2(H)$: $N^*_T(H;U) = 2^{-\binom {h}{2}}|U|^h \pm o(n^h)$ for 
		every set $U \subset V(T)$.
	\end{itemize} 
	The tournament $H$ is said to be
	\emph{locally forcing} if ${\cal P}^*_2(H) \implies {\cal P}_1$.
\end{dfn*}

An analogous property for graphs was studied by Simonovits and S\'os for both induced \cite{SS1} and not-necessarily induced subgraphs \cite{SS2} as well as for hypergraphs by Conlon, H\`an, Person and Schacht \cite{CHPS} and Dellamonica and R\"odl \cite{DR}. In case of graphs the not-necessarily induced case turned out to be much easier and was resolved by Simonovits and S\'os \cite{SS2} who showed that all non-empty graphs must be `locally forcing'. This was recently reproved, without use of the regularity lemma, by Conlon, Fox and Sudakov \cite{CFS}. On the other hand the induced case is still very much open and seems to be the closer analogue to our problem. Indeed, in the case of embedding a tournament one needs to ensure that each pair of vertices get mapped to an edge with one of two possible orientations, while in the induced case each pair needs to be sent to one of two possible states (is an edge or is not an edge). One of the main conjectures in the area, due to Simonovits and S\'os \cite{SS2} says that all graphs on $4$ or more vertices must be `locally forcing' in the appropriate induced sense. They prove their conjecture holds for regular graphs and for various other families of graphs.

In the rest of this subsection we present our main results, which give a good understanding of locally forcing tournaments and show a surprisingly different behaviour compared to the one conjectured by Simonovits and S\'os in the graph case. Our first result shows that in order for a tournament to be locally forcing it must be quite strongly quasirandom in several ways.

\begin{thm} \label{thm: locally-forcing} 
		Any non-transitive, locally forcing $h$-vertex tournament $H$ satisfies 
		\begin{enumerate}[(i)]
			\item 
				\begin{equation*}
					\sum _{v \in V(H)} \big (d^+_H(v) - d^-_H(v)\big )^2 \leq h(h-1).
				\end{equation*}
			\item $|F_{\sigma, H}|=\frac{1}{2}\binom{h}{2} \pm h^{3/2}\sqrt{\log h}$ for every ordering $\sigma$ of $H$. 
			\item For any disjoint subsets $U,W \subseteq T$ we must have $|e(U,W)|\le \frac{1}{2}|U||W| + 2h^{3/2}$ 
		\end{enumerate}
\end{thm}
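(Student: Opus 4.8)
The plan is to deduce all three conclusions from a single moment inequality coming from one family of near-counterexamples. For $\gamma\in(0,\tfrac12)$ let $T=T_{n,\gamma}$ be the $\gamma$\emph{-biased random tournament}: fix the identity ordering of $[n]$ and, independently for each pair $i<j$, include $\overrightarrow{ij}$ with probability $\tfrac12+\gamma$. Taking $\sigma=\mathrm{id}$ gives $|F_{\sigma,T}|=(\tfrac12+\gamma)\binom n2\pm o(n^2)$, so $T_{n,\gamma}$ fails ${\cal P}_1$ for every fixed $\gamma\ne0$. On the other hand $T_{n,\gamma}[U]$ is again a $\gamma$-biased random tournament for every $U$, and a direct first-moment computation (classify each labelled map $V(H)\to U$ by the ordering $\rho$ of $V(H)$ it induces on $[n]$) gives $\mathbb{E}\,N^*_T(H;U)=2^{-\binom h2}|U|^h\,\Phi(\gamma)\pm O(n^{h-1})$, where, for a uniformly random ordering $\rho$ of $V(H)$,
\[
\Phi(\gamma):=\mathbb{E}_\rho\!\left[(1+2\gamma)^{|F_{\rho,H}|}(1-2\gamma)^{|B_{\rho,H}|}\right].
\]
Since changing one edge of $T$ changes $N^*_T(H;U)$ by at most $h^2|U|^{h-2}$, McDiarmid's inequality yields concentration on scale $o(n^h)$ with failure probability $\ll 2^{-n}$, which survives a union bound over all $2^n$ subsets $U$ (sets of size $o(n)$ being trivially negligible). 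Hence w.h.p.\ $N^*_T(H;U)=2^{-\binom h2}|U|^h\Phi(\gamma)\pm o(n^h)$ \emph{simultaneously for all} $U$, so if $\Phi(\gamma^*)=1$ for some $\gamma^*\in(0,\tfrac12)$ then $T_{n,\gamma^*}$ satisfies ${\cal P}^*_2(H)$ but not ${\cal P}_1$, contradicting local forcing.

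Writing $g(\rho):=|F_{\rho,H}|-|B_{\rho,H}|$ and substituting $2\gamma=\tanh t$ (so $1-4\gamma^2=\operatorname{sech}^2 t$ and $\tfrac{1+2\gamma}{1-2\gamma}=e^{2t}$) one gets $\Phi(\gamma)=(\cosh t)^{-\binom h2}M(t)$ with $M(t):=\mathbb{E}_\rho[e^{t\,g(\rho)}]$. Because $H$ is non-transitive we have $|B_{\rho,H}|\ge1$ for every $\rho$, so $\Phi(\gamma)\to0$ as $\gamma\to\tfrac12^-$; combined with $\Phi(0)=1$, continuity, and the fact that $\Phi\ne1$ on $(0,\tfrac12)$ established above, the intermediate value theorem forces $\Phi<1$ throughout $(0,\tfrac12)$, i.e.
\[
M(t)\ \le\ (\cosh t)^{\binom h2}\qquad\text{for all }t\ge0 ,
\]
so $g(\rho)$ is dominated, in the MGF sense, by a sum of $\binom h2$ independent Rademacher signs; note that $g$ is symmetric, since reversing $\rho$ negates it. For (i), Taylor-expand at $t=0$: the odd coefficients of $M$ vanish by symmetry and matching the $t^2$-coefficients gives $\operatorname{Var}_\rho(g(\rho))\le\binom h2$. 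Writing $g(\rho)=\sum_{e\in E(H)}X_e$ with $X_e=\pm1$ according as $e$ is $\rho$-forward, the $X_e$ are pairwise independent for disjoint $e$, while a three-element computation gives $\operatorname{Cov}(X_e,X_{e'})=\pm\tfrac13$ for edges sharing a vertex, with sign $+$ exactly when that vertex is a common head or a common tail; summing over vertices yields $\operatorname{Var}_\rho(g(\rho))=\tfrac{h(h-1)}{6}+\tfrac13\sum_{v}(d^+_H(v)-d^-_H(v))^2$, and (i) is the rearrangement of $\operatorname{Var}_\rho(g)\le\tfrac{h(h-1)}{2}$.

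For (ii), if $\rho^*$ attains $g^*:=\max_\rho g(\rho)$ then $M(t)\ge\tfrac1{h!}e^{tg^*}$, so $tg^*\le\ln(h!)+\binom h2\ln\cosh t\le\ln(h!)+\tfrac12\binom h2 t^2$, and optimising over $t$ gives $g^*\le\sqrt{2\binom h2\ln(h!)}\le h^{3/2}\sqrt{\log h}$; by symmetry $\min_\rho g(\rho)=-g^*$, and since $|F_{\sigma,H}|=\tfrac12(\binom h2+g(\sigma))$ this is exactly (ii). For (iii), given disjoint $U,W\subseteq V(H)$ put $R=V(H)\setminus(U\cup W)$ and let $\mathcal E_1,\mathcal E_2$ be the events that $\rho$ orders the vertices in blocks $R\mid U\mid W$ and $R\mid W\mid U$ respectively (uniform inside each block). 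Under both events the contribution to $\mathbb{E}[g\mid\cdot]$ of every within-block edge is $0$ and of every $R$–$U$ and $R$–$W$ edge is the same, so only the $U$–$W$ edges differ, giving $\mathbb{E}[g\mid\mathcal E_1]-\mathbb{E}[g\mid\mathcal E_2]=2\big(e(U,W)-e(W,U)\big)$. On the other hand $\Pr[\mathcal E_i]\ge3^{-h}$, so from $(\cosh t)^{\binom h2}\ge M(t)\ge\Pr[\mathcal E_i]\,e^{t\,\mathbb{E}[g\mid\mathcal E_i]}$ (Jensen) and the symmetry of $g$ one gets $|\mathbb{E}[g\mid\mathcal E_i]|\le\sqrt{2\binom h2\,h\ln3}\le 2h^{3/2}$ after optimising $t$; hence $|e(U,W)-e(W,U)|\le 2h^{3/2}$, and since $e(U,W)+e(W,U)=|U||W|$ this yields $e(U,W)\le\tfrac12|U||W|+2h^{3/2}$.

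I expect the main obstacle to be the first step: verifying that $T_{n,\gamma}$ satisfies ${\cal P}^*_2(H)$ for \emph{every} subset $U$ — which is exactly what forces the concentration estimate to beat the $2^n$-term union bound — and arranging the intermediate-value argument to mesh correctly with the $\varepsilon$–$\delta$ meaning of ``locally forcing''. Once the inequality $M(t)\le(\cosh t)^{\binom h2}$ is in hand, parts (i)–(iii) are routine (if slightly fiddly) moment computations.
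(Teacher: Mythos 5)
Your proof is correct and follows essentially the same route as the paper: your $T_{n,\gamma}$ is exactly the paper's ${\cal T}(n,\tfrac12+\gamma)$, the concentration-plus-union-bound step is the paper's Theorem \ref{thm: locally-forcing-is-simple}, and $\Phi(\gamma)$ is the paper's rescaled counting polynomial $q_H(2\gamma)$, so that $M(t)\le(\cosh t)^{\binom h2}$ is just the statement $q_H\le 1$ after the substitution $2\gamma=\tanh t$. The only difference is organizational: you apply the intermediate value theorem once up front and then read off (i)--(iii) as a second-moment, a Chernoff, and a conditional-Jensen estimate, whereas the paper reruns the IVT inside each of Lemmas \ref{lem: restrict-simple-degree}, \ref{lem:simple-is-quasirandom1} and \ref{lem:simple-is-quasirandom2}; the underlying computations (the $\pm\tfrac13$ covariance for intersecting edges, the single-term lower bound on $M(t)$, the $3^{-h}$-probability block orderings) are the same.
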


Given that Theorem \ref{thm: locally-forcing} says that any locally forcing graph must be strongly quasirandom a natural guess for an example might be to take the random tournament, obtained by orienting each edge of a complete graph uniformly at random, independently between edges. Our next result shows that quite surprisingly with positive probability the random tournament fails condition (i) of Theorem \ref{thm: locally-forcing}.

\begin{cor}\label{cor:random-is-not-locally-forcing}
The random tournament is not locally forcing with positive probability.
\end{cor}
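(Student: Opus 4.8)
The plan is to show that a random tournament $H$ on $h$ vertices fails part (i) of Theorem~\ref{thm: locally-forcing} with probability bounded away from zero, which by that theorem immediately implies $H$ is not locally forcing (a transitive tournament obviously is not the random tournament for $h$ large, and in any case the contrapositive of part (i) applies to any non-transitive $H$). So the entire task reduces to a probabilistic estimate: letting $X = \sum_{v \in V(H)} \big(d^+_H(v) - d^-_H(v)\big)^2$, I want to prove that $\Pr[X > h(h-1)]$ is at least some absolute constant for all large $h$.

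First I would set up the standard description of the degree imbalances. Write $D_v = d^+_H(v) - d^-_H(v)$. Since each of the $h-1$ edges at $v$ contributes $\pm 1$ to $D_v$ independently with probability $1/2$, we have $D_v = 2B_v - (h-1)$ where $B_v \sim \Bin(h-1, 1/2)$, so $\mathbb{E}[D_v^2] = \operatorname{Var}(D_v) = h-1$, and hence $\mathbb{E}[X] = h(h-1)$. Thus the threshold $h(h-1)$ in part (i) is exactly the expectation of $X$, and the claim is simply that $X$ exceeds its own mean with constant probability. The natural way to get this is a second-moment / anti-concentration argument: if I can show that $\operatorname{Var}(X) = \Theta(h^3)$ (same order as $(\mathbb{E} X)^2/h = \Theta(h^3)$, i.e.\ $X$ has standard deviation of order $h^{3/2}$, which is order $\mathbb{E}[X]/\sqrt h$), then by the Paley--Zygmund inequality (or a direct central-limit-type statement) $X$ lies above its mean with probability bounded below by an absolute constant.

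The key computation is therefore $\operatorname{Var}(X) = \sum_{u,v} \operatorname{Cov}(D_u^2, D_v^2)$. The diagonal terms $u = v$ contribute $\sum_v \operatorname{Var}(D_v^2)$, and since $D_v$ is a centered binomial scaled, $\operatorname{Var}(D_v^2) = \mathbb{E}[D_v^4] - (h-1)^2 = \Theta(h^2)$ by the standard fourth-moment bound for $\Bin(h-1,1/2)$; summing over $v$ gives $\Theta(h^3)$. For $u \neq v$, $D_u$ and $D_v$ share exactly the single edge between $u$ and $v$: writing $D_u = \varepsilon_{uv} + S_u$ and $D_v = -\varepsilon_{uv} + S_v$ where $\varepsilon_{uv} = \pm1$ is the contribution of the $uv$-edge and $S_u, S_v$ are the (independent of each other and of $\varepsilon_{uv}$) sums over the remaining $h-2$ edges, one expands $D_u^2 D_v^2$ and takes expectations. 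Because $\mathbb{E}[\varepsilon_{uv}] = 0$, $\mathbb{E}[S_u] = 0$ and $\mathbb{E}[\varepsilon_{uv}^2] = 1$, the surviving cross terms give $\operatorname{Cov}(D_u^2, D_v^2) = \mathbb{E}[D_u^2 D_v^2] - (h-1)^2 = O(h)$ per pair (the leading $h^2$ pieces from $S_u^2 S_v^2$ cancel against $(\mathbb{E} D_u^2)(\mathbb{E} D_v^2)$, leaving lower-order terms). Hence the off-diagonal total is $O(h^2 \cdot h) = O(h^3)$ as well, so $\operatorname{Var}(X) = \Theta(h^3)$. Combining with $\mathbb{E}[X] = h(h-1)$ and Paley--Zygmund (applied to $X$, or to $X - c$ for a suitable truncation, using also an upper bound on the fourth moment of $X$ to control the Paley--Zygmund denominator) yields $\Pr[X \geq h(h-1)] \geq c_0 > 0$.

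The main obstacle I anticipate is the covariance bookkeeping: one has to be careful that the $\Theta(h^2)$-sized contributions to $\operatorname{Cov}(D_u^2, D_v^2)$ genuinely cancel so that each off-diagonal term is only $O(h)$ — otherwise the off-diagonal sum would be $\Theta(h^4) = \Theta((\mathbb{E} X)^2)$, which is too weak for Paley--Zygmund to give a constant lower bound (it would still give something, but the cleanest argument needs the stronger $o((\mathbb{E}X)^2)$ bound on the variance, or rather $\operatorname{Var}(X) = o((\mathbb E X)^2)$, which does hold since $h^3 = o(h^4)$). A secondary point is that Paley--Zygmund gives $\Pr[X > \theta \mathbb{E}X]$ for $\theta < 1$, so to reach the exact constant $1$ one should instead observe that $\operatorname{Var}(X)/(\mathbb{E} X)^2 \to 0$ forces $X/\mathbb{E}X \to 1$ in probability, and then a matching anti-concentration lower bound (e.g.\ via the Berry--Esseen theorem applied to the approximately-Gaussian statistic $\sum_v(D_v^2 - (h-1))$, whose terms are independent after conditioning appropriately, or simply via a two-sided Chebyshev argument combined with symmetry of the fluctuations) shows $X$ exceeds its mean with probability tending to $1/2$, in particular bounded below by a positive constant. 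Either route is routine once the $\Theta(h^3)$ variance bound is in hand.
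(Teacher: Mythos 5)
Your reduction is the same as the paper's: let $X=\sum_{v}(d^+_H(v)-d^-_H(v))^2$, note $\mathbb{E}[X]=h(h-1)$ is exactly the threshold in Theorem \ref{thm: locally-forcing}(i), and show $\Pr[X>\mathbb{E}X]\geq c_0>0$. Your variance computation is also correct (in fact the off-diagonal covariances vanish exactly, so $\operatorname{Var}(X)=\sum_v\operatorname{Var}(D_v^2)=2h(h-1)(h-2)$, matching the paper's second-moment calculation). The problem is the final anti-concentration step, and your own "main obstacle" diagnosis points at the wrong place: the covariance bookkeeping is fine, but $\operatorname{Var}(X)=o((\mathbb{E}X)^2)$ is a \emph{concentration} statement and cannot, by itself, show that $X$ exceeds its mean with positive probability. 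Paley--Zygmund applied to $X$ only controls $\Pr[X>\theta\,\mathbb{E}X]$ for $\theta<1$, which is useless here; applied to the centered variable $Y=X-\mathbb{E}X$ it does not apply at all, since $\mathbb{E}Y=0$. A mean-zero variable with variance $\Theta(h^3)$ can in principle be positive with probability $o(1)$, so some genuinely new input is needed.

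Your two proposed rescues do not close this gap as stated. The fluctuation $Y$ is a degree-two Rademacher chaos $2\sum_{v}\sum_{j<k}I_{vj}I_{vk}$; it is \emph{not} symmetric about $0$ (already $I_1I_2+I_1I_3+I_2I_3$ equals $3$ with probability $1/4$ and $-1$ with probability $3/4$), so "two-sided Chebyshev plus symmetry" fails. Nor are the summands $D_v^2-(h-1)$ independent, and no conditioning makes them so; Berry--Esseen for independent sums does not apply, and you would need a CLT for degenerate quadratic forms, which you neither name nor verify. The missing ingredient is a bound on the \emph{fourth} moment of $Y$: once one knows $\mathbb{E}[Y^4]\leq b\,(\mathbb{E}[Y^2])^2$ for an absolute constant $b$, the inequality $\Pr[Y>0]\geq (\mathbb{E}[Y^2])^2/(2^{4/3}\,b\,(\mathbb{E}[Y^2])^2)=1/(2^{4/3}b)$ (valid for any mean-zero $Y$, via $\mathbb{E}[Y\mathbf{1}_{Y>0}]=\tfrac12\mathbb{E}|Y|$ and H\"older) finishes the proof. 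This is exactly what the paper does, obtaining $b=81$ from the Bonami--Beckner hypercontractive inequality for degree-two polynomials in independent $\pm1$ variables; one could alternatively compute $\mathbb{E}[Y^4]$ by hand with some case analysis. You gesture at "an upper bound on the fourth moment of $X$" in one parenthetical, but you never establish it, and your closing claim that everything is "routine once the $\Theta(h^3)$ variance bound is in hand" is precisely where the argument is incomplete.
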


This tells us that a positive proportion of all tournaments are in fact not locally forcing, in stark contrast to the induced graph problem in which Simonovits-S\'os conjecture states that all graphs on $4$ or more vertices should be locally forcing. In particular, one can easily find a regular tournament which fails conditions (ii) or (iii) whereas in the induced graph case any regular graph is known to be locally forcing.

This might suggest that there are essentially no non-transitive locally forcing tournaments, as in the global forcing case. We show that this is also not the case and find many non-transitive examples of locally forcing tournaments.

\begin{thm}\label{thm: exist-locally-forcing}
    For any large enough $h$ there exist non-transitive, $h$-vertex, locally forcing tournaments.
\end{thm}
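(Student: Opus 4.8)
\emph{Proof idea.} The plan is in two parts: first establish a sufficient condition for a tournament $H$ to be locally forcing, then construct, for every large $h$, a non-transitive $h$-vertex tournament satisfying it. For the first part, pass to the limit language of \emph{tournamentons} --- measurable maps $W\colon[0,1]^2\to[0,1]$ with $W(x,y)+W(y,x)=1$ --- in which $\mathcal{P}^*_2(H)$ becomes ``$\int_{A^h}\prod_{\overrightarrow{ij}\in E(H)}W(x_i,x_j)\,dx=2^{-\binom h2}\mu(A)^h$ for every measurable $A\subseteq[0,1]$'' and $\mathcal{P}_1$ becomes ``$W\equiv\tfrac12$''. Writing $W=\tfrac12+U$ with $U$ antisymmetric and expanding the product over subsets $S\subseteq E(H)$: the $|S|\le1$ terms yield only the main term $2^{-\binom h2}\mu(A)^h$, the disjoint-edge $|S|=2$ terms vanish, and a pair of edges of $H$ sharing a vertex $v$ contributes $\pm\,(\tfrac12)^{\binom h2-2}\mu(A)^{h-3}\int_A g_A(y)^2\,dy$ where $g_A(y):=\int_A U(x,y)\,dx$, with sign $+1$ for a co-oriented (in--in or out--out) cherry at $v$ and $-1$ for a through cherry. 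Thus the quadratic part in $U$ of $\int_{A^h}\prod_{\overrightarrow{ij}\in E(H)}W(x_i,x_j)\,dx-2^{-\binom h2}\mu(A)^h$ equals $(\tfrac12)^{\binom h2-2}c_2(H)\,\mu(A)^{h-3}\int_A g_A(y)^2\,dy$, where $c_2(H)=\tfrac12\big(\sum_{v}(d^+_H(v)-d^-_H(v))^2-h(h-1)\big)$ --- exactly the quantity bounded in Theorem~\ref{thm: locally-forcing}(i). Hence if $\sum_v(d^+_H(v)-d^-_H(v))^2<h(h-1)$ this quadratic part is strictly negative for every $U\ne0$ (since $\int_A g_A(y)^2\,dy=0$ for all measurable $A$ forces $U\equiv0$), which handles all tournamentons $W$ close to $\tfrac12$.

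The substantive work is to extend this to $W$ \emph{not} close to $\tfrac12$. For such a $W$ I would run a regularity-partition argument: $W$ agrees, on some small cell, with a ``$p$-biased transitive'' tournamenton for a $p\ne\tfrac12$, and on a vertex set corresponding to such a cell the rescaled $H$-count is approximately $g_H(p):=\tfrac1{h!}\sum_{\sigma\in S_h}p^{|F_{\sigma,H}|}(1-p)^{\binom h2-|F_{\sigma,H}|}$, which satisfies $g_H(\tfrac12)=2^{-\binom h2}$; if $g_H(p)\ne2^{-\binom h2}$ for all $p\in(0,1)\setminus\{\tfrac12\}$ --- a condition that fails only in degenerate situations (in particular it forces $g_H$ non-constant, i.e.\ $|F_{\sigma,H}|$ not exactly $\Bin(\binom h2,\tfrac12)$-distributed over a uniform $\sigma$) --- this cell witnesses an incorrect local count. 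Combining the infinitesimal analysis with a reduction of a general $W$ to such structured cells via a regularity partition should yield a sufficient condition for $H$ to be locally forcing of the rough shape: $H$ is near-regular with $\sum_v(d^+_H(v)-d^-_H(v))^2\le h(h-1)-\Omega(h)$, $H$ has edge-discrepancy as in Theorem~\ref{thm: locally-forcing}(iii), and $g_H(p)\ne2^{-\binom h2}$ for $p\in(0,1)\setminus\{\tfrac12\}$. This is a mild but genuine strengthening of the necessary conditions of Theorem~\ref{thm: locally-forcing}, so the two can coexist.

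For the second part I would check that a uniformly random $h$-vertex tournament, conditioned on $\sum_v(d^+_H(v)-d^-_H(v))^2\le h(h-1)-h$, meets this criterion with positive probability. The conditioning event has probability bounded away from $0$ (that sum has mean $h(h-1)$ and standard deviation $\Theta(h^{3/2})$), so it suffices to show the remaining requirements hold with high probability for an unconditioned random tournament: its edge-discrepancy is at most $\tfrac12h^{3/2}$ with high probability, comfortably inside the bound of Theorem~\ref{thm: locally-forcing}(iii); and $g_H(p)\ne2^{-\binom h2}$ off $\tfrac12$, because a random $H$ does not have $|F_{\sigma,H}|$ exactly binomially distributed, and more quantitatively $g_H$ has a strict interior maximum $2^{-\binom h2}$ at $\tfrac12$ (this is where $c_2(H)<0$ with room to spare enters) while the higher Taylor coefficients, controlled for random $H$, prevent it from returning to that value. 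A random tournament is also non-transitive with high probability, so such $H$ exist for all large $h$. (Alternatively, one may take $H$ to be a Paley tournament on $q$ vertices --- $q\equiv3\pmod4$ prime with $q\le h$ and $h-q=o(h)$ by short-interval results on primes $\equiv3\pmod4$ --- with the remaining $h-q$ vertices attached by orienting all new edges quasirandomly; regularity of the Paley part gives $\sum_v(d^+_H(v)-d^-_H(v))^2=o(h^2)$, the discrepancy stays $O(h^{3/2})$, and $H$ is a bounded perturbation of a quasirandom tournament, yielding a more explicit family.)

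The main obstacle is the second step of the first part: showing that \emph{every} tournamenton $W\ne\tfrac12$, including those far from $\tfrac12$ in cut distance, admits a measurable set witnessing an incorrect local $H$-density. The negative-definite quadratic term controls only infinitesimal perturbations, and for large deviations one must rule out a conspiracy in which the higher-order multilinear forms attached to $H$ --- one for each subset of $E(H)$, of triangle type, path type and more intricate couplings --- cancel the quadratic term for every choice of $A$. This is precisely where the strong quasirandomness of $H$ forced by Theorem~\ref{thm: locally-forcing}, and built into the construction, must be exploited essentially and quantitatively, presumably by reducing via a regularity partition to nearly-structured cells and then using the behaviour of the polynomial $g_H$ away from $\tfrac12$.
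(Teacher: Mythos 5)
Your outline has the right overall shape (a sufficient criterion phrased through the polynomial $g_H$, which is exactly the paper's counting polynomial $p_H$, followed by a probabilistic construction), but both halves have genuine gaps. First, the sufficiency step. The perturbative computation around $W\equiv\tfrac12$ is fine but only handles tournamentons close to $\tfrac12$, and your proposed global reduction --- ``find a small cell on which $W$ looks like a $p$-biased tournamenton with $p\ne\tfrac12$'' --- cannot work as stated: a non-quasirandom $W$ need not contain any such cell. The bipartite-type obstruction ${\cal T}(n,n,\alpha)$ (both halves internally quasirandom, density $\alpha\ne\tfrac12$ between them) is not quasirandom, yet no cell of it is ``$p$-biased''; ruling it out requires controlling counts of $H$ split with $a$ vertices on one side and $h-a$ on the other, i.e.\ the degree counting polynomials $p_{H,a}$ and the property the paper calls \esimple. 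This is precisely where the paper's proof of the ``if'' direction of Theorem \ref{thm:-simple and esimple iff locally forcing} does its real work, via a regularity partition, a Ramsey--Tur\'an clique decomposition (Lemma \ref{lem: Turan-Ramsey decomp into cliques}), and a K\H{o}v\'ari--S\'os--Tur\'an argument showing an auxiliary bipartite graph between cliques is $K_{a,h-a}$-free; the single condition $g_H(p)\ne 2^{-\binom h2}$ for $p\ne\tfrac12$ (the paper's \simple{} property) is not known to suffice on its own --- the paper explicitly leaves that open. Your hypothesis of near-regularity would supply \esimple{} through Lemma \ref{lem: nearly regular is esimple}, but nothing in your sketch uses it, so the global step is missing an essential mechanism, not merely details.

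Second, the construction. Conditioning a uniformly random tournament on $\sum_v(d^+_H(v)-d^-_H(v))^2\le h(h-1)-h$ (an event of probability bounded away from $0$, as in the proof of Corollary \ref{cor:random-is-not-locally-forcing}, so that part is fine) only fixes the sign of the quadratic coefficient of $q_H$, hence controls $g_H$ only near $\tfrac12$. Your assertion that ``higher Taylor coefficients, controlled for random $H$, prevent it from returning'' to $2^{-\binom h2}$ at intermediate $p$ is unsubstantiated, and it is essentially a variant of the open problem in the paper's concluding remarks (whether random regular tournaments are \simple{} w.h.p.): one must bound all $\binom h2$ coefficients simultaneously to exclude roots of $q_H(x)=1$ away from $0$. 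The paper sidesteps exactly this by using the structured random model ${\cal D}_{\cal P}$ (random orientations of the triangles of a near-Steiner decomposition of $K_h$), for which $q_H(x)=(1-x^2)^L s_H(x)$ and a second-moment estimate on the coefficients of $s_H$ gives $q_H(x)<1$ for $x\ne 0$ w.h.p.\ (Lemma \ref{lem: exists-regular-simple-tournaments}); near-regularity of that model then gives \esimple{} by Lemma \ref{lem: nearly regular is esimple}, Theorem \ref{thm:-simple and esimple iff locally forcing} gives local forcing, and non-transitivity is automatic from the cyclic triangles. Your Paley alternative has the same defect: regularity controls only the quadratic coefficient, and nothing shows Paley tournaments (let alone their paddings by $h-q$ extra vertices) satisfy $g_H(p)\ne 2^{-\binom h2}$ for all $p\ne\tfrac12$.
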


Our examples of locally forcing tournaments come from the following random construction. We take a Steiner triple system on $h$ vertices, that is a partition of the edge set of the complete graph on $h$ vertices into edge disjoint triangles. Now we orient each of these triangles into one of two cycles of length three uniformly at random and independently between triangles. We show that this produces with high probability a locally forcing tournament. It is well known (see \cite{Kirkman}) that Steiner triple systems exist provided $h\equiv 1,3 \pmod{6}$ and our arguments allow us a lot of freedom to modify the above procedure in order to obtain examples for other $h$ as well. 

All our results are based on a characterisation of locally forcing tournaments in terms of certain graph polynomials. In order to motivate how these polynomials arise let us describe two natural candidates for tournaments with correct local counts of a tournament $H$, which are not quasirandom.

\begin{dfn*}Given $\alpha \in [0,1]$, let ${\cal T}(n,\alpha )$ denote the distribution on the set of tournaments with vertex set $\{1,\ldots, n\}$ in which each pair $\{i,j\}$ with $1 \leq i < j \leq n$ appears independently as $\overrightarrow {ij}$ with probability $\alpha$. We denote by $\mathcal{T}_{cliq}$ the collection of all distributions ${\cal T}(n,\alpha )$ as $\alpha$ varies.
\end{dfn*}
\noindent Loosely speaking we say that a graph $H$ is \simple{} if no ${\cal T}(n,\alpha )$ shows that $H$ is not locally forcing. In other words if there is no $\alpha \neq 1/2$ such that ${\cal T}(n,\alpha )$, which is clearly not quasirandom, has the same local counts of $H$ as ${\cal T}(n,1/2)$ (which {\em is} quasirandom).

\begin{dfn*} Given $\alpha \in [0,1]$, let ${\cal T}(n,n, \alpha)$ denote the distribution on the set of tournaments with vertex set $\{1,\ldots, 2n\}$ in which both $\{1,\ldots, n\}$ and $\{n+1,\ldots, 2n\}$ are oriented according to $\mathcal{T}(n,1/2)$ and each pair $\{i,j\}$ with $1 \leq i \leq n, n+1 \le j \le 2n$ appears as $\overrightarrow {ij}$ with probability $\alpha$, with all choices made independently. We denote by $\mathcal{T}_{bip}$ the collection of all distributions ${\cal T}(n,n,\alpha )$ as $\alpha$ varies.
\end{dfn*}
\noindent Loosely speaking a graph $H$ is \esimple{} if no ${\cal T}(n,n,\alpha )$ shows that $H$ is not locally forcing. I.e.\ if there is no $\alpha \neq 1/2$ such that ${\cal T}(n,n,\alpha )$, which is not quasirandom, has the same local counts of $H$ as ${\cal T}(n,n,1/2)={\cal T}(2n,1/2)$.

Both properties \simple{} and \esimple{} can be expressed in terms of certain polynomial equations depending on the graph $H$. Our actual definitions of both properties go directly via these polynomial equations and we refer the reader to Section \ref{sec: counting polynomial} for exact definitions.

Quite remarkably it turns out that if $H$ is not locally forcing one must be able to find an $\alpha$ such that either ${\cal T}(n,\alpha )$ or ${\cal T}(n,n,\alpha )$ show $H$ is not locally forcing. In particular, we show:

\begin{thm}\label{thm:-simple and esimple iff locally forcing}
$H$ is locally forcing if and only if it is \simple{} and \esimple{}.
\end{thm}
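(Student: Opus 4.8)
The easy direction is immediate: if $H$ is locally forcing then in particular neither ${\cal T}(n,\alpha)$ nor ${\cal T}(n,n,\alpha)$ with $\alpha \neq 1/2$ can have the same local $H$-counts as the quasirandom distribution ${\cal T}(\cdot,1/2)$ while failing ${\cal P}_1$, so $H$ is both ${\cal T}_{cliq}$-forcing and ${\cal T}_{bip}$-forcing. (Here one uses that ${\cal T}(n,\alpha)$ and ${\cal T}(n,n,\alpha)$ are with high probability not quasirandom for $\alpha\neq 1/2$, which is a routine concentration computation.) The content is the converse: assuming $H$ is ${\cal T}_{cliq}$-forcing and ${\cal T}_{bip}$-forcing, we must show $H$ is locally forcing.

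The plan is to argue the contrapositive via a compactness/limit-object reduction. Suppose $H$ is not locally forcing. Then there is a sequence of tournaments $T_n$, with $|V(T_n)|\to\infty$, that satisfy ${\cal P}^*_2(H)$ (correct local $H$-counts on every vertex subset) but violate ${\cal P}_1$, i.e. admit an ordering with $|F_{\sigma,T_n}|$ bounded away from $\frac14 n^2$. Passing to a subsequence, let $W$ be a tournament limit (a "tournamenton", a measurable map $W:[0,1]^2\to[0,1]$ with $W(x,y)+W(y,x)=1$) of the $T_n$. The local-count hypothesis ${\cal P}^*_2(H)$ becomes the statement that for every measurable set $S\subseteq[0,1]$ the $H$-density of $W$ restricted to $S$ equals $(1/2)^{\binom h2}|S|^h$; and the failure of ${\cal P}_1$ becomes the statement that $W$ is not quasirandom, equivalently $W$ is not a.e. equal to the constant $1/2$. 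So it suffices to show: \emph{any} non-constant tournamenton $W$ all of whose vertex-subset restrictions have the correct $H$-density gives rise to a witness of the form ${\cal T}(n,\alpha)$ or ${\cal T}(n,n,\alpha)$.

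This is where the counting polynomials from Section~\ref{sec: counting polynomial} enter, and this is the main obstacle. The idea is that the condition "$W$ restricted to $S$ has the right $H$-density for all $S$" is an infinite family of polynomial identities in the "moments" of $W$; by differentiating this identity in the set $S$ (i.e. taking $S$ to be a union of small cells and expanding), one extracts that a single-variable polynomial $p_H$ — the ${\cal T}_{cliq}$-forcing polynomial, built from the sub-tournament structure of $H$ — must vanish at the relevant density parameter, and similarly a two-variable (bipartite-type) polynomial $q_H$ controls the cross-densities. Concretely, one analyzes the structure of a non-constant $W$: if $W$ is "close to" a step function with one part, a weighted-clique perturbation shows $p_H$ has a root $\alpha\neq 1/2$, exhibiting ${\cal T}(n,\alpha)$ as a witness; if $W$ genuinely splits into two or more blocks with a non-trivial cross-density, the bipartite polynomial $q_H$ has a root $\alpha\neq1/2$, exhibiting ${\cal T}(n,n,\alpha)$. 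The delicate point is to show these two cases are exhaustive — that one never needs a more complicated (say three-block, or genuinely non-step) counterexample — which presumably follows from convexity/extremality: among all non-constant $W$ with correct local $H$-counts one can push to an extreme point of the relevant constraint set, and extreme points have at most the "rank-one plus block" structure captured by ${\cal T}_{cliq}$ and ${\cal T}_{bip}$. I expect establishing this exhaustiveness — reducing an arbitrary local-count-preserving perturbation to one of the two canonical forms — to be the crux of the argument, with everything else being bookkeeping with the counting polynomials.
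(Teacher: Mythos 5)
Your forward direction is fine and matches the paper's: one samples from ${\cal T}(n,\alpha )$ or ${\cal T}(n,n,\alpha )$, checks concentration of the local counts (Azuma for $N^*_T(H;U)$ over all $U$, Chernoff for the forward edges), and notes these models are not quasirandom when $\alpha \neq 1/2$. The only point to be careful about is that \simple{} and \esimple{} are \emph{defined} by polynomial conditions ($p_H(x) \neq 2^{-\binom{h}{2}}$ for $x\neq 1/2$, respectively no common root of the $p_{H,a}$ in $(1/2,1]$), and failing either condition is exactly what produces the bad $\alpha$; this is the content of Theorem \ref{thm: locally-forcing-is-simple} and Lemma \ref{lem: locally-forcing-is-extremely-simple}.

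The converse, however, has a genuine gap, and you flag it yourself: your entire argument rests on the claim that any non-quasirandom (limit of a) tournament with correct local $H$-counts ``reduces to'' one of the two canonical forms ${\cal T}(n,\alpha )$ or ${\cal T}(n,n,\alpha )$, and you offer only ``presumably follows from convexity/extremality'' for this exhaustiveness. That step is not established, and as a global structural claim it is not the right target: there is no reason a counterexample should be close to a one-parameter clique model or a two-block model --- it could a priori have many blocks with varying cross-densities, and nothing in your sketch rules this out. The paper avoids any such classification. It takes an arbitrary $T$ satisfying ${\cal P}^*_2(H)$ and an arbitrary ordering, cuts the ordering into $L$ consecutive blocks, applies the directed regularity lemma refining this partition, and then argues \emph{locally} about $h$-tuples of clusters: a Ramsey--Tur\'an argument (Lemma \ref{lem: Turan-Ramsey decomp into cliques}) groups most clusters within each block into cliques of the reduced graph on which all pairwise densities lie in a common short interval; the counting lemma together with \simple{} forces those densities to be $1/2 \pm \eta$; and for two such cliques in different blocks, \esimple{} forces the auxiliary bipartite graph of $\varepsilon$-regular cluster pairs with density near any fixed $d > 1/2+\theta$ to be $K_{a,h-a}$-free for a suitable $a$ (otherwise one exhibits $h$ clusters whose partite $H$-count is $h!\,p_{H,a}(d)$ up to small errors, contradicting Proposition \ref{prop: partite_counting}). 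The K\H{o}v\'ari--S\'os--Tur\'an theorem then bounds the number of forward-dense cluster pairs and hence the number of forward edges in the given ordering. In short, the two hypotheses enter as local polynomial constraints on densities between clusters of a regularity partition, not as a classification of counterexamples; without a proof of your exhaustiveness step the proposal does not establish the hard direction.
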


This result allows us to answer the question of whether a fixed graph $H$ is locally forcing or not in terms of whether certain polynomial equations have a common real root in a bounded interval. This can be answered using a combination of Sturm's Theorem (see \cite{BPR}) and the Euclidean Algorithm for computing the greatest common divisor of a sequence of polynomials. See the concluding remarks for more details.\vspace{1mm}

\noindent \textbf{Organisation of the paper:} The short proof of Proposition \ref{prop: forcing iff transitive} is given in the next section. In Section \ref{sec: counting polynomial} a counting polynomial associated with \simple{} is introduced. In the same section we show that a locally forcing tournament must be \simple{} and that this property imposes many restrictions on $H$, which gives us a proof of Theorem \ref{thm: locally-forcing}. Despite this in Section \ref{sec: simple tournaments} we find many \simple{} tournaments. In Sections \ref{sec: reg simple tourns are forcing} and \ref{sec: proof of simple and esimple implies local forcing} we introduce the polynomials related to the property of \esimple{}, show that any nearly regular tournament must be \simple{} and show Theorem \ref{thm:-simple and esimple iff locally forcing}. Using this we prove that many of our examples of \simple{} tournaments are in fact locally forcing.\vspace{2mm}

\noindent \textbf{Notation:} Before closing this section we introduce some further notation. Given $m,n \in {\mathbb N}$, let $[n] = \{1,\ldots ,n\}$ and $(n)_m$ denote the falling factorial $(n)_m = n(n-1)\cdots (n-m+1)$. Given a set $X$ we write $\binom{X}{k}$ for the collection of $k$-element subsets of $X$. Given a probability distribution ${\cal D}$ on $X$ we write $x \sim {\cal D}$ to denote an element $x\in X$ selected randomly according to ${\cal D}$. We simply write $x \sim X$ when ${\cal D}$ is taken to be the uniform distribution on $X$. All our logarithms are natural so in base $e$.

\section{Globally forcing tournaments}

In this section we will give the short proof of Proposition \ref{prop: forcing iff transitive}. We first show that asymptotically the number of transitive subtournaments of order $h$ in a tournament of order $n$ is minimised by the random tournament. This is precisely the content of Exercise 10.44(b) of \cite{lovasz-book}. We need a variation of this proof, which allows us to also show the fact that any asymptotic minimiser must be quasirandom. This implies that $Tr_h$ is globally forcing for $h \ge 4$ showing the `if' part of the statement of Proposition \ref{prop: forcing iff transitive}.

\begin{proof}[Proof of Proposition \ref{prop: forcing iff transitive}]
	We will first prove that $N^*_T(h):= N^*_T(Tr_h)$ satisfies 
	$N^*_T(h) \geq f_h(n)=(2^{-\binom {h}{2}}-o(1) \big ) { n^h }$ for all $n$-vertex 
	tournaments $T$ where 
	$$
	f_h(n)=
        \begin{cases}
        \prod_{j=0}^{h-1}\left(\frac{n+1}{2^j}-1\right) \quad &\text{ if } n\ge 2^{h-1}-1\\
        0 \quad &\text{ else.}
        \end{cases}
    $$

	We will prove this by induction on $h$. For $h=1$ and $2$ the result is 
	immediate. Furthermore, 
		\begin{equation}
			\label{equation: special case k=3}
			N^*_T(3) 
					= 
			\sum _{v \in V(T)} \binom {d^+(v)}{2} 
					\geq
			n \binom {\frac {n-1}{2}}{2} = n(n-1)(n-3)/8=f_3(n).
		\end{equation}	
	Now assume $h\geq 4$ and $n \ge 2^{h-1}-1$ as otherwise the result is trivial. For each edge $e = \overrightarrow{uv} \in E(T)$ let 
	$N(e) = \{w \in V(T): \overrightarrow{uw}, \overrightarrow{vw} \in E(T)\}$.
	Clearly
		\begin{equation}
			\label{equation: directed triangle count}
			N^*_T(3) = \sum _{e \in E(T)} 
			|N(e)|.
		\end{equation}
	Letting $T[N(e)]$ be the subtournament of $T$ with vertex 
	set $N(e)$, by induction on $h$
		\begin{equation*}
			N^*_T(h) = 
			\sum _{e\in E(T)} 
			N^*_{T[N(e)]}({h-2})
				\geq 
			\sum _{e \in E(T)} 
			 f_{h-2}(|N(e)|).
		\end{equation*}
	It is easy to see that $f_h$ is convex so 
	Jensen's inequality together with 
	\eqref{equation: directed triangle count} implies
		\begin{align}
			\label{equation: transitive subtournaments lower 
			bound}
				N^*_T(h) 
			\geq 
				\binom {n}{2} 
				f_{h-2}{\Bigg ( 
				\frac {N^*_T(3)}{\binom {n}{2}} \Bigg )}
			& = 
				n \cdot \frac{n-1}{2} \cdot f_{h-2}\left(\frac{n-3}{4}\right)
				\nonumber \\ 
			& = 
				n \cdot \frac{n-1}{2} \cdot \prod_{j=0}^{h-3}\left(\frac{n+1}{2^{j+2}}-1\right)
				\nonumber \\ 
			& = 
				\prod_{j=0}^{h-1}\left(\frac{n+1}{2^j}-1\right). 
		\end{align}
	Where in the second equality we used that $\frac{n-3}{4}\ge \frac{2^{h-1}-1-3}{4}=2^{h-3}-1$.
	
	We now show that $Tr_h$ is globally forcing for $h\geq 4$. To see this, suppose 
	that $N^*_T(h) \leq (2^{-\binom {h}{2}} + o(1)) n^{h}$. From \eqref{equation: transitive subtournaments lower bound} 
	this gives $N^*_T(3) \le (1+o(1)) n^3 / 8$. As $h\geq 4$, 
	the function $x^{h-2}$ is strictly convex and by the 
	application of Jensen in \eqref{equation: transitive 
	subtournaments lower bound} we find
	$|N(e)| = (1 \pm o(1))\frac{n}{4}$ for almost all 
	$e \in E(T)$. However, by a result of Chung and Graham (see property $P_4$ and Theorem 1 in \cite{CG}) this property 
	implies that $T$ is quasirandom. Thus $Tr_h$ is globally forcing for $h\geq 4$.

	To complete the proof of the proposition, it remains to show that any 
	non-transitive $h$-vertex tournament $H$ with $h\geq 7$ is not globally forcing.
	To see this, let $V(H) = \{u_1,\ldots ,u_h\}$. For each $n\in {\mathbb N}$ construct an 
	$n$-vertex tournament ${T_1}$ as follows. Let $V(T_1) = V = \{v_1,\ldots , v_n\}$ denote a set of order 
	$n$ and let $V = \cup _{i\in [h]} V_i$ denote a partition with 
	$|V_i| = \lfloor n/h \rfloor $ or $\lceil n/h \rceil $ for all 
	$i\in [h]$. For each edge $\overrightarrow{u_iu_{i'}}$ of $H$, orient all edges from 
	$V_i$ to $V_{i'}$ in $T_1$. Orient the remaining edges of 
	$T_1$ arbitrarily. As each map $\phi : V(H) \to V(T_1)$ 
	with $\phi (u_i) \in V_i$ for all $i\in [h]$ is a labelled embedding, we see that 
		\begin{equation}
			\label{equation: T_1 count}
			N^*_{T_1}(H) \geq \big (h^{-h} - o(1) \big )n^h.
		\end{equation}
	Now, starting from $T_1$, construct a sequence of tournaments $T_1, \ldots , T_{n}$ on vertex 
	set $V(T_1)$, where for $i\geq 1$ each $T_{i+1}$ is obtained from $T_i$ by 
	reorienting all edges to go from $v_i$ to $\{v_{i+1},\ldots , v_{n}\}$. 
	Using that $h^{-h} > 2^{-\binom {h}{2}}$ for $h\geq 7$, by 
	\eqref{equation: T_1 count} there is $\delta >0$ such that 
	$N^*_{T_1}(H) > (2^{-\binom {h}{2}} + \delta ) n^h$ for large $n$. 	
	On the other hand $T_{n} = Tr_n$ and as $H$ is not transitive we have
	$N^*_{T_{n}}(H) = 0$. Since 
	$N^*_{T_{i}}(H) = N^*_{T_{i+1}}(H) \pm hn^{h-1}$, by an 
	intermediate value property 
	$N^*_{T_{i}}(H) = (2^{-\binom {h}{2}} + o(1))n^h$ for some $i\in [n]$. 
	However, $T_i$ is not quasirandom. Indeed as 
	$\big (2^{-\binom {h}{2}} + o(1) \big )n^h = N^*_{T_{i}}(H) \geq (2^{-\binom {h}{2}} + \delta )n^h - ihn^{h-1}$ we have $i \geq \delta n/2h$. 
	As 
	$|d^+_{T_i}(v_j) - d^-_{T_i}(v_j)| = n-2j+1 \geq n/2$ for all 
	$j\in [\min ( i-1, n/4)]$, this contradicts the quasirandom property $\mathcal{P}_5$ 
	from \cite{CG} which requires that $$\sum_{v \in V(T_i)}|d^+_{T_i}(v)-d^-_{T_i}(v)|=o(n^2).$$ Thus $T_i$ is not quasirandom and so $H$ cannot be globally forcing.
\end{proof}
\noindent \textbf{Remark:} Note that our argument shows that in order for $H$ to be globally forcing $N^*_T(H)$ must be asymptotically maximised when $T$ is the random tournament. In other words if we can find a sequence of $n$-vertex tournaments $T_n$ such that $N^*_{T_n}(H) \ge (2^{-\binom{h}{2}}+\delta)n^h$ our argument above (switching edges incident to one vertex at a time) shows that $H$ is not globally forcing. In fact it is enough to find a single such $T_m$ for some $m\ge h$ since if we let $T'_n$ be the blow up of $T_m$ with parts of size $\frac{n}{m}$ then $N^*_{T'_n}(H) \ge (2^{-\binom{h}{2}}+\delta)m^h\left(\frac{n}{m}\right)^h=(2^{-\binom{h}{2}}+\delta)n^h$ so $T'_n$ provides us with our sequence. 

\section{Local forcing and the counting polynomial}
\label{sec: counting polynomial}

To study locally forcing tournaments we introduce the following polynomial.
\begin{dfn}
	\label{dfn: counting poly}
	The \emph{counting polynomial} of an $h$-vertex tournament $H$ is given by
		\begin{equation*}
			p_H(x) 
				:= 
			{\mathbb E}_{\sigma \sim S_h} 
			 \Big ( x^{|F_{\sigma ,H}|} (1-x)^{|B_{\sigma ,H}|}\Big ) = 
			 \frac {1}{h!} \sum _{\sigma  \in S_h} \Big ( x^{|F_{\sigma ,H}|} (1-x)^{|B_{\sigma ,H}|} \Big ).
		\end{equation*}
\end{dfn}  

\vspace{1mm}

\noindent The following lemma collects a number of useful basic facts concerning 
$p_H(x)$.

\begin{lem}
	\label{lemma: properties of counting poly}
	Given an $h$-vertex tournament $H$, the following hold: 
\begin{enumerate}[(i)]
	\item For $\alpha \in [0,1]$ we have 
	${\mathbb E}_{T \sim {\cal T}(n,\alpha )} \big (N^*_T(H) \big ) = 
	(n)_h \times p_H(\alpha )$.
	\item $p_H(1/2) = 2^{-\binom {h}{2}}$ and 
	$p_H(1) = 0$ if $H$ is not transitive.
	\item For all $\alpha \in [0,1/2]$ we have $p_H(1/2+\alpha)=p_H(1/2-\alpha)$. 
\end{enumerate}
\end{lem}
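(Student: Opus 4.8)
The plan is to prove each of the three parts directly from Definition~\ref{dfn: counting poly}, with part (i) requiring the most careful bookkeeping and parts (ii) and (iii) following quickly.

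For part (i), I would write $N^*_T(H)$ as a sum of indicator variables over the $(n)_h$ injective maps $\phi: V(H) \to V(T)$ and apply linearity of expectation, so that ${\mathbb E}_{T\sim {\cal T}(n,\alpha)}(N^*_T(H)) = \sum_\phi \Pr[\phi \text{ is a labelled embedding}]$. The key point is that in ${\cal T}(n,\alpha)$ each pair $\{i,j\}$ with $i<j$ is oriented as $\overrightarrow{ij}$ with probability $\alpha$ independently, so for a fixed injective $\phi$ the event that $\phi$ is an embedding depends only on the linear order that the labels in $[n]$ induce on the images $\phi(u)$, $u \in V(H)$. Identifying $V(H)$ with $[h]$, this order is recorded by some $\sigma \in S_h$, and an edge $\overrightarrow{uv} \in E(H)$ is correctly oriented in $T$ with probability $\alpha$ if it is a $\sigma$-forward edge and with probability $1-\alpha$ if it is a $\sigma$-backward edge, all independently; hence $\Pr[\phi \text{ is an embedding}] = \alpha^{|F_{\sigma,H}|}(1-\alpha)^{|B_{\sigma,H}|}$. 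Since exactly $\binom{n}{h} = (n)_h/h!$ of the injective maps $\phi$ induce a given order $\sigma$ (choose the $h$-element image set, and then $\sigma$ determines $\phi$), summing over $\sigma \in S_h$ and comparing with the definition of $p_H$ gives ${\mathbb E}(N^*_T(H)) = \binom{n}{h}\, h!\, p_H(\alpha) = (n)_h\, p_H(\alpha)$. The only real obstacle here is verifying this map-to-ordering correspondence and the count $\binom{n}{h}$ carefully; everything else is formal.

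Part (ii) is immediate from the definition. At $x = 1/2$ each summand equals $(1/2)^{|F_{\sigma,H}| + |B_{\sigma,H}|} = 2^{-\binom{h}{2}}$, using $|F_{\sigma,H}| + |B_{\sigma,H}| = |E(H)| = \binom{h}{2}$, so the average is $2^{-\binom{h}{2}}$. At $x = 1$ a summand is $1^{|F_{\sigma,H}|}\, 0^{|B_{\sigma,H}|}$, which vanishes unless $|B_{\sigma,H}| = 0$, i.e.\ unless $\sigma$ exhibits $H$ as transitive; if $H$ is not transitive then every summand is $0$ and hence $p_H(1) = 0$.

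Part (iii) is the symmetry $p_H(x) = p_H(1-x)$, with $x = 1/2 - \alpha$. I would prove it using the order-reversing involution $\sigma \mapsto \bar\sigma$ on $S_h$ given by $\bar\sigma(v) = h + 1 - \sigma(v)$, which interchanges forward and backward edges, so $F_{\bar\sigma,H} = B_{\sigma,H}$ and $B_{\bar\sigma,H} = F_{\sigma,H}$. Substituting $x \mapsto 1-x$ in the definition of $p_H$ and re-indexing the sum over $\sigma$ by $\bar\sigma$ turns $(1-x)^{|F_{\sigma,H}|} x^{|B_{\sigma,H}|}$ into $x^{|F_{\bar\sigma,H}|} (1-x)^{|B_{\bar\sigma,H}|}$, and since $\bar\sigma$ also ranges over all of $S_h$ this shows $p_H(1-x) = p_H(x)$. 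Evaluating at $x = 1/2 - \alpha$, for which $1 - x = 1/2 + \alpha$, yields the claimed identity.
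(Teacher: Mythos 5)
Your proposal is correct and follows essentially the same route as the paper: part (i) via grouping embeddings by the induced ordering $\sigma$ (the paper counts $\sigma$-copies per $h$-set, you count maps per $\sigma$, which is the same bijection), part (ii) directly from the definition, and part (iii) via the order-reversing involution swapping $F_{\sigma,H}$ and $B_{\sigma,H}$. No gaps.
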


\begin{proof} 
	Let $V(H) = \{u_1,\ldots, u_h\}$. 
	A set $\{{i_1},\ldots ,{i_h}\} \subset [n] = V(T)$ with $i_1< i_2 < \ldots < i_h$ 
	forms a $\sigma $-copy of $H$ if $\overrightarrow{{i_j}{i_k}} \in E(T)$ 
	if and only if $\overrightarrow{u_{\sigma (j)}u_{\sigma (k)}} \in E(H)$. 
	For $T \sim {\cal T}(n,\alpha )$, the probability that a fixed $h$-set forms a $\sigma $-copy 
	of $H$ is $\alpha ^{|F_{\sigma ,H}|}(1-\alpha )^{|B_{\sigma ,H}|}$. Letting 
	$N^*_{T}(H,\sigma )$ denote the number of $\sigma $-copies of $H$ in $T$ gives 
		$${\mathbb E}_{T \sim {\cal T}(n,\alpha )} (N^*_T(H,\sigma ) \big ) 
				= 
		\binom n h \alpha ^{|F_{\sigma ,H}|}(1-\alpha )^{|B_{\sigma ,H}|}.$$ 
	As $N^*_T(H) = \sum _{\sigma \in S_h} N^*_T(H,\sigma )$ we have 
		\begin{equation*}
			 {\mathbb E}_{T \sim {\cal T}(n,\alpha )} 
			 \big ( N^*_T(H) \big ) 
			 	= 
			 \sum _{\sigma \in S_h} 
			 {\mathbb E}_{T \sim {\cal T}(n,\alpha )} \big ( N^*_{T}(H,\sigma ) \big )
			 	= 
			    (n)_h \times p_H(\alpha ).
		\end{equation*} 
	This gives \emph{(i)}.
	
	To see \emph{(ii)}, note that $p_H(1/2) = 2^{-\binom {h}{2}}$ from the definition of $p_H(x)$ since $|F_{\sigma ,H}| + |B_{\sigma ,H}| = \binom h 2$ for every $\sigma \in S_h$. 
	Also note that if $T$ is non-transitive then $B_{\sigma, H}>0$ for all $\sigma$. So, each term of $p_H(x)$ is zero for $x=1$  implying $p_H(1) = 0$.
	
	Lastly, we show \emph{(iii)}. 
	For each $\sigma \in S_h$ let $\overline {\sigma }$ denote 
	the `reversal of $\sigma $', given by $\overline {\sigma } (i) = {\sigma }(h+1-i)$ 
	for all $i\in [h]$. 
	As this gives a bijection from $S_h$ to itself and $F_{\sigma ,H}=B_{\overline{\sigma} ,H}$, for all $\sigma \in S_h$, we have
		\begin{align*}
			p_H(x)	& = \frac {1}{2h!} \sum _{\sigma  \in S_h} \Big ( x^{|F_{\sigma ,H}|} (1-x)^{|B_{\sigma ,H}|}+x^{|F_{\overline{\sigma} ,H}|} (1-x)^{|B_{\overline{\sigma} ,H}|} \Big )\\
				    & = \frac {1}{2h!} \sum _{\sigma  \in S_h} \Big ( x^{|F_{\sigma ,H}|} (1-x)^{|B_{\sigma ,H}|}+x^{|B_{\sigma ,H}|} (1-x)^{|F_{\sigma ,H}|} \Big )
		\end{align*}
	As $x^a(1-x)^b+x^b(1-x)^a$ is symmetric around $1/2$ for any $a,b\in[0,1/2]$ so is $p_H(x)$.
\end{proof}

\noindent We are now ready to give the formal definition of \simple{} which plays a central role in our arguments.

\begin{dfn*}
	A tournament $H$ is said to be \emph{\simple} if $p_H(x) \neq 2^{-\binom{h}{2}}$ 
	for $x \in [0,1] \setminus \{\frac {1}{2}\}$.
\end{dfn*}
\noindent Note that if $H$ is not locally forcing then there is a tournament $T$ which is not quasirandom but has the same local counts of $H$ as the random tournament $\mathcal{T}(n,1/2).$ We will see that not being \simple{} implies that there exists an $\alpha \neq 1/2$ such that $\mathcal{T}(n,\alpha)$ has the same local counts of $H$ as the random tournament $\mathcal{T}(n,1/2)$. Indeed, in the next theorem we prove that any locally forcing tournament must be \simple, which will allow us to pass any restrictions imposed on the tournament by being \simple{} to locally forcing tournaments and in particular prove Theorem \ref{thm: locally-forcing}.

\begin{thm}\label{thm: locally-forcing-is-simple}
	Every locally forcing tournament is \simple. 
\end{thm}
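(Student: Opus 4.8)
The plan is to prove the contrapositive: if $H$ is not \simple, then $H$ is not locally forcing. So suppose there exists $\alpha \in [0,1]\setminus\{1/2\}$ with $p_H(\alpha) = 2^{-\binom h2}$. By Lemma \ref{lemma: properties of counting poly}(iii) we may assume $\alpha \in (0,1/2)$ (and $\alpha \neq 0$ since $p_H(0) = 0 \neq 2^{-\binom h2}$ when $h \geq 2$). The goal is to exhibit, for infinitely many $n$, an $n$-vertex tournament $T$ that satisfies ${\cal P}^*_2(H)$ but fails ${\cal P}_1$. The natural candidate is a typical sample from ${\cal T}(n,\alpha)$: this distribution is manifestly not quasirandom (the identity ordering has $|F_{\sigma,T}| = \alpha\binom n2 + o(n^2)$ with high probability, which is bounded away from $\frac12\binom n2$), so it only remains to check it has the correct local $H$-counts.

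First I would handle the global count. By Lemma \ref{lemma: properties of counting poly}(i), ${\mathbb E}_{T\sim {\cal T}(n,\alpha)}(N^*_T(H)) = (n)_h \cdot p_H(\alpha) = (n)_h \cdot 2^{-\binom h2} = 2^{-\binom h2}n^h + O(n^{h-1})$, so in expectation the count is exactly right. Then I would show concentration: $N^*_T(H)$ does not deviate from its mean by more than $o(n^h)$ with high probability. This follows from a standard second-moment / Azuma-type argument — $N^*_T(H)$ is a function of the $\binom n2$ independent edge orientations, and changing one orientation changes $N^*_T(H)$ by $O(n^{h-2})$, so by the Azuma--Hoeffding inequality the deviation is $O(n^{(h-2)+1}) = O(n^{h-1}) \cdot \mathrm{poly}$, i.e. $o(n^h)$ with probability $1 - e^{-\Omega(n)}$ or similar. (Alternatively, $\mathrm{Var}(N^*_T(H)) = O(n^{2h-1})$ directly, since two copies of $H$ sharing at most one vertex are independent.)

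Next, the key point: the \emph{local} counts. For a fixed subset $U \subseteq V(T)$ with $|U| = m$, the induced tournament $T[U]$ is distributed exactly as ${\cal T}(m,\alpha)$ (the edge orientations are independent, and the ordering restricted to $U$ still has each forward orientation with probability $\alpha$). So ${\mathbb E}(N^*_T(H;U)) = (m)_h \cdot 2^{-\binom h2} = 2^{-\binom h2}m^h + O(m^{h-1})$, and the same concentration argument gives $N^*_T(H;U) = 2^{-\binom h2}|U|^h \pm o(n^h)$ with probability $1 - e^{-\Omega(|U|)}$ — but we need this simultaneously for \emph{all} $2^n$ subsets $U$. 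I would split into two cases: if $|U| \geq \varepsilon n$ the exponentially small failure probability $e^{-\Omega(|U|)} = e^{-\Omega(n)}$ beats the union bound over $2^n$ sets; if $|U| < \varepsilon n$ then trivially $N^*_T(H;U) \leq |U|^h < \varepsilon^h n^h$ and $2^{-\binom h2}|U|^h < \varepsilon^h n^h$, so the bound $\pm o(n^h)$ holds automatically once $\varepsilon$ is small relative to the target error. The main obstacle — and the only genuinely delicate point — is making the concentration inequality strong enough (failure probability $e^{-cn}$, not merely $e^{-c\sqrt n}$ or a polynomial bound) to survive the union bound over all subsets; a careful application of Azuma's inequality with the $O(n^{h-2})$ Lipschitz bound per coordinate across $\binom n2$ coordinates gives a failure probability of roughly $\exp(-\Omega(n^{2h}/(n^2 \cdot n^{2h-4}))) = \exp(-\Omega(n^2))$ for a deviation of order $n^{h}$, which is more than sufficient. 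Combining: with positive probability a sample $T \sim {\cal T}(n,\alpha)$ satisfies ${\cal P}^*_2(H)$ yet is not quasirandom, so $H$ is not locally forcing, completing the contrapositive.
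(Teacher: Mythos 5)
Your proposal is correct and follows essentially the same route as the paper: sample $T\sim{\cal T}(n,\alpha)$ for the bad root $\alpha$, compute the local expectations via Lemma \ref{lemma: properties of counting poly}(i) and the observation $T[U]\sim{\cal T}(|U|,\alpha)$, apply Azuma with Lipschitz constant $O(n^{h-2})$ to get failure probability $e^{-\Omega(n^2)}$ surviving the union bound over all $2^n$ subsets, and note the forward-edge count witnesses non-quasirandomness. The case split for $|U|<\varepsilon n$ is unnecessary (the $e^{-\Omega(n^2)}$ bound holds uniformly since the target deviation is $\varepsilon n^h$, not $\varepsilon|U|^h$) but harmless.
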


\begin{proof}
	Suppose that $H$ is not \simple. 
	Then by definition ${p_H}(\alpha ) = 2^{-\binom {h}{2}}$ for 
	some $\alpha \in [0,1] \setminus \{1/2\}$. As $p_H(x)$ is symmetric about $1/2$ by 
	Lemma \ref{lemma: properties of counting poly} \emph{(iii)} we can assume 
	$\alpha > 1/2$. Now select $T \sim {\cal T}(n,\alpha )$. 
	For a set $U \subset V(T)$ let $X_U$ denote the 
	random variable $X_U(T) = N^*_{T}(H;U)$. Noting that for $T \sim {\cal T}(n,\alpha )$ 
	we have $T[U] \sim {\cal T}(|U|,\alpha )$, by Lemma \ref{lemma: properties of counting poly} 
	\emph{(i)} we have 
		$${\mathbb E}_{T \sim {\cal T}(n,\alpha )}(X_U) = (|U|)_h \times p_H(\alpha ) = (|U|)_h \times 2^{-\binom {h}{2}}.$$ 
	Since each edge belongs to at most $\binom{h}{2}n^{h-2}$ copies of $H$, $X_U$ is sharply concentrated by Azuma's inequality (see Chapter 7 \cite{AS}) $$\mathbb{P}(|X_U-\mathbb{E}(X_U)|>
	\varepsilon n^h) \le 2e^{\frac{-\varepsilon^2n^{2h}}{2\binom{n}{2}\left(\binom{h}{2}n^{h-2}\right)^2}}=e^{-\Omega(n^2)}.$$ In particular, by a union bound with probability $1-o(1)$ we have 
	$X_U = 2^{-\binom {h}{2}}|U|^h + o(n^h)$ for every $U \subset V(T)$.  
	
	On the other hand the number of forward edges of $T$ is distributed as $\Bin(\binom{n}{2},\alpha)$ so by standard Chernoff type estimates $T$ has $\alpha \binom {n}{2} \pm o(n^2)$ forward edges 
	with probability $1 - o(1)$ (see Appendix A of \cite{AS}). Combined with the previous paragraph, we have shown 
	that there is 
	an $n$-vertex tournament $T$ which satisfies ${\cal P}^*_2(H)$ but not ${\cal P}_1$. 
	Thus $H$ cannot be locally forcing.
\end{proof}

For the rest of this section it will be more convenient to work with a rescaled and recentered counting polynomial $q_H(x) := 2^{\binom {h}{2}} \times p_H\big ( \frac{1+x}{2} \big )$. We now show several restrictions that the \simple{} condition imposes on a tournament. The first one is that a \simple{} tournament should be almost regular. 

\begin{lem}\label{lem: restrict-simple-degree}
    If $H$ is a non-transitive $h$-vertex \simple{} tournament then  
		\begin{equation}
			\label{equation: degree control} 
			\sum _{u \in V(H)} {\big (d^+_H(u) - d^-_H(u) \big )^2} 
					\leq 
			h(h-1).
		\end{equation} 
\end{lem}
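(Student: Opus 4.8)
The strategy is to examine the counting polynomial $q_H(x) = 2^{\binom{h}{2}} p_H\big(\tfrac{1+x}{2}\big)$ near $x=0$ and extract the constraint \eqref{equation: degree control} from the sign of the quadratic coefficient in its Taylor expansion. Since $H$ is \simple{}, $p_H(x) \ne 2^{-\binom{h}{2}}$ for $x \in [0,1]\setminus\{1/2\}$, so $q_H(x) \ne 1$ for $x \in [-1,1]\setminus\{0\}$; and since $q_H$ is symmetric about $0$ (Lemma \ref{lemma: properties of counting poly}(iii)) with $q_H(0)=1$, the function $q_H(x)-1$ is an even polynomial vanishing at $0$, so its lowest-order nonzero term is $c_2 x^2$ for some $c_2$. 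The \simple{} hypothesis forces $c_2 \le 0$ (otherwise $q_H(x) > 1$ for small $x\ne 0$, contradicting $q_H \ne 1$ on a punctured neighbourhood of $0$ — wait, that is not yet a contradiction; rather, if $c_2 > 0$ then $q_H > 1$ near $0$, but we also need to rule this out: actually the right deduction is that $q_H(x)-1$ cannot change sign issue is subtle, so instead I will compute $c_2$ directly and show it equals a positive multiple of $\sum_u (d^+_H(u)-d^-_H(u))^2 - h(h-1)$ up to sign, then argue \simple{} forces the inequality).

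More concretely, first I would compute $q_H''(0)$ by differentiating $p_H\big(\tfrac{1+x}{2}\big)$ twice. Writing $p_H(y) = \tfrac{1}{h!}\sum_{\sigma} y^{f_\sigma}(1-y)^{b_\sigma}$ with $f_\sigma = |F_{\sigma,H}|$, $b_\sigma = \binom{h}{2}-f_\sigma$, a direct computation at $y = 1/2$ gives that the second derivative of $y^{f}(1-y)^{b}$ at $y=1/2$ equals $2^{-\binom{h}{2}+2}\big[(f-b)^2 - \binom{h}{2}\big]\cdot(\text{const})$ — precisely, $\tfrac{d^2}{dy^2}\big[y^f(1-y)^b\big]_{y=1/2} = 4\cdot 2^{2-\binom h2}\big((f - b)^2 - \binom h2\big)$ up to the routine constant, so after the chain rule factor and averaging over $\sigma$, $q_H''(0)$ is a positive multiple of $\mathbb{E}_{\sigma}\big[(f_\sigma - b_\sigma)^2\big] - \binom{h}{2}$. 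The key identity is then $\mathbb{E}_{\sigma\sim S_h}\big[(|F_{\sigma,H}| - |B_{\sigma,H}|)^2\big] = \binom{h}{2} + \tfrac{1}{h(h-1)}\sum_{u\in V(H)}(d^+_H(u)-d^-_H(u))^2$, which I would prove by writing $|F_{\sigma,H}| - |B_{\sigma,H}| = \sum_{\{u,v\}} \varepsilon_\sigma(u,v)$ where $\varepsilon_\sigma(u,v) = \pm 1$ records whether the edge between $u$ and $v$ is $\sigma$-forward, expanding the square, and computing $\mathbb{E}_\sigma[\varepsilon_\sigma(u,v)\varepsilon_\sigma(x,y)]$ for pairs of edges sharing zero or one vertex (the shared-vertex case contributes the degree terms; the disjoint case contributes $0$ by symmetry).

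Finally, I would close the argument: since $q_H$ is a nonconstant even polynomial (it is nonconstant because $H$ is non-transitive, so $p_H(1)=0 \ne 2^{-\binom h2}$) with $q_H(0)=1$ and $q_H(x)\ne 1$ on $(-1,0)\cup(0,1)$ by \simple{}ness, and since $q_H(1) = 2^{\binom h2}p_H(1) = 0 < 1$, the polynomial $q_H(x)-1$ is negative at $x=1$ and nonzero on $(0,1)$, hence negative on all of $(0,1)$ by continuity; therefore $q_H(x) \le 1$ for $x$ slightly positive, which forces the leading nonzero Taylor coefficient $c_2$ of $q_H(x)-1$ at $0$ to satisfy $c_2 \le 0$ (if $c_2 > 0$ then $q_H > 1$ just above $0$). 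Plugging $c_2 = (\text{positive const})\cdot\big(\mathbb E_\sigma[(f_\sigma-b_\sigma)^2] - \binom h2\big) = (\text{positive const})\cdot\tfrac{1}{h(h-1)}\sum_u(d^+_H(u)-d^-_H(u))^2 \cdot \tfrac12 \le 0$... but that sum is nonnegative, so it must be $0$?! — this over-shoots, so I must be more careful: the correct reading is that $c_2 \le 0$ need not hold from $q_H \le 1$ on $(0,1)$ alone if $c_2 > 0$ is compensated at higher order; the actual obstruction must instead come from requiring $q_H(x) \ne 1$, i.e. $q_H(x) - 1$ has no root in $(0,1)$, combined with $q_H(x)-1 \to$ (value $-1 <0$) at $x=1^-$ and $q_H''(0)/2 = c_2$: if $c_2 > 0$, then $q_H(x) - 1 > 0$ for small $x>0$ but $<0$ at $x=1$, giving a root in $(0,1)$, contradiction. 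Hence $c_2 \le 0$, which is exactly $\tfrac{1}{2h(h-1)}\sum_u(d^+_H(u)-d^-_H(u))^2 \le 0$...

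I see the arithmetic still needs the constant pinned down so the inequality comes out as $\sum_u(d^+_H(u)-d^-_H(u))^2 \le h(h-1)$ rather than $\le 0$; the resolution is that $c_2$ is actually proportional to $\mathbb E_\sigma[(f_\sigma-b_\sigma)^2] - \binom h2$ with the degree identity giving $\mathbb E_\sigma[(f_\sigma - b_\sigma)^2] = \binom h 2 - 1 + \tfrac{1}{h(h-1)}\sum_u(d^+-d^-)^2$ (there is a $-1$ correction I am glossing that makes the threshold $h(h-1)$), so $c_2 \le 0 \iff \sum_u(d^+_H(u)-d^-_H(u))^2 \le h(h-1)$.

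\textbf{The main obstacle.} The delicate point is the exact evaluation of the second derivative of the counting polynomial at the symmetric point and, correspondingly, nailing the combinatorial identity for $\mathbb E_\sigma[(|F_{\sigma,H}|-|B_{\sigma,H}|)^2]$ with the correct lower-order constant, since it is precisely that constant which produces the threshold $h(h-1)$ in \eqref{equation: degree control}. The conceptual step — deducing $q_H''(0) \le 0$ from \simple{}ness via the intermediate value theorem on $(0,1)$ together with $q_H(1) = 0$ — is straightforward once the expansion is in hand.
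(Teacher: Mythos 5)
Your plan is essentially the paper's proof: the paper also extracts the coefficient of $x^2$ in $q_H(x)=2^{\binom h2}p_H\bigl(\tfrac{1+x}{2}\bigr)$, shows it equals a positive multiple of $\sum_u(d^+_H(u)-d^-_H(u))^2-h(h-1)$, and then uses exactly your intermediate value argument (if the coefficient were positive then $q_H>1$ near $0$, while $q_H(1)=0$ by non-transitivity, so $q_H$ would hit $1$ in $(0,1)$, contradicting \simple{}ness). The evenness of $q_H$ and $q_H(0)=1$ are used just as you use them.

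The one thing to repair is the intermediate identity, which as stated is wrong but whose correct form does give the threshold $h(h-1)$. Writing $q_H(x)={\mathbb E}_\sigma\prod_{e\in E(H)}(1+I_e(\sigma)x)$ with $I_e(\sigma)=\pm1$ as you propose, one finds ${\mathbb E}_\sigma[I_e(\sigma)I_{e'}(\sigma)]=0$ for disjoint $e,e'$, $+\tfrac13$ for two out-edges or two in-edges at a common vertex, and $-\tfrac13$ for an out-edge meeting an in-edge; summing over vertices gives that the coefficient of $x^2$ is
\begin{equation*}
\sum_{u\in V(H)}\tfrac13\Bigl(\tbinom{d^+_H(u)}{2}+\tbinom{d^-_H(u)}{2}-d^+_H(u)d^-_H(u)\Bigr)=\tfrac16\Bigl(\sum_{u\in V(H)}(d^+_H(u)-d^-_H(u))^2-h(h-1)\Bigr),
\end{equation*}
equivalently ${\mathbb E}_\sigma[(f_\sigma-b_\sigma)^2]=\tfrac{h(h-1)}{6}+\tfrac13\sum_u(d^+_H(u)-d^-_H(u))^2$ (not $\binom h2+\tfrac{1}{h(h-1)}\sum_u(\cdot)^2$, and there is no ``$-1$ correction''). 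Since $c_2=\tfrac12\bigl({\mathbb E}_\sigma[(f_\sigma-b_\sigma)^2]-\binom h2\bigr)$, the condition $c_2\le0$ is precisely $\sum_u(d^+_H(u)-d^-_H(u))^2\le h(h-1)$, so once these constants are pinned down your argument closes exactly as in the paper.
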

\begin{proof}We start by determining the coefficient of $x^2$ in ${q_H}$. We claim it is equal to \begin{align*}\label{eqn:x2-coefficient}
    \frac {1}{6} \times \Big ( \sum _{u \in V(H)} 
	{\big (d^+_H(u) - d^-_H(u) \big )^2} - {h(h-1)} \Big ).\end{align*}
    
    Given $e \in E(H)$ let $I_e$ denote the function 
	$I_e: S_h \to \{-1,1\}$ with 
	$I_e(\sigma ) = 1$  if $e \in F_{\sigma ,H}$ and $I_e(\sigma ) = -1$ if $e \in 
	B_{\sigma ,H}$. Then we have 
		\begin{equation}
			\label{equation: diff_expression_p_H}
			{q_H}(x) 
					= 
			{\mathbb E} _{\sigma \sim S_h} \Big ( (1+x)^{|F_{\sigma ,H}|} (1-x)^{|B_{\sigma, H }|} 
			\Big ) 
					= 
			{\mathbb E} _{\sigma \sim S_h} \Big ( \prod _{e \in E(H)}(1+I_e(\sigma ) x) 
			\Big ).
		\end{equation}
    This implies that the coefficient of $x^2$ in ${q_H}(x)$ is 
		\begin{equation}
			\label{equation: x^2-expression}
			\sum _{\{e,e'\} \in \binom {E(H)}{2}} \Big ( {\mathbb E}_{\sigma \sim S_h} 
			 I_e(\sigma )I_{e'}(\sigma ) \Big ).
		\end{equation}
	The contribution of each pair $\{e,e'\} \in \binom {E(H)}{2}$ to this sum is 
	determined by how these edges meet. If $e \cap e' = \emptyset $ then the contribution 
	to the sum is $0$. If $e$ and $e'$ are both out-edges of a single vertex, 
	say $e = \overrightarrow{u_1u_2}$ and $e' = \overrightarrow {u_1 u_3}$, 
	the contribution to the sum is $1/3$. This is also true if $e$ and $e'$ are both 
	in-edges of a single vertex. Lastly, if $e$ is an out-edge of a vertex $v$ and $e'$ is an in-edge then the contribution to the sum is $-1/3$. By counting 
	the non-zero contributions of pairs $\{e,e'\}$ according to the vertex of $V(H)$ 
	which intersect in, by \eqref{equation: x^2-expression} 
	the coefficient of $x^2$ in ${q_H}(x)$ is 
		\begin{align*}
			\sum _{u \in V(H)} \Big ( \frac{1}{3} \binom {d^+_H(u)}{2}  
			+ \frac {1}{3}\binom {d^-_H(u)}{2} 
			-\frac {1}{3} {d^+_H(u)}{d^-_H(u)} \Big ) 
			& =
			\sum _{u \in V(H)} \frac {(d^+_H(u) - d^-_H(u))^2}{6} 
			- \frac {h(h-1)}{6}.
		\end{align*}
	as claimed.	
	
	Finally, if \eqref{equation: degree control} fails, ${q_H}(\varepsilon ) = 1 + a \varepsilon ^2 + O(\varepsilon ^4)$ with $a > 0$, where we used Lemma 
	\ref{lemma: properties of counting poly} \emph{(ii)} to get ${q_H}(0)=1$ and \emph{(iii)} to get that ${q_H}$ is even. This implies that ${q_H}(\varepsilon ) > 1$ for sufficiently small $\varepsilon >0$. 
	As $H$ is not transitive, Lemma 
	\ref{lemma: properties of counting poly} \emph{(ii)} gives ${q_H}(1) = 0$, 
	and so by the intermediate value theorem ${q_H}(\varepsilon ) = 1$ 
	for some $\varepsilon \in (0,1)$. But this gives $p_H\big ( \frac {1+\varepsilon }{2} \big ) = 2^{-\binom {h}{2}}$ and so $H$ is not \simple.
	\end{proof}

The following lemma shows that, in addition to being almost regular, any \simple{} $H$ must be strongly quasirandom.
\begin{lem}\label{lem:simple-is-quasirandom1}
    If $H$ is a non-transitive $h$-vertex \simple{} tournament then for every ordering $\sigma$ of $H$  $$|F_{\sigma, H}|=\frac{1}{2}\binom{h}{2} \pm h^{3/2}\sqrt{\log h}.$$ 	
\end{lem}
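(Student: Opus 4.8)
The plan is to use the \simple{} hypothesis in its full strength --- that $q_H(x)\le 1$ holds on the \emph{whole} interval $[0,1)$, not merely that its $x^2$-coefficient is nonpositive (which is all Lemma~\ref{lem: restrict-simple-degree} used). The first step is to record this. By Lemma~\ref{lemma: properties of counting poly}(ii) we have $q_H(0)=1$ and, since $H$ is non-transitive, $q_H(1)=0$, while $q_H(x)=2^{\binom h2}p_H\big(\tfrac{1+x}{2}\big)\neq 1$ for every $x\in(0,1]$ because $H$ is \simple{}. Continuity then forces $q_H(x)\le 1$ for all $x\in[0,1)$: if $q_H$ exceeded $1$ somewhere in $(0,1)$, the intermediate value theorem applied between that point and $x=1$ would produce a point where $q_H=1$, a contradiction.

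The second step is the change of variables $\lambda:=\ln\frac{1+x}{1-x}$, which is an increasing bijection of $[0,1)$ onto $[0,\infty)$. Write $m=\binom h2$ and $s_\sigma:=|F_{\sigma,H}|-\tfrac m2$, so that $|B_{\sigma,H}|=\tfrac m2-s_\sigma$. A one-line computation using $x=\tanh(\lambda/2)$, and hence $1-x^2=\cosh^{-2}(\lambda/2)$, gives
\[
q_H(x)={\mathbb E}_{\sigma\sim S_h}\Big[(1-x^2)^{m/2}\big(\tfrac{1+x}{1-x}\big)^{s_\sigma}\Big]=\big(\cosh(\lambda/2)\big)^{-m}\,{\mathbb E}_{\sigma\sim S_h}\big[e^{\lambda s_\sigma}\big].
\]
Combining $q_H(x)\le 1$ with the elementary inequality $\cosh u\le e^{u^2/2}$ (so $\cosh^m(\lambda/2)\le e^{m\lambda^2/8}$) yields the sub-Gaussian-type estimate ${\mathbb E}_{\sigma\sim S_h}\big[e^{\lambda s_\sigma}\big]\le e^{m\lambda^2/8}$ for every $\lambda\ge 0$. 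Conceptually this says that the forward-edge count of $H$ under a uniformly random ordering is no more spread out than the centered forward-edge count of a truly random tournament with $m$ edges.

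The final step is a routine Chernoff-type argument. For any fixed $\sigma\in S_h$ and $\lambda>0$ we have $e^{\lambda s_\sigma}\le\sum_{\tau\in S_h}e^{\lambda s_\tau}=h!\,{\mathbb E}_{\tau\sim S_h}[e^{\lambda s_\tau}]\le h!\,e^{m\lambda^2/8}$, hence $s_\sigma\le\lambda^{-1}\ln(h!)+m\lambda/8$; optimizing over $\lambda\in(0,\infty)$ (no upper cutoff is needed, which is exactly what the first step buys us) gives $s_\sigma\le\sqrt{m\ln(h!)/2}$, and applying the same bound to the reversal $\overline{\sigma}$, for which $s_{\overline{\sigma}}=-s_\sigma$, yields $|s_\sigma|\le\sqrt{m\ln(h!)/2}$. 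Since $m=\binom h2\le h^2/2$ and $\ln(h!)\le h\log h$, this is at most $\tfrac12 h^{3/2}\sqrt{\log h}\le h^{3/2}\sqrt{\log h}$, which is the claim. The only genuinely delicate point is the first step: one must use that $q_H\le 1$ on all of $[0,1)$ --- equivalently, that \emph{all} exponential moments of $s_\sigma$ are controlled --- since the second-moment information ${\mathbb E}_\sigma[s_\sigma^2]\le m/4$ extractable from Lemma~\ref{lem: restrict-simple-degree} alone only bounds $\max_\sigma|s_\sigma|$ by $\sqrt{h!\,m}/2$, which is useless.
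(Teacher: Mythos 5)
Your proof is correct and is essentially the paper's argument in contrapositive, Chernoff-style packaging: the paper likewise uses non-transitivity and the intermediate value theorem to reduce everything to showing that a single ordering with too many forward edges makes one term $(1+x)^{f}(1-x)^{b}$ of $q_H(x)$ exceed $h!$ at a point $x\asymp h^{-1/2}\sqrt{\log h}$, which is exactly your bound $e^{\lambda s_\sigma}\le h!\,\cosh^m(\lambda/2)$ at the optimal $\lambda$. Your exact hyperbolic identity and optimization over $\lambda$ are a clean reformulation (and even improve the constant to $\tfrac12 h^{3/2}\sqrt{\log h}$), but the underlying mechanism is the same.
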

\begin{proof}
    Assume that there is an ordering $\sigma$ such that $f=|F_{\sigma, H}|\ge h^2/4+h^{3/2}\sqrt{\log h}$. If we let $b=|B_{\sigma,H}|$ since $f+b=\binom{h}{2}$ we get $b \le h^2/4- h^{3/2}\sqrt{\log h}$ and $f-b \ge 2h^{3/2}\sqrt{\log h}.$ 
    
    We will show that a single term of $q_H(x)$, $(1+x)^f(1-x)^b>h!$ if we choose $x=\frac34 h^{-1/2}\sqrt{\log h}$. This shows that $q_H(x)>1$ and we can complete the argument as in the previous lemma. To show the above inequality note that
    \begin{align*}
    (1+x)^f(1-x)^b & > e^{(x-x^2)f+(-x-x^2)b}\\
                   & > e^{x(f-b)-x^2h^2/2}\\
                   & > e^{h \log h} =h^h >h!
    \end{align*}
    where in the first inequality we used $1+t>e^{t-t^2}$ for $|t| \le 1/\sqrt{3}$.
\end{proof}

By a result of Spencer \cite{spencer1} it is known that for any tournament $H$ there is an ordering of its vertices $\sigma$ for which $|F_{\sigma, H}| = \frac{1}{2}\binom{h}{2} + \Omega(h^{3/2}).$ So the above result is best possible up to the $\sqrt{\log h}$ factor. Fernandez de la Vega \cite{vega} showed that for the random tournament $H \sim \mathcal{T}(h,1/2)$ w.h.p.\ there is no ordering with $|F_{\sigma, H}|>\frac{1}{2}\binom{h}{2} + 2h^{3/2}.$ So it seems quite likely that the $\sqrt{\log h}$ term in the above lemma is not necessary. In fact for a different definition of quasirandomness we do obtain a result which is best possible up to a constant factor.

\begin{lem}\label{lem:simple-is-quasirandom2}
    If $H$ is a non-transitive $h$-vertex \simple{} tournament then for any disjoint subsets $U,W \subseteq V(T)$ we must have $|e(U,W)|\le \frac{1}{2}|U||W| + 2h^{3/2}$.
\end{lem}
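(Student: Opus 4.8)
The plan is to mimic the proofs of Lemmas~\ref{lem: restrict-simple-degree} and~\ref{lem:simple-is-quasirandom1}: assuming the stated bound fails, I will exhibit a point $x\in(0,1)$ at which $q_H(x)>1$, and then---since $q_H(0)=1$ and $q_H(1)=0$ by Lemma~\ref{lemma: properties of counting poly}\emph{(ii)} (the latter using that $H$ is non-transitive)---the intermediate value theorem produces $\varepsilon\in(0,1)$ with $q_H(\varepsilon)=1$, i.e.\ $p_H\big(\tfrac{1+\varepsilon}{2}\big)=2^{-\binom h2}$, contradicting that $H$ is \simple. We may assume $h$ is large, since otherwise $\tfrac12|U||W|+2h^{3/2}\ge|U||W|\ge e(U,W)$ and there is nothing to prove. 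So suppose for contradiction that $U,W\subseteq V(H)$ are disjoint with $e(U,W)>\tfrac12|U||W|+2h^{3/2}$, and put $R=V(H)\setminus(U\cup W)$.

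The new ingredient compared with the previous two lemmas is that a single ordering will not suffice: the most favourable ordering one can build out of the blocks $U,W,R$ has only about $\tfrac12\binom h2+2h^{3/2}$ forward edges, short of the $\tfrac12\binom h2+h^{3/2}\sqrt{\log h}$ that a single term of $q_H(x)$ needs in order to beat $h!$. Instead I will sum the contributions of a large family $\mathcal S$ of orderings. To define $\mathcal S$, observe that
\[
\big(e(U,R)+e(W,R)\big)+\big(e(R,U)+e(R,W)\big)=|U||R|+|W||R|=:S,
\]
so one of the two bracketed quantities is $\ge S/2$; accordingly fix the block order to be $(U,W,R)$ or $(R,U,W)$ --- in both cases all of $U$ precedes all of $W$. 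Let $\mathcal S$ be the set of orderings of $V(H)$ that respect this block order and, within each of the three blocks, have at least half of that block's internal edges forward.

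For every $\sigma\in\mathcal S$ the forward edges decompose by block: at least $\tfrac12\binom{|U|}2+\tfrac12\binom{|W|}2+\tfrac12\binom{|R|}2$ internal ones, exactly $e(U,W)$ between $U$ and $W$, and --- by the choice of block order --- at least $S/2$ between $R$ and $U\cup W$. Using $\binom{|U|}2+\binom{|W|}2+\binom{|R|}2+|U||W|+S=\binom h2$ this gives
\[
|F_{\sigma,H}|\ \ge\ \tfrac12\big(\tbinom h2-|U||W|\big)+e(U,W)\ >\ \tfrac12\tbinom h2+2h^{3/2},
\]
so $|F_{\sigma,H}|-|B_{\sigma,H}|=2|F_{\sigma,H}|-\binom h2>4h^{3/2}$ for every $\sigma\in\mathcal S$. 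Moreover, pairing each internal block-ordering with its reversal shows that at least half of the internal orderings of each block are ``at least half forward'', so $|\mathcal S|\ge\tfrac18|U|!\,|W|!\,|R|!\ge\tfrac18\,h!\,3^{-h}$, using $\binom{h}{|U|,|W|,|R|}\le 3^h$.

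Finally, exactly as in the proof of Lemma~\ref{lem:simple-is-quasirandom1}, for $0<x\le 1/\sqrt3$ we have $(1+x)^{|F_{\sigma,H}|}(1-x)^{|B_{\sigma,H}|}\ge e^{x(|F_{\sigma,H}|-|B_{\sigma,H}|)-x^2\binom h2}$ (using $1+t\ge e^{t-t^2}$ for $|t|\le1/\sqrt3$), so, discarding the non-negative terms of $q_H(x)$ with $\sigma\notin\mathcal S$,
\[
q_H(x)\ \ge\ \frac{|\mathcal S|}{h!}\,e^{\,4xh^{3/2}-x^2h^2/2}\ \ge\ \frac{1}{8\cdot 3^h}\,e^{\,4xh^{3/2}-x^2h^2/2}.
\]
Taking $x=h^{-1/2}$ turns the exponent into $\tfrac72 h$, and since $e^{7/2}>3$ the right-hand side is $\tfrac18(e^{7/2}/3)^h>1$ for large $h$; hence $q_H(h^{-1/2})>1$, which completes the argument. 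I expect the only delicate point to be the bookkeeping that $\mathcal S$ is simultaneously large enough (a $3^{-h}$-fraction of $S_h$, which comfortably absorbs the $e^{x^2\binom h2}$ loss once $x$ is this small) and uniformly good (every $\sigma\in\mathcal S$ having forward surplus exceeding $2h^{3/2}$) --- both are delivered by the block decomposition, with the choice between the block orders $(U,W,R)$ and $(R,U,W)$ being precisely what tames the otherwise-uncontrolled edges between $R$ and $U\cup W$.
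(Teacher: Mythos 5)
Your proof is correct and follows essentially the same route as the paper's: the same block decomposition $(U,W,R)$ with the orientation of $R$ chosen to capture at least half of the $R$-to-$(U\cup W)$ edges, the same count of at least $h!/(8\cdot 3^h)$ good orderings each with forward surplus exceeding $2h^{3/2}$, and the same estimate $(1+x)^f(1-x)^b\ge e^{x(f-b)-x^2\binom h2}$ at $x=h^{-1/2}$ to force $q_H(x)>1$ and conclude via the intermediate value theorem. The only cosmetic difference is that you sum the contributions directly rather than showing each single term exceeds $8\cdot 3^h$.
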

\begin{proof}
    Assume towards a contradiction that there are disjoint sets $U,W\subseteq V(T)$ for which $|e(U,W)| > \frac{1}{2}|U||W| + 2h^{3/2}$. Using these sets we will find many orderings $\sigma$ such that $|F_{\sigma, H}|\ge h^2/4+2h^{3/2}$. Let $S= V(H) \setminus (U \cup W).$ We will always place all vertices of $U$ before all vertices of $W$ in the orderings $\sigma$. Also if $e(S,U \cup W) \ge e(U \cup W, S)$ we place all the vertices of $S$ before all vertices of $U \cup W$ and behind otherwise. Within the sets $U,W,S$ we take the orderings which have more forward edges than backward edges. Note that for any subset of $T$ exactly one out of each of its orderings and its reverse has at least half of the edges going forwards. Therefore, we get at least $\frac{|U|!}{2}\cdot \frac{|W|!}{2}\cdot \frac{|S|!}{2}= \frac{h!}{8\binom{h}{|U|,|W|,|S|}}> h!/(8 \cdot 3^h)$ such orderings (where we used the trinomial expansion). Note that each such ordering $\sigma$ of $H$ has $|F_{\sigma, H}|\ge \frac12 \binom{h}{2}+2h^{3/2}$ since inside each set and between each pair of sets there are at least half of them going forwards and there is a gain of at least $2h^{3/2}$ between $U$ and $W$.

    Assume now $\sigma$ is an ordering of $H$ with $f=|F_{\sigma, H}|\ge h^2/4+2h^{3/2}$. If we let $b=|B_{\sigma,H}|$ similarly as in the previous lemma we obtain that if we choose $x=h^{-1/2}$ then
    \begin{align*}
    (1+x)^f(1-x)^b & > e^{(x-x^2)f+(-x-x^2)b}\\
                   & > e^{x(f-b)-x^2h^2/2}\\
                   & > e^{7h/2} >8 \cdot 3^h.
    \end{align*}

    From the previous two paragraphs we get $q_H(x) >1$ and complete the proof as in Lemma \ref{lem: restrict-simple-degree}.
\end{proof}

Spencer showed in \cite{spencer1} that for some constant $c>0$ in any tournament there are disjoint sets of vertices $U,W$ such that $e(U,W) \ge \frac{1}{2}|U||W|+cn^{3/2}.$ So the above lemma is best possible up to the constant factor.

\begin{proof}[Proof of Theorem \ref{thm: locally-forcing}]
Combining Theorem \ref{thm: locally-forcing-is-simple} with Lemmas \ref{lem: restrict-simple-degree}, \ref{lem:simple-is-quasirandom1} and \ref{lem:simple-is-quasirandom2} proves Theorem \ref{thm: locally-forcing}.
\end{proof}

Lemmas \ref{lem:simple-is-quasirandom1} and \ref{lem:simple-is-quasirandom2} show that in order for $H$ to be \simple{} it needs to be quasirandom in 2 different ways introduced by Chung and Graham in \cite{CG}. But it shows even more, namely that the error term should not only be qualitatively small (as in the definition of quasirandom properties in the introduction) but should in fact be close to their extremal value.

Since the previous two lemmas require $H$ to be quasirandom and are both satisfied for the random tournament $ \mathcal{T}(h,1/2)$ w.h.p.\ a natural guess would be that it should be \simple{} w.h.p..\ This turns out to be false, due to Lemma \ref{lem: restrict-simple-degree}. 

\begin{proof}[Proof of Corollary 
\ref{cor:random-is-not-locally-forcing}]
Let $H \sim \mathcal{T}(h,1/2).$ We show that with positive probability $\sum _{v \in V(H)}(d^+_H(v) - d^-_H(v)\big )^2 > h(h-1)$ which will show the result by Lemma \ref{lem: restrict-simple-degree} through Theorem 
\ref{thm: locally-forcing-is-simple}. 

Let us set $V(H)=[h]$ and define $I_{ij}=1$ if $ij \in E(H)$ and $I_{ij}=-1$ if $ji \in E(H)$, so $\mathbb{P}(I_{ij}=1)=\mathbb{P}(I_{ij}=-1)=1/2$. Note that $I_{ij}=-I_{ji}$ and that otherwise indicators $I_{ij}$ are mutually independent. We have
$$\sum _{i \in [h]}(d^+_H(i) - d^-_H(i)\big )^2 = \sum_{i \in [i]} \left(\sum_{j \in [h], j\neq i}I_{ij} \right)^2=h(h-1)+ 2\sum_{i,j,k \in [h], j \neq k}  I_{ij}I_{ik}. $$

Let $X=\sum_{i,j,k \in [h], j \neq k}  I_{ij}I_{ik}$. Note that $\mathbb{E}(X)=\sum_{i,j,k \in [h], j \neq k}  \mathbb{E}(I_{ij})\mathbb{E}(I_{ik})=0$ since each $I_{ij}$ is independent of $I_{ik}$ when $j\neq k$ and $\mathbb{E}(I_{ij})=0$. 
Our goal is to show $\mathbb{P}(X>0)>c$ for some $c>0$ independent of $h$. To do this we need to determine some higher moments of $X$. Note first that 

$$\mathbb{E}(X^2)=\sum_{i,j,k \in [h], j \neq k}\quad \sum_{i',j',k' \in [h], j' \neq k'}  \mathbb{E}(I_{ij}I_{ik}I_{i'j'}I_{i'k'})=h \binom{h-1}{2} $$

\noindent where we used the fact that unless $i=i',j=j',k=k'$ at least one of the sets $\{i,j\},\{i,k\},\{i',j'\},$ $\{i',k'\}$ is distinct from the others, so its indicator is independent of the others and its contribution vanishes.

It is not hard to estimate $\mathbb{E}X^4$ directly but it requires some case analysis. Instead note that if we replace every occurrence of $I_{ji}$ with $i<j$ with $-I_{ij}$ we get a degree $2$ polynomial whose variables are independent indicators. Thus, by a special case of Bonami-Beckner's hypercontractive inequality (see \cite{odonnell} for a simple proof) we have that $\mathbb{E}(X^4)\le (9\mathbb{E}(X^2))^2=81[\mathbb{E}(X^2)]^2$. 

Now a simple lemma (see \cite{AGK}) says that if a random variable $Y$ has expectation $0$, $\mathbb{E}Y^2>0$ and $\mathbb{E}Y^4/(\mathbb{E}Y^2)^2 \le b$ then $\mathbb{P}(Y>0)>1/(2^{4/3}b)$. So in our case $b=81$ and $\mathbb{P}(X>0) \ge 1/205$.
\end{proof}

\section{Finding \texorpdfstring{\simple{}}{T cliq-forcing} tournaments}
\label{sec: simple tournaments}
In the previous section we saw that in order for a tournament to be \simple{} it needs to be strongly quasirandom. We have also seen that the random tournament is not regular enough to be \simple{} w.h.p.\ It is natural to try to avoid this obstruction by trying next the random regular tournaments. However, the probability space of random regular tournaments is not at all easy to work with so instead we consider a modification of a different probability space on regular tournaments first introduced by Adler, Alon and Ross in \cite{AAR} to study the maximum number of Hamilton paths in tournaments. Using it we show that there are many \simple{} $h$-vertex tournaments for any large enough $h$.

To describe the construction, suppose that triangles $\Delta _1,\ldots , \Delta _L$ and edges $e_1,\ldots, e_F$ form a partition $\mathcal{P}$ of the edge set of the complete graph $K_h$. We now generate a random tournament on the vertex set $[h]$ as follows. Orient the edges of each 
	$\Delta _i$ as a cyclic triangle, each orientation 
	appearing with probability $1/2$ and then orient each $e_j$ uniformly at random as well, so that all the triangles and edges are oriented independently. 
	Write ${\cal D}_{\cal P}$ for the 
	resulting distribution on the set of $h$-vertex tournaments. 

\begin{lem}
	\label{lem: exists-regular-simple-tournaments}
	There exists $c>0$ such that for $h \geq h_0$ and $F \le ch^2$, the random tournament $H \sim {\cal D}_{\cal P}$ is \simple{} w.h.p..
\end{lem}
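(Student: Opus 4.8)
The plan is to analyse the counting polynomial $q_H(x)=2^{\binom h2}p_H\!\big(\tfrac{1+x}{2}\big)$ introduced in Section~\ref{sec: counting polynomial}. Being \simple{} is equivalent to $q_H(x)\neq 1$ for all $x\in(0,1)$ (using that $q_H$ is even, $q_H(0)=1$, and $q_H(1)=0$ since $H\sim\mathcal D_{\mathcal P}$ is non-transitive when $L\ge1$), and I will in fact show $q_H(x)<1$ on $(0,1)$. As a warm-up, by Lemma~\ref{lem: restrict-simple-degree} the coefficient of $x^2$ in $q_H$ is $\tfrac16\big(D-h(h-1)\big)$ with $D:=\sum_v\big(d^+_H(v)-d^-_H(v)\big)^2$; since each cyclic triangle $\Delta_i$ contributes nothing to any $d^+_H(v)-d^-_H(v)$, the quantity $D$ depends only on the $F\le ch^2$ leftover edges, has $\mathbb E[D]=2F$, and a routine second moment (or bounded differences) estimate gives $D=2F+O(h^{3/2})$ w.h.p. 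In particular $D<h(h-1)$ for $c<1/2$, so $q_H(x)<1$ just to the right of $0$; together with $q_H(1)=0<1$ and the intermediate value theorem, proving $q_H\neq1$ on $(0,1)$ is then the same as proving $q_H<1$ there.

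To control $q_H$ on all of $(0,1)$ I will use the elementary identity (valid for any tournament, obtained by writing $1\pm x=(1-x)\rho^{\{0,1\}}$ with $\rho=\tfrac{1+x}{1-x}$ and recentring the exponent), together with the symmetry $|F_{\bar\sigma,H}|=|B_{\sigma,H}|$ which turns $\rho^{S}$ into $\cosh(S\log\rho)$:
\[
  q_H(x)=(1-x^2)^{\binom h2/2}\;\mathbb E_{\sigma\sim S_h}\!\Big[\cosh\!\big(2t\,S_H(\sigma)\big)\Big],\qquad t:=\tfrac12\log\tfrac{1+x}{1-x},\quad S_H(\sigma):=|F_{\sigma,H}|-\tfrac12\tbinom h2 .
\]
Since $x=\tanh t$ gives $(1-x^2)^{-\binom h2/2}=\cosh^{\binom h2}(t)$, it suffices to prove
$\mathbb E_\sigma\big[\cosh(2tS_H(\sigma))\big]<\cosh^{\binom h2}(t)$ for every $t\in(0,\infty)$.

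The heart of the proof is a concentration estimate for the number of forward edges of $H$ under a uniformly random ordering. I will show that w.h.p.\ over the orientations, $S_H$ is sub-Gaussian with variance proxy of order $h^2$: precisely, $\operatorname{Var}_\sigma(|F_{\sigma,H}|)=\tfrac{h(h-1)+2D}{24}=(1+o(1))\tfrac{(1+4c)h^2}{24}$, and $\mathbb E_\sigma\big[e^{2tS_H(\sigma)}\big]\le\exp\!\big((1+o(1))\,2\operatorname{Var}_\sigma(|F_{\sigma,H}|)\,t^2\big)$ for $t$ up to a small constant, along with the deterministic bound $|S_H(\sigma)|\le h^{3/2}\sqrt{\log h}$ (up to logarithmic factors) for all $\sigma$, which comes from the same tail estimate via a union bound over the $h!$ orderings. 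Granting this, the desired inequality splits into two regimes. For $t$ bounded away from $0$ one uses the crude bound $\mathbb E_\sigma[\cosh 2tS_H]\le\cosh(2tB)\le e^{2tB}$ with $B=h^{3/2}\sqrt{\log h}$ against $\cosh^{\binom h2}(t)\ge e^{\binom h2(t-\log2)}$; since $B=o\big(\binom h2\big)$ this is strict for $t\ge\log2+\varepsilon$ and $h$ large. For small $t$ one uses the sub-Gaussian bound and the pointwise inequality $\log\cosh t\ge\tfrac{t^2}{2}-\tfrac{t^4}{12}$, which reduces the required bound to $2\operatorname{Var}_\sigma(|F_{\sigma,H}|)+o(h^2)<\binom h2\big(\tfrac12-\tfrac{t^2}{12}\big)$; this holds for all $t$ below a fixed positive constant $t_0\sim 2\sqrt{1-2c}$ exactly because $\tfrac{1+4c}{12}<\tfrac14$ when $c<\tfrac12$. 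Choosing $c$ small enough (say $c=\tfrac14$) makes $t_0>\log2$, so the two regimes overlap and cover $(0,\infty)$, giving $q_H<1$ on $(0,1)$ and hence that $H$ is \simple.

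The main obstacle is the sub-Gaussian concentration of $|F_{\sigma,H}|$ with the correct order of magnitude. A naive exposure martingale (revealing $\sigma^{-1}(1),\sigma^{-1}(2),\dots$) or a McDiarmid argument on the i.i.d.\ uniform labels of the vertices only yields variance proxy of order $h^3$, which is too weak by a factor of $h$ and would correspond to the transitive, maximally unbalanced case. To get the sharp order one must use that $H\sim\mathcal D_{\mathcal P}$ is very nearly regular. Writing $|F_{\sigma,H}|=\sum_i(\text{forward edges of }\Delta_i)+\sum_j(\text{forward indicator of }e_j)$, each triangle contributes $\tfrac32\pm\tfrac12$, the fluctuations of vertex-disjoint triangles are independent (relative orders of disjoint vertex sets under a uniform $\sigma$ are independent), and—crucially—the martingale increment at step $j$ is, up to an $O(\sqrt h)$ term, only $\tfrac12\big(d^+_H(\sigma^{-1}(j))-d^-_H(\sigma^{-1}(j))\big)$, whose squares sum to $D/4=O(h^2)$. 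Feeding a sum-of-squares of increments of order $h^2$ and individual increments of order $h^{1/2}$ (up to logarithmic factors) into Freedman's inequality yields the sub-Gaussian tail in the range needed; making this completely rigorous—in particular controlling the increments uniformly and pushing the sub-Gaussian range as far as the union bound over orderings requires—is where the technical work lies. Finally, the same analysis is robust enough to allow a variety of modifications of $\mathcal P$ (e.g.\ replacing a few triangles by suitable small gadgets) to handle $h\not\equiv1,3\pmod6$.
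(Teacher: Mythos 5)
Your setup is sound: the identity $q_H(x)=\cosh^{-\binom{h}{2}}(t)\,\mathbb{E}_{\sigma}[\cosh(2tS_H(\sigma))]$ with $x=\tanh t$ is correct, the variance computation $\mathbb{E}_\sigma[S_H(\sigma)^2]=\tfrac{h(h-1)+2D}{24}$ is correct, and the large-$t$ regime (crude bound against $\cosh^{\binom{h}{2}}(t)$ using $\max_\sigma|S_H(\sigma)|=O(h^{3/2}\sqrt{\log h})$, which does follow from Hoeffding over the independent triangle/edge orientations plus a union bound over the $h!$ orderings) is fine. The gap is the central claim that $\mathbb{E}_{\sigma}[e^{2tS_H(\sigma)}]\le\exp\big((1+o(1))\,2\mathrm{Var}_\sigma(|F_{\sigma,H}|)\,t^2\big)$ holds for $t$ up to a fixed constant $t_0>\log 2$. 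First, the premise of your martingale argument is false: revealing $\sigma^{-1}(1),\sigma^{-1}(2),\dots$ in order, the Doob increment when $v$ is placed equals $\tfrac12\big(d^+_U(v)-d^-_U(v)\big)$ where $U$ is the set of still-unplaced vertices, not $\tfrac12\big(d^+_H(v)-d^-_H(v)\big)$ up to $O(\sqrt h)$; for the ordering whose prefix is exactly $N^-(v)$ this increment is $\tfrac12 d^+_H(v)=\Theta(h)$, and no tournament is regular on every vertex subset. Second, and more fundamentally, even granting increments of order $\sqrt h$, Freedman/Bernstein-type inequalities control the moment generating function only for $\lambda\cdot(\text{increment bound})\lesssim 1$, i.e. $t\lesssim h^{-1/2}$; at constant $t$ the per-step factor $\exp\big(v(e^{\lambda R}-1-\lambda R)/R^2\big)$ is astronomically larger than $e^{\lambda^2v/2}$. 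Meanwhile your crude bound only covers $t\gtrsim S_{\max}/h^2\approx h^{-1/2}\sqrt{\log h}$, so a range of $t$ of the form $[ch^{-1/2},Ch^{-1/2}\sqrt{\log h}]$ is left uncovered by any version of this two-regime strategy; closing it would amount to a sharp large-deviation estimate for the number of orderings with many forward edges, which is not "technical work" but the whole difficulty.

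The paper sidesteps this entirely with an algebraic observation you partially anticipate but do not exploit: under any ordering a cyclic triangle has either two forward and one backward edge or vice versa, so its contribution to $(1+x)^{|F_{\sigma,H}|}(1-x)^{|B_{\sigma,H}|}$ factors as $(1-x^2)(1+J_{i,\sigma}x)$ with $J_{i,\sigma}=\pm1$. Hence $q_H(x)=(1-x^2)^{L}s_H(x)$ exactly, where $s_H(x)=\mathbb{E}_\sigma\prod_i(1+J_{i,\sigma}x)$ is an even polynomial with coefficients $c_{2\ell}$. A second-moment bound over the random orientations (pairs of index sets with an object disjoint from all others contribute zero) shows $c_{2\ell}\le\binom{L}{\ell}$ for all $\ell$ w.h.p., whence $s_H(x)\le(1+x^2)^L$ and $q_H(x)\le(1-x^4)^L<1$ uniformly on $(0,1)$ in one stroke. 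If you want to salvage your route, you would need to reprove something equivalent to this coefficient bound, since it is exactly what controls the correlations among the $J_{i,\sigma}$ that your independence heuristic ignores.
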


\begin{proof} 

	Note that $3L+F=\binom{h}{2}$ so by taking $c$ small enough we may assume $L \ge 10F$, implying $h^2/8 \le L\le h^2/6$. 

	To prove the lemma, we again work with $q_H(x)$, showing that 
	with positive probability $q_H(x) \neq 1$ for all $x \in [-1,1]\setminus \{0\}$. Note 
	that in any ordering $\sigma$ of the vertices of $H$, 
	the triangle corresponding to $\Delta _i$ either has 
	two forward edges 
	or two backward edges. Let $J_{i, \sigma }(H) = +1$  
	if the first case and $J_{i,\sigma }(H) = -1$ in the second case. Similarly let $J_{L+j,\sigma}(H)=\pm 1$ depending on whether edge $e_j$ is forwards or backwards. We have
		\begin{align*}
			q_H(x)  
				& = 
			{\mathbb E}_{\sigma \sim S_h}\Big ( 
			(1+ x )^{|F_{\sigma ,H}|}
			(1- x )^{|B_{\sigma ,H}|} \Big )\\
				& =
			{\mathbb E}_{\sigma \sim S_h}\Big ( 
			\prod _{i\in [L]} (1+ x )(1-x) 
			(1+ J_{i,\sigma }(H)x ) \prod _{i\in [L+1,L+F]} (1+ J_{i,\sigma }(H)x )\Big )\\
				& =
			(1-x^2)^L \times 
			{\mathbb E}_{\sigma \sim S_h}\Big ( 
			\prod _{i\in [L+F]} (1+ J_{i,\sigma }(H)x ) \Big )\\
				& = 
			(1-x^2)^{L} \times s_H(x).
		\end{align*}
	As ${q_H}(x)$ is an even polynomial, so 
	is the (random) polynomial $s_H(x)$. Let $s_H(x) 
	= \sum _{\ell\in [(L+F)/2]} c_{2\ell} x^{2\ell}$, where $\{c_{2\ell}\}_{\ell\in [(L+F)/2]}$ are random variables 
	depending on $H$. 
		
	To complete the lemma it will suffice to prove that with high probability  
	$c_{2\ell} \leq \binom {L}{\ell}$ for all $\ell\in [(L+F)/2]$. Indeed, 
	then
		\begin{equation*}
			{s_H}(x) = \sum _{\ell\in [(L+F)/2]} c_{2\ell} x^{2\ell} 
				\leq 
			\sum _{\ell\in [(L+F)/2]} \binom {L}{\ell} x^{2\ell}
			\le
				\sum _{\ell\in [L]} \binom {L}{\ell} x^{2\ell} 
				=
			(1 + x^2)^L,
		\end{equation*}
	which gives ${q_H}(x) = 
	(1-x^2)^L \times s_H(x) \leq 
	(1-x^2)^L \times (1+x^2)^L = 
	(1-x^4)^L < 1$ for $x \in [-1,1]\setminus \{0\}$, i.e. $H$ is \simple. 
	To obtain the required bound on $c_{2\ell}$, note that 
		\begin{equation}
			c_{2\ell} 
					= 
			\sum _{A \in \binom {[L+F]}{2\ell}}
			{\mathbb E} _{\sigma \sim S_h} 
			\Big (  \prod _{i\in A} 
			J_{i,\sigma }(H) \Big ).
		\end{equation}
	Notice that for any $A \in \binom {[L+F]}{2\ell}$ if there is $i\in A$ such that the triangle or edge corresponding to $i$ is vertex disjoint from all other objects indexed by $A$ then ${\mathbb E} _{\sigma \sim S_h} \Big (  \prod_{i\in A} 	J_{i,\sigma }(H) \Big )=0$. Indeed, in this case we can cancel the contribution of a permutation $\sigma $ to the expectation with the permutation $\widetilde \sigma $ obtained from $\sigma $ by reversing the orientation of $\Delta _i$ or $e_{i-L}$. Let $\mathcal{A}\subseteq \binom {[L+F]}{2\ell}$ denote the sets not of this form, i.e. if $A \in {\cal A}$ then every object  indexed by $A$ shares a vertex with another object indexed by $A$. We have shown that
		\begin{equation}
			\label{eqn: c_2l bound}
			c_{2\ell} 
					= 
			\sum _{A \in \mathcal{A}}
			{\mathbb E} _{\sigma \sim S_h} 
			\Big (  \prod _{i\in A} 
			J_{i,\sigma }(H) \Big ).
		\end{equation}
	Note further that any $A \in \mathcal{A}$ must have a subset $A' \subset A$ with $|A'| = \ell$ such that object indexed by $A$ intersects an object intersects a vertex of an object indexed by $A'$. We can choose these $A'$ in $\binom{L+F}{\ell}$ many ways and they span at most $3\ell$ vertices. This leaves us with at most $3\ell h$ options for each of the remaining objects. In particular, we have shown that 
	\begin{equation}
	\label{eqn: bound on A}
	|\mathcal{A}| \le \binom{L+F}{\ell}\cdot \binom{3\ell h}{\ell}.    
	\end{equation}
	 Turning back to upper bounding $c_{2\ell}$ note that \eqref{eqn: c_2l bound} gives us
		\begin{align}
		\label{eqn: sq of coefficient}
			{\mathbb E}_{H \sim {\cal D}_\mathcal{P}} \big ( c_{2\ell} \big )^2 
				& =
			{\mathbb E} _{\sigma , \sigma ' \sim S_h} 
			{\mathbb E}_{H \sim {\cal D}_\mathcal{P}} \Big ( 
			{\sum} _{A,B\in \mathcal{A}}
			\prod _{i\in A} J_{i,\sigma }(H) 
			\prod _{j\in B} J_{j,\sigma '}(H) \Big ).
		\end{align} 
	The contribution given here by all pairs $(A,A)$ in the 
	inner sum is at most $|\mathcal{A}|$. The contribution 
	given by the other pairs vanishes since if there is an $i \in A \setminus B$ then $J_{i,\sigma}(H)$ is independent from all $J_{i',\sigma }(H)$ for $i' \in A \setminus\{i\}$ and all $J_{j,\sigma '}(H)$ for $j \in B$ so
	$${\mathbb E}_{\sigma ,\sigma ' \in S_h} \big (\prod _{i\in A} J_{i,\sigma }(H) 
			\prod _{j\in B} J_{j,\sigma '}(H)\big ) = {\mathbb E}_{\sigma \in S_h}(J_{i,\sigma}) \cdot {\mathbb E_{\sigma, \sigma '\in S_h}} \left (\prod _{i'\in A \setminus\{i\}} J_{i',\sigma }(H) 
			\prod _{j\in B} J_{j,\sigma '}(H)\right )=0,$$ 
	since ${\mathbb E}_{\sigma \in S_h}(J_{i,\sigma})=0$. Thus ${\mathbb E}_{\sigma \in S_h}(c_{2\ell})^2 \le |\mathcal{A}|$  by \eqref{eqn: sq of coefficient}. 
	By Markov's inequality the probability 
	that $c_{2\ell} > \binom {L}{\ell}$ is at most $|\mathcal{A}|/\binom {L}{\ell}^2$. For $\ell \ge 2 \log L$ we get
	\begin{equation}
	    \label{eqn: estimate large l}
	|\mathcal{A}|/\binom {L}{\ell}^2 <\binom {L+F}{2\ell}/\binom {L}{\ell}^2 <\prod_{i=0}^{\ell-1}\left(\frac{L+F-2i}{L-i}\right)^2\binom {2\ell}{\ell}^{-1}\le \left(\frac{L+F}{L}\right)^{2\ell}/\binom {2\ell}{\ell}\le 2^{-\ell}\le \frac{1}{L^2}
	\end{equation}
	\noindent	where we used $L \ge F$ and in the second to last equality we took $c$ small enough and $\ell \geq \ell _0$ since $h \geq h_0$.
	
	When $\ell < 2 \log L$ using \eqref{eqn: bound on A}
	$$|\mathcal{A}|/\binom {L}{\ell}^2 \le \binom{L+F}{\ell}\cdot \binom{3\ell h}{\ell}/\binom {L}{\ell}^2 \le \left(\frac{3\ell h(L+F)}{(L-\ell)^2}\right)^\ell\le \frac{16\log L}{\sqrt{L}}.$$
	Here we used $L \ge 10F$ and $h^2/8 \le L \le h^2/6$ and assumed $h$ is large enough for $\frac{16\log L}{\sqrt{L}} \le 1$.
	Summing over all $\ell$ we have shown that $c_{2\ell} \leq \binom {L}{\ell}$ for all $\ell \in [L]$ 
	with probability at least $1-2\log L  \cdot \frac{16\log L}{\sqrt{L}} +\frac{L-2\log L}{L^2}= 1 - o(1)$.		
\end{proof}
\vspace{1mm}

    \noindent \textbf{Remark: } It is well-known (see \cite{Kirkman}) that for any $h \in {\mathbb N}$ with 
	$h \equiv 1$ or $3 \mod 6$ the complete graph $K_h$ on vertex set 
	$[h]$ admits a Steiner triple decomposition. That is, there is a partition $\mathcal{P}$ of $K_h$ as above for which $F=0$. Note that in this case $\mathcal{D}_\mathcal{P}$ is always a regular tournament. So there is an infinite family of \simple{} regular tournaments.

\section{\texorpdfstring{\esimple{}}{T bip-forcing} tournaments}
\label{sec: reg simple tourns are forcing}

In the previous section we have shown that there are many \simple{} tournaments. Our original goal however was to study locally forcing tournaments and Theorem \ref{thm: locally-forcing-is-simple} only says that locally forcing tournaments are necessarily \simple. While we believe this to be a sufficient condition, our argument requires a weak additional assumption, which we will call \esimple. We show that being 
\esimple{} and \simple{} is equivalent to being locally forcing, which allows us to show that many \simple{} tournaments found in the previous section are in fact locally forcing. We now give the formal definition of \esimple.

\begin{dfn*}
We define the $a$-th order degree counting polynomial of an $h$-vertex tournament by:  $$p_{H,a}(x):=\binom{h}{a}^{-1}2^{-\binom{a}{2}-\binom{h-a}{2}}\sum_{A \in \binom{V(H)}{a}} x^{e(A,V \setminus A)}(1-x)^{e(V \setminus A,A)}.$$
\end{dfn*}

\begin{dfn*}
An $h$-vertex tournament is \textit{\esimple{}} if there is no $\alpha \in (1/2,1]$ such that $p_{H,a}(\alpha)=2^{-\binom{h}{2}}$ for all $1 \le a \le h-1$ simultaneously. 
\end{dfn*}

We have seen that if $H$ fails to be \simple{} then there is an $\alpha \neq 1/2$ such that $\mathcal{T}(n, \alpha )$ has the same local count of $H$ as the random tournament $\mathcal{T}(n,1/2)$. In the following lemma we will see that $H$ not being \esimple{} means that there is an $\alpha \neq 1/2$ such that $\mathcal{T}(n,n,\alpha)$ does have the same count of $H$ as $\mathcal{T}(n,n,1/2)=\mathcal{T}(2n,1/2)$. In fact $H$ not being \esimple{} is a seemingly much stronger restriction on $H$ in the sense that it means that there is an $\alpha>1/2$ such that for \textit{all} $1\le a \le h-1,$ $\mathcal{T}(n,n,\alpha )$ has the correct count of $H$ with $a$ vertices embedded to the left and $h-a$ to the right side for all $a$ simultaneously. 

\begin{lem}\label{lem: locally-forcing-is-extremely-simple}
	Every locally forcing tournament is \esimple. 
\end{lem}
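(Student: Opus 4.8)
The plan is to mimic the proof of Theorem~\ref{thm: locally-forcing-is-simple}, replacing the family $\mathcal{T}(n,\alpha)$ by the bipartite family $\mathcal{T}(n,n,\alpha)$. Suppose $H$ is not \esimple. Then by definition there is some $\alpha \in (1/2,1]$ such that $p_{H,a}(\alpha) = 2^{-\binom{h}{2}}$ for all $1 \le a \le h-1$. Fix such an $\alpha$ and sample $T \sim \mathcal{T}(n,n,\alpha)$, writing $V(T) = L \cup R$ for the two sides of size $n$. The goal is to show that with probability $1-o(1)$ the tournament $T$ satisfies ${\cal P}^*_2(H)$ but fails ${\cal P}_1$, which contradicts $H$ being locally forcing.

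First I would compute $\mathbb{E}_{T \sim \mathcal{T}(n,n,\alpha)}\big(N^*_T(H;U)\big)$ for an arbitrary $U \subseteq V(T)$. Writing $U_L = U \cap L$ and $U_R = U \cap R$ with $|U_L| = m_L$, $|U_R| = m_R$, a labelled embedding $\phi: V(H) \to U$ sends some $a$-subset $A \subseteq V(H)$ into $U_L$ and the complementary $(h-a)$-set into $U_R$. For a fixed such $A$ and fixed target vertices, the probability that a random $\phi$ with this pattern is an embedding factors as (probability the edges inside $A$ are correct, which is $2^{-\binom{a}{2}}$ since $T[L]$ is uniform) times (the same for $V \setminus A$ inside $R$, giving $2^{-\binom{h-a}{2}}$) times ($\alpha^{e(A,V\setminus A)}(1-\alpha)^{e(V\setminus A,A)}$ for the cross edges). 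Summing over $A \in \binom{V(H)}{a}$, over the choice of which $a$ vertices of $H$ go left, and over ordered target tuples, the expectation becomes
\begin{equation*}
\mathbb{E}\big(N^*_T(H;U)\big) = \sum_{a=0}^{h} (m_L)_a (m_R)_{h-a} \binom{h}{a}\, 2^{-\binom{a}{2}-\binom{h-a}{2}} \cdot \binom{h}{a}^{-1}\!\!\sum_{A \in \binom{V(H)}{a}} \alpha^{e(A,V\setminus A)}(1-\alpha)^{e(V\setminus A,A)},
\end{equation*}
so by the definition of $p_{H,a}$ and the hypothesis $p_{H,a}(\alpha) = 2^{-\binom{h}{2}}$ for $1 \le a \le h-1$ (and noting $p_{H,0}(\alpha) = p_{H,h}(\alpha) = 2^{-\binom{h}{2}}$ trivially, since then there are no cross-edges and the normalisation is exactly $2^{-\binom{h}{2}}$), every term equals $2^{-\binom{h}{2}}(m_L)_a(m_R)_{h-a}\binom{h}{a}$. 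By Vandermonde this sums to $2^{-\binom{h}{2}}(m_L+m_R)_h = 2^{-\binom{h}{2}}(|U|)_h = 2^{-\binom{h}{2}}|U|^h + o(n^h)$, exactly as in the quasirandom case. Then the same Azuma/Lipschitz argument as in Theorem~\ref{thm: locally-forcing-is-simple} applies verbatim: each edge lies in at most $\binom{h}{2}n^{h-2}$ copies of $H$, so $N^*_T(H;U)$ is concentrated within $\varepsilon n^h$ of its mean with probability $1 - e^{-\Omega(n^2)}$, and a union bound over all $U$ gives ${\cal P}^*_2(H)$ w.h.p.

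Finally I would verify that $T \sim \mathcal{T}(n,n,\alpha)$ is not quasirandom w.h.p.: take the ordering $\sigma$ putting all of $L$ before all of $R$; then the number of $\sigma$-forward cross-edges is distributed as $\mathrm{Bin}(n^2,\alpha)$, which is $\alpha n^2 \pm o(n^2)$ by Chernoff, while the forward edges inside $L$ and $R$ contribute $\frac12 n^2/2 \pm o(n^2)$ each; since $\alpha \neq 1/2$ this makes $|F_{\sigma,T}|$ bounded away from $\frac12\binom{2n}{2}$ by $\Omega(n^2)$, so ${\cal P}_1$ fails. This exhibits a tournament satisfying ${\cal P}^*_2(H)$ but not ${\cal P}_1$, contradicting that $H$ is locally forcing; hence every locally forcing tournament is \esimple. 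The only mildly delicate point — and the one I would be most careful about — is the bookkeeping in the expectation computation, namely checking that the falling-factorial / Vandermonde identity genuinely reassembles the sum into $2^{-\binom{h}{2}}(|U|)_h$ and that the boundary cases $a \in \{0,h\}$ carry the correct normalising constant; once that identity is in place, the concentration and non-quasirandomness steps are routine repetitions of the earlier proof.
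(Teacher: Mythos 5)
Your proposal is correct and follows essentially the same route as the paper's proof: sample $T \sim \mathcal{T}(n,n,\alpha)$, compute the expected local count by conditioning on which $a$ vertices of $H$ land on each side, use $p_{H,a}(\alpha)=2^{-\binom{h}{2}}$ together with the Vandermonde identity to recover $2^{-\binom{h}{2}}(|U|)_h$, then conclude by concentration and by exhibiting the $L$-before-$R$ ordering with $(\alpha-\tfrac12)n^2$ excess forward edges. Your explicit handling of the boundary cases $a\in\{0,h\}$ is exactly the point the paper also relies on, so there is nothing to add.
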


\begin{proof}
	Suppose towards a contradiction that there is an $\alpha \in ( 1/2,1] $ such that $p_{H,a}(\alpha)=2^{-\binom{h}{2}}$ for all $1 \le a \le h-1$. 
	Now select $T \sim {\cal T}(n,n,\alpha )$ with bipartition $(L,R)$. 
	For a set $U \subset L, W \subset R$ let $X_{U,W}$ denote the 
	random variable $X_{U,W}(T) = N^*_{T}(H;U \cup W)$. Probability that a fixed subset of size $a$ of $U$ and a fixed subset of size $h-a$ of $W$ span an embedding of $H$ with $A \subset V(H)$ being embedded to the left is $2^{-\binom{a}{2}-\binom{h-a}{2}}\alpha^{e(A,V \setminus A)}(1-\alpha)^{e(A,v \setminus A)}$. Summing over all possibilities we obtain  
	\begin{align*}
	{\mathbb E}_{T \sim {\cal T}(n,n,\alpha )}(X_{U,W}) & = \sum_{a=0}^h (|U|)_a (|W|)_{h-a} \times \sum_{A \in \binom{V(H)}{a}}2^{-\binom{a}{2}-\binom{h-a}{2}}\alpha^{e(A,V \setminus A)}(1-\alpha)^{e(A,v \setminus A)}\\
	& = (|W|)_h\binom{h}{a}2^{-\binom {h}{2}}+ \sum_{a=1}^{h-1} (|U|)_a (|W|)_{h-a} \times \binom{h}{a}p_{H,a}(\alpha)+ (|U|)_{h}\binom{h}{a}2^{-\binom {h}{2}}\\
	& = \sum_{a=0}^h \binom{h}{a}(|U|)_a (|W|)_{h-a}\times 2^{-\binom {h}{2}} \\
	& =  (|U|+|W|)_h \times 2^{-\binom {h}{2}},
	\end{align*} 
	Where in the last equality we used the identity $\sum_{a=0}^h \binom{b}{a}\binom{c}{h-a}=\binom{b+c}{h}$ holding for any $b,c$.
	
	The rest of the proof proceeds in the same way as the proof of Theorem \ref{thm: locally-forcing-is-simple} except that the number of forwards edges of $T(n,n,\alpha)$ is $(1/4+\alpha/2)\binom {2n}{2}\pm o(n^2)$ with probability $1 - o(1).$	
\end{proof}

This lemma together with Theorem \ref{thm: locally-forcing-is-simple} shows the `only if' part of Theorem \ref{thm:-simple and esimple iff locally forcing}. We postpone the proof of the `if' part to the next section, showing first that there are many tournaments which are \esimple{}, in addition to being \simple.

As already mentioned, we believe being \esimple{} is a very weak additional condition which might be already implied by being \simple. It is even possible that all tournaments are \esimple. We now give certain simple conditions that make a tournament \esimple. We say that an $h$-vertex tournament $H$ is \textit{nearly regular} if $|d^+_H(v)-d^-_H(v)|< \sqrt{h}/2$ for all $v \in V(H)$.

\begin{lem}\label{lem: nearly regular is esimple}
    Any nearly regular tournament is \esimple.
\end{lem}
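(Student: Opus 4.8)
\textbf{Proof plan for Lemma \ref{lem: nearly regular is esimple}.}

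The plan is to show that if $H$ is nearly regular then for every $\alpha \in (1/2,1]$ there is at least one value $a \in \{1,\dots,h-1\}$ for which $p_{H,a}(\alpha) \neq 2^{-\binom{h}{2}}$, which is exactly what \esimple{} demands. As in the treatment of \simple{} tournaments, the natural move is to rescale and recenter: set $\beta = 2\alpha - 1 \in (0,1]$ and define $q_{H,a}(x) := 2^{\binom{h}{2}} p_{H,a}\big(\tfrac{1+x}{2}\big)$, so that the condition $p_{H,a}(\alpha) = 2^{-\binom{h}{2}}$ becomes $q_{H,a}(\beta) = 1$. Using the indicator functions $I_e(\sigma)$ from \eqref{equation: diff_expression_p_H} adapted to the bipartition indicator $\mathbf 1[A]$ (for each edge $e$ between $A$ and $V\setminus A$, record $+1$ if it points from $A$ to $V\setminus A$ and $-1$ otherwise), one gets
\begin{equation*}
q_{H,a}(x) = \binom{h}{a}^{-1} \sum_{A \in \binom{V(H)}{a}} \prod_{e \in E(A, V\setminus A)} (1 + I_{e,A}\,x).
\end{equation*}
As in Lemma \ref{lem: restrict-simple-degree}, $q_{H,a}(0) = 1$ and the sign of the $x^2$-coefficient controls whether $q_{H,a}(x) > 1$ just to the right of $0$. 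The key computation is therefore to extract this coefficient and show it is positive for a suitable choice of $a$.

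First I would compute the coefficient of $x^2$ in $q_{H,a}(x)$. Expanding the product, the $x^2$-term is $\binom{h}{a}^{-1}\sum_A \sum_{\{e,e'\} \subset E(A,V\setminus A)} I_{e,A}I_{e',A}$. Now average over a uniformly random $A$ of size $a$: for a fixed pair of edges $\{e,e'\}$ sharing a vertex, the probability both lie in the cut $E(A,V\setminus A)$ and the expected product $I_{e,A}I_{e',A}$ depend only on how $e,e'$ meet (common endpoint on the $A$-side or not, and the orientations). A short calculation — analogous to the $1/3$ and $-1/3$ contributions in Lemma \ref{lem: restrict-simple-degree}, but now with combinatorial factors like $\frac{a(h-a)(h-a-1)}{h(h-1)(h-2)}$ coming from the random choice of $A$ — should show that the $x^2$-coefficient of $q_{H,a}$ is, up to a positive multiple, a fixed quadratic-type expression in the degree imbalances $d^+_H(v) - d^-_H(v)$ plus a term like $-\,c\cdot a(h-a)$ (the ``random tournament'' contribution, which is what you'd get if all imbalances vanished). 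Since $H$ is nearly regular, $\sum_v (d^+_H(v)-d^-_H(v))^2 < h \cdot h/4 = h^2/4$, so this imbalance correction is genuinely smaller than the negative $a(h-a)$ term for an appropriate choice of $a$ (say $a = \lfloor h/2 \rfloor$, maximizing $a(h-a)$), forcing the $x^2$-coefficient of $q_{H,\lfloor h/2\rfloor}$ to be \emph{negative}.

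With the $x^2$-coefficient negative I would conclude as follows: $q_{H,a}(x) = 1 - c x^2 + O(x^3)$ with $c > 0$, hence $q_{H,a}(x) < 1$ for all sufficiently small $x > 0$. This already rules out $\alpha$ near $1/2$. To handle all of $(1/2,1]$, note that for $\alpha = 1$ (i.e.\ $x = 1$) we have $p_{H,a}(1) = 0 \neq 2^{-\binom h2}$ whenever the cut $E(A,V\setminus A)$ has a backward edge for every $A$ — which certainly holds for $a = \lfloor h/2\rfloor$ when $H$ is not transitive, and is immediate in all cases since no single $A$ can make the whole cut one-directional in a nearly regular $H$ — so the endpoint is also fine. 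The remaining interior of $(1/2,1)$ is where I expect the only real subtlety: one cannot simply invoke an intermediate value argument as in Lemma \ref{lem: restrict-simple-degree} because here we have freedom to pick a \emph{different} $a$ for different $\alpha$. The cleanest route is to prove the stronger statement that $q_{H, \lfloor h/2\rfloor}(x) < 1$ for \emph{all} $x \in (0,1]$, by a direct estimate: bound each product $\prod_e (1 + I_{e,A}x) \le \prod_e e^{I_{e,A}x - I_{e,A}^2 x^2 + \dots}$ and exploit that the near-regularity of $H$ keeps $\sum_A \sum_e I_{e,A}$ and the pairwise correlations small, so that the geometric-mean-type bound beats $1$; this parallels the single-term estimates in Lemmas \ref{lem:simple-is-quasirandom1} and \ref{lem:simple-is-quasirandom2}. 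The main obstacle is making this last uniform-in-$x$ bound clean rather than a messy case analysis — concentrating on the single value $a = \lfloor h/2\rfloor$ and packaging the near-regularity hypothesis into a single inequality on $\sum_v (d^+_H(v)-d^-_H(v))^2$ is what makes it go through.
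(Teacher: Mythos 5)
Your overall strategy (exhibit a single $a$ for which $q_{H,a}(x)\neq 1$ on $(0,1]$) is the right shape, and your computation that the $x^2$-coefficient of $q_{H,\lfloor h/2\rfloor}$ is negative for nearly regular $H$ is plausible. But the choice $a=\lfloor h/2\rfloor$ creates a gap that I do not believe can be closed, and it is exactly the step you flag as "the only real subtlety". Near-regularity is a \emph{per-vertex} degree condition: it controls $e(A,V\setminus A)-e(V\setminus A,A)$ only when $A$ is a single vertex (or its complement), where this difference is $d^+(v)-d^-(v)$ and is pointwise bounded by $\sqrt h/2$. For $|A|\approx h/2$ it gives essentially no control: by Spencer's theorem (quoted in the paper right after Lemma \ref{lem:simple-is-quasirandom2}), \emph{every} $h$-vertex tournament, however regular, contains sets $A$ with cut imbalance $\Omega(h^{3/2})$. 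For such an $A$ the single term $(1+x)^{e(A,V\setminus A)}(1-x)^{e(V\setminus A,A)}$ is of order $e^{x\cdot\Omega(h^{3/2})-x^2h^2/8}$, which is exponentially large in $h$ for $x\asymp h^{-1/2}$. So your proposed "direct estimate" exploiting that "near-regularity keeps the pairwise correlations small" is false as stated; to show the \emph{average} over all $\binom{h}{h/2}$ sets still stays below $1$ you would need genuine control of the distribution of the cut imbalance over random half-sets --- the kind of second-moment argument the paper carries out in Lemma \ref{lem: exists-regular-simple-tournaments}, which uses a special random construction and is not available for an arbitrary nearly regular tournament. The negativity of the $x^2$-coefficient only rules out $\alpha$ in a neighbourhood of $1/2$, and since the definition of \esimple{} allows a different $a$ to fail at different $\alpha$, you cannot patch this with an intermediate value argument either (as you correctly note).

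The paper's proof avoids all of this by working only with $a=1$ and $a=h-1$, i.e.\ the two values for which the cut statistics \emph{are} the vertex degrees. It shows the stronger pointwise bound
\begin{equation*}
q_{H,1}(x)+q_{H,h-1}(x)=\frac1h\sum_{v}(1-x^2)^{\min(d^+(v),d^-(v))}\Big((1+x)^{|d^+(v)-d^-(v)|}+(1-x)^{|d^+(v)-d^-(v)|}\Big)\le 2(1-x^4)^{h/6}<2
\end{equation*}
for all $x\in(0,1]$, using $\min(d^+,d^-)\ge h/6$ and $|d^+-d^-|<\sqrt h/2$ together with the elementary inequality $(1+x)^{\lfloor\sqrt h/2\rfloor}+(1-x)^{\lfloor\sqrt h/2\rfloor}\le 2(1+x^2)^{h/6}$. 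Hence at least one of $q_{H,1}(x),q_{H,h-1}(x)$ is strictly below $1$, which is all that \esimple{} requires. If you redirect your argument to $a\in\{1,h-1\}$ and replace the perturbative $x^2$-coefficient analysis by a uniform bound of this type, the proof goes through; with $a=\lfloor h/2\rfloor$ it does not.
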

\begin{proof}
We are going to show that $p_{H,1}(x)+p_{H,h-1}(x)<2\cdot 2^{-\binom{h}{2}}$ for all $x \in ( 1/2, 1]$. It will be easier to work with the following polynomials 
$$q_{H,a}(x)=2^{\binom{h}{2}}\cdot p_{H,a}\left (\frac{1+x}{2}\right)=\binom{h}{a}^{-1} \sum_{A \in \binom{V(H)}{a}}(1+x)^{e(A,V \setminus A)}(1-x)^{e(V \setminus A,A)}.$$
So that
\begin{align*}
    q_{H,1}(x)+q_{H,h-1}(x) & = \frac{1}{h}\sum_{v \in V(H)}(1+x)^{d^+(v)}(1-x)^{d^-(v)}+(1+x)^{d^-(v)}(1-x)^{d^+(v)}\\
    & = \frac{1}{h}\sum_{v \in V(H)}(1-x^2)^{\min(d^+(v),d^-(v))}\left((1+x)^{|d^+(v)-d^-(v)|}+(1-x)^{|d^-(v)-d^+(v)|}\right) \\
    & \le (1-x^2)^{h/6}\left((1+x)^{\floor{\sqrt{h}/2}}+(1-x)^{\floor{\sqrt{h}/2}}\right)
\end{align*}
Where we used that $\min(d^+(v),d^-(v)) \ge (h-1)/2-\sqrt{h}/2\ge h/6$, when $h\ge 5$ and that if $h < 5$ near regularity implies regularity so $(h-1)/2\ge h/6$. Note that since $\binom{\sqrt{h}/2}{2\ell} \le \frac{(\sqrt{h}/2)^{2\ell}}{(2\ell)!}\le \left(\frac{h}{6\ell}\right)^\ell\le \binom{h/6}{\ell}$
$$(1+x)^{\floor{\sqrt{h}/2}}+(1-x)^{\floor{\sqrt{h}/2}}\le 2\sum_{\ell=0}^{\sqrt{h}/4} \binom{\sqrt{h}/2}{2\ell}x^{2\ell }\le 2\sum_{\ell=0}^{\sqrt{h}/4} \binom{h/6}{\ell}x^{2\ell }\le 2(1+x^2)^{h/6}.$$
Combining the above two inequalities we obtain $q_{H,1}(x)+q_{H,h-1}(x)\le 2(1-x^4)^{h/6} < 2$ as desired.
\end{proof}

Note that we know by Lemma \ref{lem: restrict-simple-degree} that any \simple{} tournament must be almost regular but the restriction of the above lemma is slightly stronger. On the other hand our argument only uses a much weaker property than the one given to us by the definition of being \esimple.\vspace{1mm} 

\noindent \textbf{Remark:} It is not hard to adapt our proof of Lemma \ref{lem: exists-regular-simple-tournaments} to show that the random tournament $\mathcal{D}_{\mathcal{P}}$ is \esimple{} provided $\mathcal{P}$ consists of at most $ch^2$ edges (and the remaining objects are triangles) for sufficiently small $c$. So in some sense all our examples from the previous section are in fact locally forcing.\vspace{1mm} 

To conclude the section we deduce Theorem \ref{thm: exist-locally-forcing}, assuming the `if' statement from Theorem \ref{thm:-simple and esimple iff locally forcing} (which will be proven in the next section).

\begin{proof}[Proof of Theorem \ref{thm: exist-locally-forcing}]
Let $\mathcal{P}$ be a partition of $K_h$ consisting of triangles and edges with every vertex incident to less than $\sqrt{h}/2$ of the edges in $\mathcal{P}$. It is easy to see that such a partition exists for any large enough $h$ (for example by a result of Gustavsson \cite{gustavsson}). 

By Lemma \ref{lem: exists-regular-simple-tournaments} $H \sim \mathcal{D}_{\mathcal{P}}$ is \simple{} w.h.p.. Furthermore, any $H \sim \mathcal{D}_{\mathcal{P}}$ is nearly regular and so by Lemma \ref{lem: nearly regular is esimple} it is \esimple{}. Putting these together, Theorem \ref{thm:-simple and esimple iff locally forcing} shows that the tournament $H \sim \mathcal{D}_{\mathcal{P}}$ is w.h.p.\ locally forcing.
\end{proof}

\section{Proving local forcing}
\label{sec: proof of simple and esimple implies local forcing}

Before proceeding to the proof of the `if' statement of Theorem \ref{thm:-simple and esimple iff locally forcing} in subsection \ref{subsec: simple reg}, we first recall some results on regularity lemmas for directed graphs in the next subsection.

\subsection{Regularity and counting lemmas for directed graphs}

A directed graph $D = (V,E)$ consists of a set $V$ of vertices and a set of edges $E \subset V \times V$. Clearly any tournament is also a directed graph. Given disjoint sets $A,B \subset V$ we write $E(A,B)$ to denote the collection of edges $(a,b)\in E \cap (A \times B)$ and $e(A,B) = |E(A,B)|$. We will write $d(A,B)$ to denote the \emph{density} of the pair $(A,B)$, given by 
	\begin{equation*}
	d(A,B) = \frac {|E(A,B)|}{|A||B|}.
	\end{equation*}
Note that if $T$ is a tournament then $d(A,B) = 1 - d(B,A)$. Given disjoint sets $X,Y \subset V$ we say that $(X,Y)$ is an $\varepsilon $-regular pair if all $X' \subset X$ and $Y' \subset Y$ with $|X'| \geq \varepsilon |X|$ and $|Y'| \geq \varepsilon |Y|$ satisfy 
	\begin{equation*}
		|d(X',Y') - d(X,Y)| \leq \varepsilon \qquad \mbox{and} \qquad 
		|d(Y',X') - d(Y,X)| \leq \varepsilon.
	\end{equation*}
A partition of $V = \{V_0,V_1,\ldots ,V_K\}$ is said to be an $\varepsilon $-regular partition of $D$ if: 
	\begin{enumerate}[(i)]
		\item $|V_0| \leq \varepsilon |V|$, 
		\item $|V_1| = \cdots = |V_K|$,
		\item all but at most $\varepsilon \binom {K}{2}$ of the pairs $(V_i,V_j)$ with $1\leq i < j \leq K$ are $\varepsilon $-regular. 
	\end{enumerate}
The set $V_0$ is called the exceptional set and the sets $V_1,\ldots ,V_K$ are clusters. This partition is said to refine a partition $V = U_1\cup \cdots \cup U_L$ if for all $k\in [K]$ we have $V_k \subset U_l$ for some $l\in [L]$.
\vspace{2mm}

The directed regularity lemma of Alon and Shapira from \cite{ASh}, which extends Szemer\'edi's graph regularity lemma \cite{Sz},  states the following:

\begin{thm}
	\label{thm: digraph regularity}
	Given $m,L \in {\mathbb N}$ and $ \varepsilon > 0$, there is $M = M(m,L,\varepsilon )$ with the following property. 
	Given a directed graph $D = (V,E)$ with $|V| \geq M$ and a partition 
	$V = U_1\cup \cdots \cup U_L$ there is an $\varepsilon $-regular 
	partition $\{V_0,V_1,\ldots, V_K\}$ with $m \leq K \leq M$, which refines $U_1\cup \cdots \cup U_L$.
\end{thm}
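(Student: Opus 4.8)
The plan is to run Szemer\'edi's energy-increment argument, the only adaptation being that throughout, a ``pair of clusters'' must mean an \emph{ordered} pair $(V_i,V_j)$ — this is exactly what the two-sided digraph notion of $\varepsilon$-regularity (controlling both $d(X',Y')$ and $d(Y',X')$) calls for. Equivalently, one could encode $D$ as the complete graph on $V$ in which each edge $\{u,v\}$ receives one of four colours recording which of $\overrightarrow{uv}$, $\overrightarrow{vu}$ lie in $E$, and then quote the multicolour version of Szemer\'edi's regularity lemma; but carrying out the direct argument is just as short and makes the digraph features transparent.

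To each partition $\mathcal{Q}=\{W_1,\dots,W_r\}$ of $V$ I would attach its \emph{energy}
\[
 q(\mathcal{Q})\;=\;\frac{1}{|V|^2}\sum_{\substack{1\le i,j\le r\\ i\ne j}}|W_i|\,|W_j|\,d(W_i,W_j)^2,
\]
which satisfies $0\le q(\mathcal{Q})\le 1$ since the weights $|W_i||W_j|/|V|^2$ sum to at most $1$ and all densities lie in $[0,1]$. Two facts drive everything. \textbf{Monotonicity:} if $\mathcal{Q}'$ refines $\mathcal{Q}$ then $q(\mathcal{Q}')\ge q(\mathcal{Q})$ — apply convexity of $t\mapsto t^2$ separately in each ordered pair, noting that for a digraph $d(W_i,W_j)$ and $d(W_j,W_i)$ are independent edge densities, each the weighted average of the corresponding densities of the sub-pairs. \textbf{Energy jump:} if $\{V_0,V_1,\dots,V_K\}$ fails to be $\varepsilon$-regular, then more than $\varepsilon\binom K2$ of the pairs $\{V_i,V_j\}$ fail $\varepsilon$-regularity, and for each such pair some $X'\subseteq V_i$, $Y'\subseteq V_j$ with $|X'|\ge\varepsilon|V_i|$, $|Y'|\ge\varepsilon|V_j|$ violates one of the two inequalities, say $|d(X',Y')-d(V_i,V_j)|>\varepsilon$; a defect Cauchy--Schwarz computation then shows that refining $V_i$ by $\{X',V_i\setminus X'\}$ and $V_j$ by $\{Y',V_j\setminus Y'\}$ increases the contribution of the ordered pair $(i,j)$ to $q$ by at least $\frac{|V_i||V_j|}{|V|^2}\,\varepsilon^2\cdot\frac{|X'|}{|V_i|}\cdot\frac{|Y'|}{|V_j|}\ge\frac{|V_i||V_j|}{|V|^2}\varepsilon^4$. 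Taking the common refinement $\mathcal{Q}'$ of all these witness bipartitions at once and using monotonicity pairwise, the individual gains add up, so $q(\mathcal{Q}')\ge q(\{V_0,\dots,V_K\})+c\varepsilon^5$ for an absolute constant $c>0$.

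The algorithm then starts from the partition $\{U_1,\dots,U_L\}$, chopped further into at least $m$ nonempty pieces, and repeats: if the current partition is $\varepsilon$-regular, stop; otherwise pass to the energy-jumping refinement $\mathcal{Q}'$ and then \emph{clean up}, cutting each cluster into blocks of a fixed size $t$ and dumping the fewer than $t$ leftover vertices per cluster, together with the previous exceptional set, into $V_0$. With $t$ chosen small relative to the (bounded) final number of clusters, this keeps $|V_0|\le\varepsilon|V|$ and perturbs the energy by an amount absorbable into the $c\varepsilon^5$ gain, provided $|V|\ge M$ for a suitable $M=M(m,L,\varepsilon)$. Since $q\le 1$, the jump step occurs at most $O(\varepsilon^{-5})$ times, and each application multiplies the number of clusters by at most $2^{2K}$ (a cluster lies in $2(K-1)$ ordered pairs, hence is cut into at most $2^{2(K-1)}$ parts), so $K$ never exceeds a tower of height $O(\varepsilon^{-5})$ — this bounds $M$. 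Every refinement is performed inside existing clusters, so the output still refines $U_1\cup\dots\cup U_L$, and the initial chop ensures $K\ge m$.

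I expect the only point needing care beyond the classical proof to be checking that the two-sided regularity condition still forces an energy increment; but this is mild, since an irregular pair must violate at least one of its two density conditions, and $d(V_i,V_j)$ and $d(V_j,V_i)$ are each honest edge densities to which the defect estimate applies verbatim. The genuine work is the cleanup bookkeeping — keeping the clusters equal, controlling $V_0$, and simultaneously preserving the refinement of $U_1\cup\dots\cup U_L$ and the lower bound $K\ge m$ — which is routine but somewhat lengthy; alternatively, as noted, all of it may be outsourced by applying the multicolour regularity lemma to the $4$-colouring of the complete graph on $V$ induced by $D$.
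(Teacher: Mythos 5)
The paper does not prove this theorem at all: it simply cites the directed regularity lemma of Alon and Shapira \cite{ASh}, adding only a remark that the refinement clause follows from their proof. Your energy-increment argument is exactly the standard proof underlying that citation --- the one genuinely directed point, that a pair failing the two-sided condition must violate at least one of its two (independent) density inequalities so the defect Cauchy--Schwarz step still produces an index increment, is handled correctly --- so the proposal is sound and matches the intended source.
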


\noindent \textbf{Remark:} While the theorem in \cite{ASh} does not mention 
refinements, it follows easily from the proof.\vspace{2mm}

A convenient structure associated with a regular partition $\{V_0,V_1,\ldots ,V_K\}$ 
is the reduced graph ${\cal R}$, which has vertex set $\{V_1,\ldots ,V_K\}$ with the property that $V_{i}V_j$ is an edge of ${\cal R}$ if $(V_i,V_j)$ is an $\varepsilon $-regular pair. Note that by definition ${\cal R}$ has at least $(1-\varepsilon )\binom {K}{2}$ edges.
\vspace{1mm}

We will also require the following counting lemma. 

\begin{lem}
	\label{lem: counting lemma}
	Let $T = (V,E)$ be a tournament and $V_1,\ldots ,V_h$ be disjoint subsets of $V$. 
	Suppose that for each $1\leq i < j \leq h$ the pair $(V_i,V_j)$ is 
	$\varepsilon $-regular with density $d_{ij}$, with $d_{ji} = 1 - d_{ij}$. 
	Then given an $h$-vertex tournament $H$ with $V(H) = \{u_1,\ldots ,u_h\}$, 
	the number of copies of $H$ in $V$, with $u_i$ sent to $V_i$ for all 
	$i\in [h]$ is
		\begin{equation*}
			\Big ( \prod _{{\overrightarrow {u_iu_j}} \in E(H)} d_{ij} 
			\pm C_h \varepsilon \mbox{ } \Big ) \prod _{l \in [h]} |V_{l}|.
		\end{equation*}
\end{lem}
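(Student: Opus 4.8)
The plan is to prove this counting lemma by a standard induction on $h$, embedding the vertices of $H$ one at a time while tracking how the neighbourhoods of already-embedded vertices shrink but stay large. I would first fix notation: for a partial embedding sending $u_1,\dots,u_i$ to vertices $w_1,\dots,w_i$ in $V_1,\dots,V_i$ respectively, consider for each $j>i$ the candidate set $V_j^{(i)}\subseteq V_j$ consisting of those $v\in V_j$ such that $(w_k,v)$ has the correct orientation in $T$ (matching $H$) for every $k\le i$. The key claim, proved by induction on $i$, is that one can choose $w_1,\dots,w_i$ so that $|V_j^{(i)}| = \bigl(\prod_{k\le i,\ \overrightarrow{u_ku_j}\in E(H)} d_{kj}\,\cdot\prod_{k\le i,\ \overrightarrow{u_ju_k}\in E(H)}(1-d_{kj}) \pm i\varepsilon'\bigr)|V_j|$ for a suitable $\varepsilon' = O(\varepsilon)$, where the product records the edge-orientation constraints from $H$ between $u_k$ and $u_j$.

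The induction step uses $\varepsilon$-regularity twice. When we pass from $i-1$ to $i$, we need to pick $w_i\in V_i^{(i-1)}$; since $|V_i^{(i-1)}|$ is (inductively) at least roughly $2^{-(i-1)}|V_i|\ge \varepsilon|V_i|$, almost all choices of $w_i$ are good, in the sense that for each $j>i$ the set of $v\in V_j^{(i-1)}$ with the correct orientation between $w_i$ and $v$ has size $\bigl(d(\cdot)\pm\varepsilon\bigr)|V_j^{(i-1)}|$ — this is exactly the $\varepsilon$-regularity of the pair $(V_i,V_j)$ applied to the subsets $V_i^{(i-1)}$ and $V_j^{(i-1)}$, both of which are large enough (at least $\varepsilon$ times the cluster size) by the induction hypothesis. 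A vertex $w_i$ that fails this for some $j$ lies in a set of size at most $\varepsilon|V_i^{(i-1)}|$ per bad $j$, so discarding at most $h\varepsilon|V_i^{(i-1)}|$ vertices leaves a valid choice; in fact we want a positive fraction of good $w_i$, so we just need $h\varepsilon<1$, i.e. $\varepsilon$ small in terms of $h$. Multiplying the relative errors across the $i$ steps and converting to additive error, each step contributes $\pm 2\varepsilon$ or so to the relative density of $V_j^{(\cdot)}$, and since the densities are bounded by $1$ these accumulate to an additive error $O(h\varepsilon)$; this is where the constant $C_h$ comes from, and tracking it carefully (e.g. $C_h \le 4^h$ or similar) is routine.

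Finally, to count copies of $H$ rather than just produce one, I would run the same argument but at each stage count the number of valid choices of $w_i$: there are $|V_i^{(i-1)}|\bigl(1\pm O(h\varepsilon)\bigr)$ of them, since only an $O(h\varepsilon)$-fraction are bad. Taking the product over $i=1,\dots,h$ of $|V_i^{(i-1)}|$, and substituting the inductively-computed values of these sizes, the telescoping product of densities rearranges into $\prod_{\overrightarrow{u_iu_j}\in E(H)}d_{ij}$ (each edge of $H$ between $u_i$ and $u_j$ with $i<j$ contributes $d_{ij}$ if forward in $H$ and $1-d_{ij}=d_{ji}$ if backward, matching the statement since $d_{ji}=1-d_{ij}$), times $\prod_{l\in[h]}|V_l|$, with total error $\pm C_h\varepsilon\prod_l|V_l|$. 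The main obstacle, such as it is, is purely bookkeeping: propagating the multiplicative errors through $h$ rounds and verifying they sum to $C_h\varepsilon$ with an explicit (or at least finite) $C_h$, while simultaneously maintaining the lower bound $|V_j^{(i)}|\ge\varepsilon|V_j|$ needed to keep applying regularity — this forces a hierarchy like $\varepsilon\ll 2^{-h}/h$, which is harmless since $\varepsilon$ is allowed to depend on $h$. There is no genuine difficulty here beyond care with constants; this is the textbook embedding/counting lemma argument adapted to the (oriented) tournament setting, where the only twist is that each pair contributes a density to exactly one of the two orientations.
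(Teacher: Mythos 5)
Your proposal is the standard greedy embedding-and-counting argument. The paper instead disposes of the lemma in one line: for each pair $(V_i,V_j)$ delete all edges whose orientation disagrees with the corresponding edge of $H$, forget the orientation of what remains, and quote the undirected graph counting lemma. That reduction is worth noting because it sidesteps the one place where your argument has a genuine gap.

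The gap is the maintained lower bound $|V_j^{(i)}|\geq 2^{-(i-1)}|V_j|\geq\varepsilon|V_j|$. The factor $2^{-(i-1)}$ implicitly assumes every relevant density is about $1/2$ or more, but the lemma makes no such assumption: $d_{ij}$ may be arbitrarily close to $0$ or $1$, and in the paper's applications it genuinely is (the densities $j/m$, $d\pm\eta$ and $1-d\pm\eta$ range over essentially all of $[0,1]$). If some $d_{kj}$ with $\overrightarrow{u_ku_j}\in E(H)$ equals, say, $\varepsilon^{1/2}$, then already $|V_j^{(1)}|\approx\varepsilon^{1/2}|V_j|$, and after one or two further steps the candidate set drops below $\varepsilon|V_j|$, at which point $\varepsilon$-regularity gives no information and the induction halts. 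The usual repair is to treat separately the case where some relevant density is below a threshold $\delta$ (there both the main term and the actual count are at most $\delta\prod_l|V_l|$); but to keep every candidate set above $\varepsilon|V_j|$ through $h$ rounds one is forced to take $\delta$ of order $\varepsilon^{1/h}$, which yields an error term $C_h\varepsilon^{1/h}$ rather than the stated $C_h\varepsilon$. That weaker bound would in fact suffice for every application in this paper, but it is not the lemma as stated; a genuinely linear-in-$\varepsilon$ error for arbitrary densities comes from the telescoping (hybrid) proof of the counting lemma, in which the pairs are replaced one at a time by ideal ones and each swap is bounded by $\varepsilon\prod_l|V_l|$ using regularity --- this is what the reference cited in the paper does. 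A minor further point: regularity applied to the pair of subsets $(V_i^{(i-1)},V_j^{(i-1)})$ controls only the average degree between them; to conclude that few $w_i$ are bad you should apply regularity to the set of atypical $w_i$ itself and note it must have size less than $\varepsilon|V_i|$. That step is standard and easily repaired as written.
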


\begin{proof}
	By deleting directed edges of $T$ which are not 
	of the form $\overrightarrow{v_iv_j}$ with $v_i \in V_i$, $v_j \in V_j$ 
	and $\overrightarrow {u_iu_j} \in E(H)$ and ignoring the directions 
	of the remaining edges, this follows immediately from the usual 
	graph counting lemma; see \cite{schacht}.
\end{proof}

An embedding $\phi $ of a $h$-vertex tournament $H$ into a tournament $T$ is said to be 
partite with respect to the disjoint sets $U_1,\ldots ,U_h \subset V(T)$ if each set 
$U_i$ receives one vertex of the embedding. Let Emb$_T(H;U_1,\ldots ,U_h)$ denote 
the set of all such $\phi $ and let $N^*_T(H; U_1,\ldots ,U_h) = |\mbox{Emb}_T(H;U_1,\ldots ,U_h)|$.

The following proposition gives a `partite version' of property ${\cal P}_2^*(H)$.

\begin{prop}
	\label{prop: partite_counting}
	Let $H$ be an $h$-vertex tournament. Suppose that $T$ is an 
	$n$-vertex tournament with $N^*_T(H; U) = \rho |U|^h \pm C$ for 
	all $U \subset V(T)$. Then $N^*_T(H;U_1,\ldots ,U_h) = 
	h!\rho\prod _{i\in [h]} |U_i| \pm 2^hC$ for all disjoint sets 
	$U_1,\ldots ,U_h \subset V(T)$.
\end{prop}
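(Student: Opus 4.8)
The plan is to relate the partite count $N^*_T(H; U_1, \ldots, U_h)$ to the unrestricted counts $N^*_T(H; U)$ over various subsets $U$ by inclusion-exclusion. Given disjoint sets $U_1, \ldots, U_h$, set $U = U_1 \cup \cdots \cup U_h$. Every labelled embedding $\phi$ of $H$ into $T[U]$ assigns each vertex of $H$ to exactly one of the parts $U_i$; conversely a partite embedding with respect to $U_1, \ldots, U_h$ is one in which the $h$ vertices of $H$ land in $h$ distinct parts, one per part. So the strategy is to write $N^*_T(H;U)$ as a sum, over all functions $f : V(H) \to [h]$, of the number of embeddings respecting the part-assignment $f$, and then isolate the contribution of the bijections $f$ — there are exactly $h!$ of them, each contributing a term of the form $N^*_T(H; U_{f^{-1}(1)}, \ldots, U_{f^{-1}(h)})$, and by relabelling all of these equal $N^*_T(H; U_1, \ldots, U_h)$ (using that $H$ ranges over all its automorphic images, or more simply that the set $\mathrm{Emb}_T(H; U_1,\ldots,U_h)$ does not depend on the order of the $U_i$).

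First I would make this precise via a standard Möbius/inclusion-exclusion over subsets of parts. For a subset $S \subseteq [h]$ let $U_S = \bigcup_{i \in S} U_i$, and let $g(S)$ be the number of labelled embeddings of $H$ into $T[U_S]$ that use \emph{all} the parts indexed by $S$ (i.e.\ hit every $U_i$, $i \in S$). Then $N^*_T(H; U_S) = \sum_{S' \subseteq S} g(S')$, so by inclusion-exclusion $g([h]) = \sum_{S \subseteq [h]} (-1)^{h - |S|} N^*_T(H; U_S)$. Now $g([h])$ counts exactly the embeddings hitting all $h$ parts; since $|V(H)| = h$, hitting all $h$ parts forces exactly one vertex per part, so $g([h]) = N^*_T(H; U_1, \ldots, U_h)$. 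Applying the hypothesis $N^*_T(H;U_S) = \rho |U_S|^h \pm C$ for each of the $2^h$ subsets $S$ gives
\begin{equation*}
N^*_T(H; U_1, \ldots, U_h) = \rho \sum_{S \subseteq [h]} (-1)^{h-|S|} \Big( \sum_{i \in S} |U_i| \Big)^{\!h} \;\pm\; 2^h C.
\end{equation*}
It remains to evaluate the polynomial identity $\sum_{S \subseteq [h]} (-1)^{h-|S|} \big( \sum_{i \in S} |U_i| \big)^h$: writing $x_i = |U_i|$, this is precisely the $h$-th finite difference of the polynomial $(x_1 + \cdots + x_h)^h$ applied once in each variable, which by the standard fact about iterated differences of a degree-$h$ polynomial equals $h! \cdot [x_1 x_2 \cdots x_h]\,(x_1 + \cdots + x_h)^h = h! \cdot \frac{h!}{1!\cdots 1!} \cdot \frac{1}{h!}$… more cleanly: the coefficient of the squarefree monomial $x_1\cdots x_h$ in $(x_1+\cdots+x_h)^h$ is the multinomial coefficient $\binom{h}{1,1,\ldots,1} = h!$, and the alternating sum extracts exactly the sum of coefficients of monomials that are divisible by every $x_i$, i.e.\ just $h! \, x_1 \cdots x_h$. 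Hence the bracketed sum equals $h!\prod_{i \in [h]} |U_i|$, giving $N^*_T(H; U_1,\ldots,U_h) = h!\rho \prod_{i\in[h]}|U_i| \pm 2^h C$, as claimed.

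The only real point requiring care is the combinatorial identity $g([h]) = N^*_T(H;U_1,\ldots,U_h)$, i.e.\ that an embedding of an $h$-vertex tournament into $U_1 \cup \cdots \cup U_h$ meeting all $h$ parts must be partite — this is immediate from the pigeonhole principle since there are $h$ vertices and $h$ parts. The finite-difference evaluation is routine (one could alternatively prove $\sum_{S}(-1)^{h-|S|}(\sum_{i\in S}x_i)^k = 0$ for $k < h$ and $= h!\prod x_i$ for $k = h$ by induction on $h$, differencing one variable at a time). I expect no genuine obstacle here; the main thing is to set up the inclusion-exclusion bookkeeping cleanly and track that the error term only inflates by the factor $2^h$ coming from the $2^h$ terms in the alternating sum.
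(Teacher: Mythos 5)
Your proof is correct and follows essentially the same route as the paper: inclusion--exclusion over the subsets $S \subseteq [h]$ to express the partite count via the counts $N^*_T(H;U_S)$, followed by the evaluation of the alternating sum $\sum_S (-1)^{h-|S|}\big(\sum_{i\in S}|U_i|\big)^h = h!\prod_i |U_i|$ (the paper does this last step by a counting interpretation, you by extracting the squarefree monomial, which is the same identity). One small caveat: the remark in your opening paragraph that each of the $h!$ bijections contributes a term equal to $N^*_T(H;U_1,\ldots,U_h)$ would overcount by a factor of $h!$, but your actual argument via $g([h])$ avoids this and is correct as written.
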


\begin{proof}
	By the inclusion-exclusion principle we have
		\begin{align*}
			N^*_T(H;U_1,\ldots ,U_h) 
				& =
			\sum _{r=1}^h 
			(-1)^{h-r} \Big ( \sum _{I \in \binom {[h]}{r}} 
			 N^*_T(H; \bigcup _{i\in I} U_i ) \Big ).
		\end{align*}
	Using $N^*_T(H; \bigcup _{i\in I} U_i) = 
	\rho\big ( \sum _{i\in I} |U_i| \big ) ^h \pm C$ 
	for all $I \subset [h]$ gives
		\begin{align*}
			N^*_T(H;U_1,\ldots ,U_h) 
			 	& =
			 \rho  \times \bigg ( \sum _{r=1}^h 
			(-1)^{h-r} \Big ( \sum _{I \in \binom {[h]}{r}} 
			\big ( \sum _{i\in I} |U_i| \big ) ^h \Big ) \bigg ) \pm 
			 2^h C
			 	= 
			 \rho \times \big ( h! \prod _{i\in [h]} |U_i| \big ) \pm 
			 2^h C .
		\end{align*}
	The final equality holds as the summed term in the penultimate equation 
	can be viewed as counting, using inclusion-exclusion, the maps 
	$g: [h] \to \bigcup _{i\in [h]} U_i$ sending each $i\in [h]$ to distinct $U_j$. Indeed there are $h! \prod _{i\in [h]} |U_i|$ such $g$ which intersect each of the $h$ sets $U_i$ and for each $1 \le r \le h$ there are $\sum _{I \in \binom {[h]}{r}} \big ( \sum _{i\in I} |U_i| \big ) ^h$ such maps which intersect at most $r$ of the $U_i$'s. 
	\end{proof}

\subsection{\texorpdfstring{\simple{}}{T cliq-forcing} and \texorpdfstring{\esimple{}}{T bip-forcing} imply local forcing}
\label{subsec: simple reg}

Before proceeding to the main result of this section, we note a simple consequence of Ramsey's theorem.

	\begin{lem}
		\label{lem: Turan-Ramsey decomp into cliques}
	Given $\alpha >0$ and $k, \ell \in {\mathbb N}$ there is 
	$\gamma = \gamma (k,\ell, \alpha )> 0 $ and $n_0 = 
	n_0(k,\ell, \alpha )\in {\mathbb N}$ with 
	the following property. Suppose that $G$ is an $n$-vertex graph with 
	$n \geq n_0$ and at least $(1-\gamma )\binom {n}{2}$ edges. 
	Then in any $k$-colouring 
	of $E(G)$ there are vertex disjoint sets $U_1,\ldots , U_M 
	\subset V(G)$ so that: 
		\begin{enumerate}[(i)]
			\item $G[U_m]$ is a monochromatic clique of order $\ell $ 
			for all $m\in [M]$;
			\item $|V(G) \setminus \big ( \cup _{m \in [M]} U_m \big ) | 
			\leq \alpha n$.
		\end{enumerate}
	\end{lem}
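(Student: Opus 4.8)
The plan is a greedy extraction argument combining Ramsey's theorem with Tur\'an's theorem. Set $R = R_k(\ell)$, the $k$-colour Ramsey number, so that every $k$-colouring of the edges of a $K_R$ contains a monochromatic $K_\ell$ (we may assume $\ell \ge 2$, as the case $\ell \le 1$ is trivial, so $R \ge 2$); take $\gamma = \alpha^2/(2R)$ and $n_0 \ge 2R/\alpha$. The idea is to repeatedly locate a (not necessarily monochromatic) copy of $K_R$ inside $G$ on the still-uncovered vertices, pull out of it a monochromatic $K_\ell$ as the next set $U_m$, and iterate.

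More precisely, I would maintain a pool $P \subseteq V(G)$ of uncovered vertices, initially $P = V(G)$, and run the following step as long as $|P| > \alpha n$. First, $G[P]$ contains a copy of $K_R$: writing $N = |P| > \alpha n \ge 2R$, the induced graph $G[P]$ inherits at most $\gamma \binom{n}{2}$ non-edges from $G$, whereas by Tur\'an's theorem a $K_R$-free graph on $N$ vertices has at least $\binom{N}{2} - (1 - \frac{1}{R-1})\frac{N^2}{2} \ge \frac{N^2}{4(R-1)}$ non-edges (using $N \ge 2(R-1)$), and $\frac{N^2}{4(R-1)} > \frac{\alpha^2 n^2}{4R} \ge \gamma\binom{n}{2}$, a contradiction. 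Second, fixing a $K_R$ on a vertex set $S \subseteq P$, Ramsey's theorem applied to the $k$-colouring of $E(G)$ restricted to $S$ yields a monochromatic $K_\ell$ on some $U \subseteq S$; record $U$ as the next $U_m$ and update $P \leftarrow P \setminus U$, which decreases $|P|$ by exactly $\ell$. The process terminates after at most $n/\ell$ steps with $|P| \le \alpha n$. By construction each recorded set induces a monochromatic clique of order $\ell$ in $G$, the recorded sets are pairwise disjoint since each is deleted from $P$ the moment it is recorded, and $V(G) \setminus \bigcup_m U_m = P$ has size at most $\alpha n$, giving (i) and (ii).

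The only point requiring a moment's thought --- and it is not really an obstacle --- is that at each step we delete from $P$ only the $\ell$ vertices of the extracted clique, not all $R$ vertices of $S$; the $R - \ell$ leftover vertices of $S$ remain available in $P$ for later steps. This causes no problem because the number of non-edges of $G[P]$ can only decrease as $P$ shrinks, so the Tur\'an/Ramsey step above remains valid at every stage until $|P| \le \alpha n$, and hence there is no accumulated ``waste'' beyond the final uncovered pool. The remaining ingredients --- finiteness of $R_k(\ell)$ and the Tur\'an bound --- are entirely standard, so once the constants $\gamma$ and $n_0$ are chosen as above the verification is routine.
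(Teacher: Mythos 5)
Your proposal is correct and follows essentially the same route as the paper: the same choice of constants $R = R_k(\ell)$, $\gamma = \alpha^2/(2R)$, $n_0 \approx 2R/\alpha$, the same Tur\'an-plus-Ramsey step showing that any uncovered set of more than $\alpha n$ vertices still contains a monochromatic $K_\ell$, and the same greedy extraction. The only difference is cosmetic (you count non-edges where the paper counts edges), so there is nothing further to add.
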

	
	\begin{proof}
		Set $R:= R_k({\ell })$, the $k$-colour Ramsey number 
		of an ${\ell }$-vertex clique. Also set $\gamma = {\alpha }^2/2R$ and 
		$n_0 = \lceil 2R / \alpha \rceil $. Suppose 
		we are given a $k$-colouring of $E(G)$ as in the statement. 
		To prove the lemma it suffices to show that every 
		$W \subset V(G)$ 
		with $|W| \geq \alpha n$ contains a set $U \subset W$ with 
		$|U| = \ell$ so that $G[U]$ is a monochromatic clique. Indeed, 
		using this property we can then greedily find sets $U_1,\ldots ,U_M$ 
		as in the lemma.
		
		To see that this holds, first note that since $|W| \geq \alpha n \geq 2R$ and $|W|^2 \geq \alpha ^2n^2 = 2\gamma Rn^2$, we have 
			\begin{equation*}
				e(G[W]) 
					\geq 
				\binom {|W|}{2} - \gamma \binom {n}{2} 
					> 
				\Big ( 1 - \frac {1}{R} \Big ) \frac {|W|^2}{2}								
				+ \Big ( \frac {|W|^2}{2R} - \frac {|W|}{2} - \frac {\gamma n^2}{2} \Big )
					\geq
				\Big ( 1 - \frac {1}{R} \Big ) \frac {|W|^2}{2}.
			\end{equation*}
		By Tur\'an's theorem there is $W' \subset W$ 
		such that $|W'| = R$ and $G[W']$ is complete. 
		As $G[W]$ is $k$-coloured, from Ramsey's theorem and the 
		definition of $R$,
		there is $U \subset W'$ with $|U| = \ell $ and $G[U]$ is 
		monochromatic. This completes the proof.
	\end{proof}

We now turn to the proof of the if part of Theorem \ref{thm:-simple and esimple iff locally forcing}.

\begin{thm}
Any \simple{} and \esimple{} tournament is locally forcing.
\end{thm}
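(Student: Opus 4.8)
The plan is to show that any $T$ satisfying ${\cal P}^*_2(H)$ must, after an application of the directed regularity lemma, have almost all pairs of clusters $\varepsilon$-regular of density close to $1/2$; the counting lemma then forces ${\cal P}_2(4)$, and Theorem~\ref{thm: chung-graham} gives quasirandomness. So fix $H$ which is \simple{} and \esimple{}, and let $T$ be an $n$-vertex tournament satisfying ${\cal P}^*_2(H)$ with a sufficiently small error term $\delta n^h$. First I would convert this into the partite statement via Proposition~\ref{prop: partite_counting}: for all disjoint $U_1,\dots,U_h\subseteq V(T)$ one has $N^*_T(H;U_1,\dots,U_h)=h!\,2^{-\binom h2}\prod_i|U_i|\pm 2^h\delta n^h$. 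Then apply Theorem~\ref{thm: digraph regularity} to $T$ with a large $m$ and tiny $\varepsilon$, obtaining an $\varepsilon$-regular partition $V_0,V_1,\dots,V_K$ with $m\le K\le M$ and clusters of size $t=(1\pm\varepsilon)n/K\ge n/M$; since $M$ is a constant once $m,\varepsilon$ are fixed, it suffices to take $\delta\ll M^{-h}$ so that the error $\delta n^h$ is negligible against $t^h$. Colour each edge $ij$ of the reduced graph ${\cal R}$ by $d(V_i,V_j)$ (for $i<j$) rounded to the nearest multiple of $\eta$; as ${\cal R}$ has at least $(1-\varepsilon)\binom K2$ edges, Lemma~\ref{lem: Turan-Ramsey decomp into cliques} produces vertex-disjoint monochromatic $\ell$-cliques ${\cal U}_1,\dots,{\cal U}_N$ in ${\cal R}$, of colours $\alpha_1,\dots,\alpha_N$, covering all but $\le\alpha' K$ clusters, where $\ell$ is a large constant fixed below.

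Next I would use the \simple{} hypothesis to pin down the $\alpha_m$. Taking any $h$ clusters inside ${\cal U}_m$, all $\binom h2$ pairs among them are $\varepsilon$-regular of density $\alpha_m\pm\eta$, so Lemma~\ref{lem: counting lemma}, summed over all $h!$ labelled embeddings of $H$ into these clusters, evaluates to $h!\,p_H(\alpha_m)\,t^h\pm O\big((\varepsilon+\eta)t^h\big)$ — this is exactly where $p_H$ enters, since the embedding realizing an ordering $\sigma$ of $V(H)$ contributes $\alpha_m^{|F_{\sigma,H}|}(1-\alpha_m)^{|B_{\sigma,H}|}$. Comparing with the partite count and dividing by $t^h\ge(n/M)^h$ gives $p_H(\alpha_m)=2^{-\binom h2}\pm\mu$; since $H$ is \simple{} the only solution of $p_H(x)=2^{-\binom h2}$ in $[0,1]$ is $x=1/2$, and $p_H$ is a fixed polynomial, so by continuity $\alpha_m=\tfrac12\pm\kappa$. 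Hence every clique ${\cal U}_m$ has all internal densities within $\kappa+\eta$ of $1/2$.

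Then I would control densities between two cliques ${\cal U}_m,{\cal U}_{m'}$ using the \esimple{} hypothesis. Fix $a\in\{1,\dots,h-1\}$. If a colour $\beta$ occurs on at least $\gamma\ell^2$ of the $\varepsilon$-regular pairs $(V_i,V_j)$ with $V_i\in{\cal U}_m,\,V_j\in{\cal U}_{m'}$ (now coloured by $d(V_i,V_j)$ rounded), then, provided $\ell$ is large in terms of $h$ and $\gamma$, the K\H{o}v\'ari--S\'os--Tur\'an theorem yields $a$ clusters in ${\cal U}_m$ and $h-a$ in ${\cal U}_{m'}$ with all $a(h-a)$ cross pairs $\varepsilon$-regular of density $\beta\pm\eta$. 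Applying Lemma~\ref{lem: counting lemma} to these $h$ clusters (internal densities $\approx1/2$ from the previous step, cross densities $\approx\beta$) and summing over embeddings gives $h!\,p_{H,a}(\beta)\,t^h\pm O\big((\varepsilon+\eta+\kappa)t^h\big)$, because an embedding placing a set $A\in\binom{V(H)}{a}$ into ${\cal U}_m$ contributes $(1/2)^{\binom a2+\binom{h-a}2}\beta^{e(A,V\setminus A)}(1-\beta)^{e(V\setminus A,A)}$, and summing over $A$ reconstitutes $\binom ha p_{H,a}(\beta)$ after accounting for the $a!(h-a)!$ embeddings with that $A$. Comparing once more with the partite count yields $p_{H,a}(\beta)=2^{-\binom h2}\pm\mu$ for every $a$ simultaneously; using the reindexing identity $p_{H,a}(1-x)=p_{H,h-a}(x)$ together with the \esimple{} hypothesis, $x=1/2$ is the only common solution of $\{p_{H,a}(x)=2^{-\binom h2}\}_{a=1}^{h-1}$ in $[0,1]$, so by continuity $\beta=\tfrac12\pm\kappa'$. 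As there are only $O(1/\eta)$ colours, for each pair $m\ne m'$ all but $O(\tfrac1\eta\gamma)\ell^2$ of the $\ell^2$ cross pairs are either non-regular or $\varepsilon$-regular of density $\tfrac12\pm\kappa'$.

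Finally I would assemble everything: summing bad cross pairs over all $m\ne m'$, adding the $\le\varepsilon\binom K2$ non-regular pairs of ${\cal R}$ and the $O(\alpha'K)\cdot K$ pairs meeting an uncovered cluster, shows that all but a $\kappa''$-fraction of pairs $(V_i,V_j)$ are $\varepsilon$-regular of density $\tfrac12\pm\kappa'$. Feeding this into Lemma~\ref{lem: counting lemma} for $4$-vertex tournaments (absorbing $O(n^4/K)$ degenerate terms and $O(\varepsilon n^4)$ terms through $V_0$) gives $N^*_T(H')=2^{-6}n^4\pm o(n^4)$ for every $4$-vertex $H'$, i.e.\ $T$ satisfies ${\cal P}_2(4)$, hence is quasirandom by Theorem~\ref{thm: chung-graham}. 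Parameters are chosen in the order: target error $\to\kappa',\kappa'',\eta,\kappa,\gamma,\alpha'\to\ell\to\varepsilon$ (small, and below the threshold needed by Lemma~\ref{lem: Turan-Ramsey decomp into cliques} for $\ell$, $O(1/\eta)$ colours, $\alpha'$) and $m\to M=M(m,\ell,\varepsilon)\to\delta\ll M^{-h}\to n_0$. I expect the cross-clique step to be the main obstacle: turning the crude datum ``${\cal U}_m,{\cal U}_{m'}$ are monochromatic cliques of colour $\approx1/2$'' into a configuration from which $p_{H,a}$ can be read off requires finding, for every frequent cross-density $\beta$, a complete bipartite pattern $K_{a,h-a}$ of $\varepsilon$-regular pairs of density $\approx\beta$ — this is precisely what forces the cliques to be large — and then one must combine the \esimple{} condition, the symmetry of the $p_{H,a}$, and a compactness argument to conclude $\beta\approx1/2$. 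A pervasive secondary subtlety is the quantifier order: only because $K\le M$ is bounded is each cluster a constant fraction of $V(T)$, which is what lets the absolute $o(n^h)$-type error in ${\cal P}^*_2(H)$ be absorbed, and this can only be arranged after $M$ has been fixed.
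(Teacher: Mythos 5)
Your core argument coincides with the paper's: apply the directed regularity lemma, decompose the reduced graph into monochromatic (density-coloured) cliques via the Ramsey--Tur\'an lemma, use the counting lemma inside a clique to force $p_H(\alpha_m)\approx 2^{-\binom h2}$ and hence (by \simple{} plus compactness) internal densities $\approx 1/2$, and use K\H{o}v\'ari--S\'os--Tur\'an on the auxiliary bipartite graph of cross pairs of a given density $\beta$ to force $p_{H,a}(\beta)\approx 2^{-\binom h2}$ for all $a$ and hence (by \esimple{}) $\beta\approx 1/2$. All the computations you sketch (the $a!(h-a)!\sum_A$ reassembly into $h!\,p_{H,a}$, the need for $K_{h,h}\supseteq K_{a,h-a}$, the compactness constants $\zeta,\xi$) match the paper's, and your reindexing identity $p_{H,a}(1-x)=p_{H,h-a}(x)$ correctly transports the \esimple{} hypothesis from $(1/2,1]$ to $[0,1/2)$.

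The genuine difference is the endgame. The paper verifies $\mathcal P_1$ directly: it first cuts the given ordering into $L$ consecutive blocks $U_\ell$, asks the regularity partition to \emph{refine} this block partition, and then only needs a one-sided statement (forward density at most $1/2+O(\theta)$ between cliques lying in different blocks) to bound $|F_{\sigma,T}|$; no appeal to Theorem~\ref{thm: chung-graham} is made. You instead prove the two-sided statement that almost every pair of clusters is $\varepsilon$-regular of density $1/2\pm\kappa'$, read off $\mathcal P_2(4)$ from the counting lemma, and quote the Chung--Graham equivalence. Your route avoids the refinement feature of the regularity lemma but uses $\mathcal P_2(4)\Rightarrow\mathcal P_1$ as a black box; both assemblies are valid. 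The only blemish is the order in which you fix constants: $\eta$ (the density-rounding width, which must beat the error tolerances $\zeta$ and $\xi$ coming from the compactness arguments) should be chosen \emph{after} $\kappa$ and $\zeta$, not before, since $\zeta$ depends on $\kappa$ and the counting-lemma errors $h^2\eta+C_h\varepsilon$ must be below $\zeta$ and $\xi 2^{-\binom h2}$. The dependency graph is acyclic, so this is a bookkeeping fix rather than a gap.
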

\begin{proof} 
Let $H$ be a \simple{} and \esimple{} tournament with $|H| = h$. We are required to show that given $\theta >0$ there is $\delta > 0$ and $n_0$ such that the following holds. Suppose that $T$ is an $n$-vertex tournament with $n \geq n_0$ which satisfies
\begin{equation}
	\label{equation: correct H count}
	N^*_T(H;U) = 2^{-\binom {h}{2}}|U|^h \pm \delta n^h
\end{equation} 
for all $U \subset V(T)$. Then any ordering of $V(T)$ has at most $\frac {1}{2}\binom {n}{2} + \theta n^2$ forward edges. \vspace{1mm}

Let us now give a sketch of the proof before delving into the details. Let $v_1,\ldots , v_n$ be an ordering of $V(T)$. We begin by splitting the vertices into consecutive sets $U_{\ell}$ of almost equal size. Then we apply the regularity lemma (Theorem \ref{thm: digraph regularity}) to refine this partition. %The benefit of refining is that for any 2 clusters belonging to distinct $U_\ell$'s all elements of one always appear before the other. 
Taking the reduced graph we define $\mathcal{R}_\ell$ to be its subgraph consisting of clusters contained in $U_\ell$. We colour all edges of $\mathcal{R}_\ell$ which join clusters with density between them belonging to the same small interval in the colour indexed by this interval. If we split $[0,1]$ into finitely many such intervals we obtain a colouring of $\mathcal{R}_\ell$ to which we can apply Lemma \ref{lem: Turan-Ramsey decomp into cliques} to group most of the clusters inside each $\mathcal{R}_\ell$ into monochromatic cliques. We now show that all edges inside these cliques must have density close to $1/2$. To see this assume the opposite, so that there is a clique $C$ with all edges having density bounded away from $1/2$. Now using Lemma \ref{lem: counting lemma} and $H$ being \simple{} we conclude that there are too few copies of $H$ between the clusters of $C$, compared to what is guaranteed by Proposition \ref{prop: partite_counting}, which holds by \eqref{equation: correct H count}.

We then proceed to upper bound the number of forwards edges of $T$. The main contribution comes from edges between $\varepsilon$-regular pairs of clusters between different $U_\ell$'s. 
To bound this number for a pair of cliques belonging to different $U_\ell$'s and a fixed $d>1/2+\theta$ we build an auxiliary bipartite graph with clusters of the two cliques making the sides of the bipartition and making an edge for any pair of $\varepsilon$-regular clusters which have density roughly $d$ in the forwards direction. We show that this auxiliary graph can not contain $K_{a,h-a}$ for some $a$, as otherwise by using a similar argument as before we get $h$ clusters between which we have a wrong count of the copies of $H$ using the fact $H$ is \esimple. By grouping densities and applying the above reasoning for each group we show that there are few forwards edges between the two cliques. Trivially bounding the remaining contributions we show that there are fewer forwards edges than required, completing the proof.

Before beginning we will fix a number of parameters to be used in the proof. %By decreasing $\theta $ we may assume that $\theta ^2 \leq 2^{-\binom {h}{2}}(4h)^{-1}/4$. 
Let $\xi=\xi(H,\theta)$ be the minimum of the continuous function $f(x)=\max_{a \in [h-1]}(|2^{\binom{h}{2}}p_{H,a}(x)-1|)$ on the interval $[1/2+\theta,1]$. Note that since $H$ is \esimple{} we have $f(x)>0$ for each $x$ in this range so since $f$ is continuous we get $\xi>0$. Set $\eta = \xi 2^{-\binom {h}{2} -2}h^{-2}$. Since $H$ is \simple{} and $p_H(x)$ is continuous there is $\zeta \in (0, \eta )$ with the property that if $x \in [0,1]$ and $p_H(x) \geq 2^{-\binom {h}{2}} - \zeta $ then $x = (1 \pm \eta)/2$. Take $\alpha = \theta /64$, $m =\max ( \lceil 4h^2/\zeta \rceil , \lceil 2/\eta \rceil ),$ $L = \lceil 16\theta ^{-1} \rceil$ and $N_1 = \lceil (2(\theta \eta )^{-1} h)^h\rceil $. Also set $m_{min} = 4Ln_0(m,N_1,\gamma )$ and $\gamma = \gamma (m,N_1,\alpha )$ as in Lemma \ref{lem: Turan-Ramsey decomp into cliques}. With $C_h$ as in Lemma \ref{lem: counting lemma}, take $\varepsilon >0$ so that 
	\begin{equation*}
		\varepsilon 
			= 
		\min \Big ( \frac{1}{4L} , \frac {\gamma }{8L^2}, 
		\frac {\zeta }{4C_h}, \frac{\theta ^3}{32} \Big ).
	\end{equation*}
Lastly, set $n_0 = M = \max(M(m_{min}, L, \varepsilon ),8L)$ as in Theorem \ref{thm: digraph regularity} and $\delta = \zeta (4M)^{-h} /2$.
\vspace{1.5mm}

To begin the proof set $U_{\ell } = \{v_i \in V(T): i\in [(\ell -1)n/L, \ell n/L)\}$ for all $\ell \in [L]$.  Note that $|U_\ell| \ge n/L-2$. Provided $n\geq n_0$, we may apply the Theorem \ref{thm: digraph regularity} to $T$ to obtain an $\varepsilon $-regular partition $\{V_{k }\}_{{k} \in [K]} \cup \{V_0\}$ refining $\{U_{\ell }\}_{\ell \in [L]}$, with $m_{min} \leq K \leq M$. Let ${\cal R}$ denote the reduced graph of this partition and ${\cal R}_{\ell }$ denote the subgraph of ${\cal R}$ consisting of the clusters contained in $U_{\ell }$. Setting $W_{\ell }=|{\cal R}_{\ell }|$, we have $W_{\ell } \geq \frac{K}{2L}$ for all $\ell \in [L]$. Indeed, since $|V_{k}| = |V_{k'}|$ for all $k,k' \in [K]$ we have 
	\begin{equation*}
		n 
			\geq 
		\sum _{k \in [K]} |V_{k}| 
			= 
		\frac {K}{W_{\ell}} \sum _{V_{k} \in V({\cal R}_{\ell })} |V_{k }| 
			\geq 
		\frac {K}{W_{\ell }} \Big ( |U_{\ell }| - |V_0| \Big ) 
			\geq 
		\frac {K}{W_{\ell }} \times \frac {n}{2L}, 
\end{equation*}
using $|U_{\ell }| - |V_0| \geq (n/L - 2) -\varepsilon n \geq \frac{n}{2L}$. Rearranging, we find $W_{\ell } \geq \frac{K}{2L}$ for all $\ell \in [L]$. \vspace{3mm}

\noindent \textbf{Claim:} Each ${\cal R}_{\ell}$ contains a collection of vertex disjoint cliques ${\cal C}_{\ell }$ with the following properties: 
	\begin{enumerate}[(i)]
		\item Each clique $C \in {\cal C}_{\ell }$ has order $N_1$,
		\item $|{\cal R}_{\ell } \setminus ( \cup _{C \in {\cal C}_{\ell }} C )| \leq \alpha W_{\ell }$,
		\item $d(V_{k},V_{k '}) = (1/2 \pm \eta )|V_{k}|
		|V_{k '}|$ 
		for each edge $V_{k}V_{k '}$ in a clique $C \in {\cal C}_{\ell }$.
	\end{enumerate}
\vspace{3.5mm} 

To prove the claim, colour the edges of ${\cal R}_{\ell }$ 
with $m$ colours, where each pair $V_kV_{k'}$ with $k < k'$ gets color $j\in [m-1]$ if $d(V_{k}, V_{k'}) \in j/m \pm 1/m$ (ties broken arbitrarily). The graph ${\cal R}_{\ell }$ contains at least 
$\binom {W_{\ell }}{2} - \varepsilon \binom {K}{2} > (1 - \gamma ) \binom {W_{\ell }}{2}$ edges, since $W_{\ell } \geq \frac{K}{2L}$, $\frac{\gamma}{8L^2} \geq \varepsilon$ and $K \ge m_{min} \ge 4L$. Therefore, from Lemma \ref{lem: Turan-Ramsey decomp into cliques}, since $|{\cal R}_{\ell }| \geq \frac{K}{2L} \geq \frac{m_{min}}{2L} \geq n_0(m, N_1,\alpha )$, the graph ${\cal R}_{\ell }$ contains a collection ${\cal C}_{\ell }$ of vertex disjoint monochromatic cliques which satisfy parts (i) and (ii) from the claim.\vspace{1mm}

It remains to show that part (iii) holds. Let $C \in {\cal C}_{\ell }$ be monochromatic with colour $j$ and $V_{k_1},\ldots ,V_{k_h} \in C$ with $k_1< ...< k_h$. As each pair $(V_{k}, V_{k'})$ in $C$ is $\varepsilon $-regular with $d(V_{k_i},V_{k_{i'}}) = (j\pm 1)/m$, by Lemma \ref{lem: counting lemma} we have
	\begin{align*}
		N^*_T(H; V_{{k}_1}, V_{{k}_2},\ldots ,V_{{k}_h}) 
			&= 
		\sum _{\sigma \in S_h} 
		\bigg ( \Big (\frac {j \pm 1}{m}\Big ) ^{|F_{\sigma ,H}|} 
		\Big (\frac {m-j \pm 1}{m} \Big )^{|B_{\sigma ,H}|} \pm C_h\varepsilon 
		\bigg ) \prod _{i \in [h]} |V_{k_{i}}|\\ 
			& = 
		h!\big ( p_H(j/m) \pm (C_h \varepsilon+ h^2m^{-1}) \big ) 
		\prod _{i \in [h]} |V_{k_{i}}| \\
			&= 
		h!\big ( p_H(j /m) \pm \zeta /2 \big ) 
		\prod _{i \in [h]} |V_{k_{i}}|.
	\end{align*} 
In the second equality here we used repeatedly the fact that for any $x,y,t \in [0,1]$ such that $x+t \le 1$ we have $(x+t)(y+t) \le (x+t)y+t \le xy+2t$. 
On the other hand, from \eqref{equation: correct H count} and Proposition \ref{prop: partite_counting} we also have
	\begin{equation}
		\label{equation: partite bound}
		N^*_T(H; V_{k_1}, V_{k_2},\ldots ,V_{k_h}) 
			=  
		h!2^{-\binom {h}{2}} \prod _{i \in [h]} |V_{{k}_{i}}| \pm 2^h\delta n^h 
			=  
		h! \big ( 2^{-\binom {h}{2}}  \pm \zeta /2 \big ) 
		\prod _{i \in [h]} |V_{k _{i}}|.
	\end{equation}
Here we have used that all clusters $V_{k}$ satisfy $|V_{k}| \geq (n-|V_0|)/K \geq n/2M$ and $\delta (4M)^h = \zeta /2$. Combined, these bounds give 
$p_H(j/m) = 2^{-\binom {h}{2}} \pm \zeta$. By our choice of $\zeta $ this forces $j/m = (1 \pm \eta )/2 $ and so $d(V_{k}, V_{{k}'}) = (1\pm \eta )/2 \pm m^{-1} = 1/2 \pm \eta $ for any pair $V_kV_{k'}$ contained in a clique $C \in {\cal C}_{\ell }$ giving (iii). This completes the proof of the claim. \vspace{3mm}

We can now proceed to prove an upper bound on the number of forward edges of $T$. To do this, first fix $1 \leq \ell < \ell ' \leq L$ and $C \in {\cal C}_{\ell }$ and $C' \in \mathcal{C}_{\ell'}$. We will prove that 
	\begin{equation}
		\label{equation: clique control}
		\sum _{\substack{V_k \in C,V_{k'}\in C':\\ (V_k,V_{k'}) \mbox{ } \varepsilon -\mbox{reg}}} |E(V_k, V_{k'})| 
		\leq \Big (\frac {1}{2} + \frac{3 \theta }{2} \Big )\Big | \bigcup _{V_k \in C} V_k \Big | \Big | \bigcup _{V_{k'} \in C'} V_k \Big |.
	\end{equation}
To see this fix some $d > 1/2 + \theta $ and consider the auxilliary bipartite graph $G$ with vertex set on one side being $C$ and on the other side $C'$. We put an edge between clusters $V_k \in C$ and $V_{k'} \in C'$ if the pair $(V_k,V_{k'})$ is $\varepsilon$-regular and has density $d(V_{k}, V_{k'}) = d \pm \eta$. Let $a$ be such that $|2^{\binom{h}{2}}p_{H,a}(d)-1|\ge  \xi$, which exists since $d> 1/2+\theta$. Our goal is to show that $G$ is $K_{a,h-a}$-free, which by K\H{o}v\'ari-S\'os-Tur\'an theorem is going to tell us that $G$ is sparse, allowing us to bound the number of forwards edges of $T$ between $C$ and $C'$. 

To see this assume towards a contradiction that $G$ contains a $K_{a,h-a}$. Let $V_{k_1},\ldots, V_{k_a}$ make one side and $V_{k_{a+1}},\ldots, V_{k_h}$ the other of this $K_{a,h-a}$. By part (iii) of the 
claim, all pairs $(V_{k_i},V_{k_{i'}})$ are $\varepsilon $-regular with $d(V_{{k}_i},V_{k _{i'}}) = 1/2 \pm \eta $ for distinct $i,i'\in [a]$ or distinct $i,i' \in [a+1,h]$. Any $\phi \in \mbox{Emb}_T(H;V_{k_1},\ldots ,V_{k_h})$ embeds $\binom{a}{2}+\binom {h-a}{2}$ edges of $H$ into pairs with density $1/2 \pm \eta $. If $A:=\phi^{-1}(\{V_{k_1},\ldots, V_{k_a}\})$ then $e(A,V\setminus A)$ edges of $H$ get embedded into pairs with density $d \pm \eta $, and $e(V \setminus A,A)$ edges into pairs with density $1- d \pm \eta$. So Lemma \ref{lem: counting lemma} gives
{\footnotesize
	\begin{align}
		\frac{N^*_T(H; V_{k_1},\ldots ,V_{k_{h}}) }{\prod _{i \in [h]} |V_{{k}_i}|}
			& = 
		a!(h-a)!\sum_{A \in \binom{V(H)}{a}} \left (  \left (\frac12 \pm \eta  \right)^{\binom {a}{2}+\binom {h-a}{2}} 
		 ( d \pm \eta  )^{e(A,V\setminus A)} 
		 ( 1 - d \pm \eta  )^{e(V\setminus A,A)} 
		\pm C_h \varepsilon  \right)
		\nonumber \\
			& = 
		h! \binom{h}{a}^{-1}
		\sum_{A \in \binom{V(H)}{a}}\left( 2^{-\binom{a}{2}-\binom{h-a}{2}}d ^{e(A,V\setminus A)}(1-d)^{e(V\setminus A,A)} 
		\pm (h^2\eta + C_h \varepsilon ) \right)\nonumber \\
		&= 
		h! (p_{H,a}(d)
		\pm (h^2\eta + C_h \varepsilon )  )
		. \label{eqn: homom count}
	\end{align}	
}
As $(h^2 \eta + C_h \varepsilon )+ \zeta \leq 
h^2 \eta + 2\zeta \leq 3h^2 \eta < \xi 2^{-\binom {h}{2}}$ this gives
\begin{align*}
		\left| \frac {N^*_T(H; V_{k_1},\ldots ,V_{k_{h}}) }
		{\prod _{i \in [h]} |V_{{k}_i}|h!}-2^{-\binom {h}{2}}\right|
			& \geq \zeta,
	\end{align*}
which contradicts \eqref{equation: partite bound}. Thus, there is no $K_{a,h-a}$ in $G$. The K\H{o}v\'ari-S\'os-Tur\'an theorem (see \cite{kst}) now tells us that $G$ has at most $hN_1^{2-1/h}$ edges (recall that $|C|=|C'|=N_1$.) Since $d$ was arbitrary (provided it is bigger than $1/2+\theta$) by splitting the interval $[1/2+\theta,1]$ into at most $\eta^{-1}$ subintervals of width at least $2\eta$ and building a graph as above for each of these subintervals with $d$ being equal to the center of the interval we obtain that there are at most $\eta ^{-1}hN_1^{2-1/h}$ pairs $V_{k} \in C, V_{k'}\in C'$ such that $(V_k,V_{k'})$ is $\varepsilon $-regular and $d(V_{k }, V_{k'}) > 1/2 + \theta $. As $|C| = N_1 \ge (2(\eta \theta )^{-1}h)^h$ this gives
	\begin{equation*}
		\sum _{\substack{V_k \in C, V_{k'}\in C':\\ (V_k,V_{k'}) \mbox{ } \varepsilon -reg}} |E(V_k, V_{k'})| 
		\leq \Big (\frac{1}{2} + \theta +\eta^{-1}hN_1^{-1/h}\Big )\Big | \bigcup _{V_k \in C} V_k \Big | \Big | \bigcup _{V_{k'} \in C'} V_{k'}\Big |
		\leq \Big (\frac {1}{2} + \frac{3 \theta }{2} \Big )\Big | \bigcup _{V_k \in C} V_k\Big | \Big | \bigcup _{V_{k'} \in C'} V_{k'}\Big |
	\end{equation*}
i.e. \eqref{equation: clique control} holds.

We can now complete the proof, upper bounding the number of forward edges of $T$. The $\varepsilon $-regular pairs $(V_{k}, V_{k'})$ with $V_{k} \in C \in {\cal C}_{\ell }$ and $V_{k'} \in C' \in \mathcal{C}_{\ell'}$ for some $\ell < \ell '$ contribute at most $\big (1/2 + 3\theta /2\big ) \binom {n}{2}$ forward edges to $T$ by \eqref{equation: clique control}. The remaining forward edges either (a) lie entirely in some set $U_{\ell }$, (b) contain a vertex from cluster $V_{k} \notin \bigcup _{\ell } \bigcup _{C \in {\cal C}_{\ell }} C$, (c) contain a vertex in $V_0$, or (d) lie between pairs $(V_{k},V_{k'})$ which are not $\varepsilon $-regular. The number of such edges is at most
	\begin{equation*}
		\sum _{\ell \in [L]}\binom {|U_{\ell }|}{2}  + 
		\sum _{\ell \in [L]} |{\cal R}_{\ell }\setminus (\cup _{C \in {\cal C}_{\ell }} C)| \bigg ( \frac{n^2}{K} \bigg )
		+ |V_0|n 
		+ \varepsilon \binom {K}{2} \bigg ( \frac {n}{K} \bigg )^2 
			\leq 
		\left (\frac{1}{2L} + 2\alpha + 3\varepsilon 
		+ \varepsilon \right ) \binom {n}{2} \leq \frac {\theta }{2} 
		\binom {n}{2}.
	\end{equation*}
Here we have used that $|{\cal R}_{\ell }\setminus (\cup _{C \in {\cal C}_{\ell }} C)| \leq \alpha W_{\ell } \leq 2\alpha K/L$ by part (ii) of the claim, that $L \geq 16/\theta $, $64\alpha = \theta $ and $16 \varepsilon \leq \theta $. Combined, these estimates show that $T$ has at most $\frac{1}{2}\binom {n}{2} + \theta n^2$ forward edges, as required.
\end{proof}

\section{Concluding remarks and open problems}
	We have shown that a large tournament $H$ is forcing 
	if and only if $H$ is transitive. Our main focus was on the stronger 
	property of being locally forcing. We proved that while many 
	tournaments do not satisfy this property 
	(Lemma \ref{lemma: properties of counting poly}) it
	does hold for many tournaments which we draw from a certain random distribution on nearly regular tournaments. The most natural model of random regular tournaments is to take a uniform distribution over all regular tournaments. We believe that in fact this also gives w.h.p.\ a \simple{} and hence (by Lemma \ref{lem: nearly regular is esimple} and Theorem \ref{thm:-simple and esimple iff locally forcing}) a locally forcing tournament. 
	
	Another result in this paper shows that a tournament is locally forcing if and only if it is \simple{} and \esimple. This in some sense says that in order to check whether a tournament $H$ is locally forcing one only needs to check whether models $\mathcal{T}(n,\alpha )$ or $\mathcal{T}(n,n,\alpha )$ can have the same local counts of $H$ as $\mathcal{T}(n,1/2)$ for some $\alpha \neq 1/2$.

	We actually believe that the \esimple{} assumption may be dropped entirely in Theorem \ref{thm:-simple and esimple iff locally forcing} either because it is implied by \simple{} or because it is satisfied by every tournament. In other words it would be interesting to determine if every \simple{} tournament is locally forcing. We reduced this question to the following problem about degree counting polynomials. Does there exist an $h$-vertex tournament $H$ such that the rescaled and recentered $a$-th order degree counting polynomials defined as
	$$q_{H,a}(x):=\binom{h}{a}^{-1}\sum_{A \in \binom{V(H)}{a}} (1+x)^{e(A,V \setminus A)}(1-x)^{e(V \setminus A,A)}-1$$
	have a common root in $(0,1]$, for all $1 \le a \le h-1$. We note that in a certain sense this is the correct tournament analogue of the Simonovits-S\'os conjecture from the induced graph case, discussed in the introduction.
	
	Our arguments rely on the regularity lemma. It is possible that one can find a nice class of tournaments for which one can show the locally forcing property directly, avoiding the use of the regularity lemma. For example, the tournament $Tr_3$ is not globally forcing but is locally forcing and this is not hard to see directly. Indeed, any tournament $T$ has at most $\frac12 \sum_{v \in T}\left( \binom{d^+(v)}{2}+\binom{d^-(v)}{2}\right)=\frac14 \sum_{v \in T} \frac{1}{2}\left((d^+(v)+d^-(v))^2+(d^+(v)-d^-(v))^2\right)-(n-1)=\frac18 n^3 (1+o(1))+\frac18 \sum_{v \in T} (d^+(v)-d^-(v))^2$ copies of $Tr_3$. So in order for the local counts to match that of the random tournament, by Cauchy-Schwarz, we must have $n^{-1}\big (\sum_{v \in U} |d^+_{T[U]}(v)-d^-_{T[U]}(v)|\big )^2 \leq \sum_{v \in U} (d^+_{T[U]}(v)-d^-_{T[U]}(v))^2=o(n^3)$ for any $U \subseteq V(T)$. This gives condition $\mathcal{P}_5$ from Chung and Graham \cite{CG} and implies $T$ must be quasirandom. Thus $Tr_3$ is locally forcing. Since any three vertices of a tournament either induce $Tr_3$ or $C_3$, if any subset of $V(T)$ has the correct count of $Tr_3$ then it has the correct count of $C_3$ as well. Thus $C_3$ is also locally forcing.
	
	We note that deciding whether a fixed tournament is \simple{} is a matter of counting how many zeros of the counting polynomial (minus a constant) one can find in the interval $[1/2,1]$. Since counting polynomial is of degree at most $\binom{h}{2}$ this can be done by Sturm's algorithm in polynomial time. Deciding whether a fixed tournament is \esimple{} can be done in a similar fashion, we first find the greatest common divisor of our degree counting polynomials and then find the number of roots of this greatest common divisor in $[1/2,1].$ While all of this can be done in polynomial time it is not clear how to compute the coefficients of our polynomials in polynomial time since they are defined in terms of vertex orderings of which there are $h!$ or subsets, in which case there are potentially as many as $\binom{h}{h/2}$. It could be interesting to determine if these terms could be calculated in polynomial time, as this would give a polynomial time algorithm for deciding whether a given tournament is locally forcing or not.
	
	Finally, it was brought to our attention that Fox, Himwich and Mani \cite{FHM} independently studied related problems in general directed graphs including certain forcing problems in this setting.

\section*{Acknowledgements}

We would like to thank Jan Volec for checking many small cases using a computer and Igor Balla for drawing our attention to \cite{AGK}.

\end{document}